\theoremstyle{plain}  \newtheorem{theorem}{Theorem}[section]
\newtheorem{lemma}[theorem]{Lemma}
\newtheorem{proposition}[theorem]{Proposition}
\newtheorem{definition}[theorem]{Definition} \theoremstyle{remark}
\newtheorem{remark}[theorem]{Remark}
\newcommand{\lsim}{  \lesssim   }
\newcommand{\de}{  \delta   }
\renewcommand{\(}{   \left[\hspace{-1ex}\left[\hspace{0.5ex}  }
\renewcommand{\)}{  \hspace{0.5ex} \right]\hspace{-1ex}\right] }
\newcommand{\R}{  \mathbb{R}   }
\newcommand{\eps}{\varepsilon}
\newcommand{\C}{  \mathbb{C}   }
\newcommand{\Z}{  \mathbb{Z}   }
\newcommand{\N}{  \mathbb{N}   }
\newcommand{\A}{  \mathcal{A}   }
\newcommand{\M}{  \mathcal{M}   }
\newcommand{\Ca}{  \mathcal{C}   }
\newcommand{\E}{  \mathcal{E}   }
\newcommand{\NF}{  \mathcal{NF}   }
\renewcommand{\O}{  \mathcal{O}   }
\newcommand{\T}{  \mathbb{T}   }
\newcommand{\Tc}{  \mathcal{T}   }
\newcommand{\D}{  \mathcal{D}   }
\newcommand{\Cc}{  \mathcal{C}   }
\newcommand{\dd}{  \text{d}   }
\newcommand{\om}{  \omega   }
\renewcommand{\a}{  \alpha   }
\newcommand{\p}{  \partial   }
\renewcommand{\b}{  \beta   }
\renewcommand{\P}{  \mathcal{P}   }
\newcommand{\ga}{\gamma   }
\newcommand{\s}{  \sigma   }
\newcommand{\lan}{  \langle  }
\newcommand{\ran}{  \rangle  }
\newcommand{\ka}{  \kappa   }
\renewcommand{\r}{  \rho   }
\newcommand{\la}{  \lambda_a   }
\newcommand{\lb}{  \lambda_b   }
\newcommand{\zz}{  \mathfrak z  }
\renewcommand{\phi}{  \varphi  }
\renewcommand{\L}{  \mathcal{L}   }
\newcommand{\Lc}{  \hat{\mathcal{L}  } }
\renewcommand{\S}{  \mathbb{S}   }
\newcommand{\nazz}{ (\nabla_\rho\cdot {\mathfrak z} ) }
\newcommand{\diag}{\operatorname{diag}}
\newcommand{\meas}{\operatorname{meas}}
\newcommand{\card}{\operatorname{card}}
\newcommand{\Span}{\operatorname{Span}}
\newcommand{\be}{\begin{equation}}
\newcommand{\ee}{\end{equation}}
\newcommand{\ben}{\begin{equation*}}
\newcommand{\een}{\end{equation*}}
\newcommand{\ban}{\begin{align*}}
\newcommand{\ean}{\end{align*}}
\numberwithin{equation}{section}
\def\norma#1{\left\| #1\right\|}
\newcommand{\msb}{ \mathcal M_{s,\beta} }
\newcommand{\ff}{ [f]^{s}_{\s,\mu,\D} }
\newcommand{\fb}{ [f]^{s,\b}_{\s,\mu,\D} }
\newcommand{\fbp}{ [f]^{s,\b+}_{\s,\mu,\D} }
\newcommand{\Tbp}{\mathcal{T}^{s,\b+}(\s,\mu,\D)}
\newcommand{\Tb}{\mathcal{T}^{s,\b}(\s,\mu,\D)}
\newcommand{\hh}{ [h]^{s}_{\s,\mu,\D} }
\newcommand{\hhh}{ [h]^{s,\b}_{\s,\mu,\D} }
 \author{ Beno\^it Gr\'ebert}
\address{Laboratoire de Math\'ematiques Jean Leray, Universit\'e de Nantes, UMR CNRS 6629\\
2, rue de la Houssini\`ere \\
44322 Nantes Cedex 03, France}
\email{benoit.grebert@univ-nantes.fr}
\author{Eric Paturel}
\address{Laboratoire de Math\'ematiques Jean Leray, Universit\'e de Nantes, UMR CNRS 6629\\
2, rue de la Houssini\`ere \\
44322 Nantes Cedex 03, France}
\email{eric.paturel@univ-nantes.fr}
\title{KAM for the Klein Gordon equation on $\S^d$.}
\begin{document}

\begin{abstract}
Recently the KAM theory has been extended to multidimensional PDEs. Nevertheless all these recent results   concern PDEs on the torus, essentially because in that case the corresponding linear PDE is diagonalized in the Fourier basis and the structure of the resonant sets is quite simple. In the present paper,  we consider an important physical example that do not fit in this context:  the Klein Gordon equation on $\S^d$. Our abstract KAM theorem also allow to prove the reducibility of the corresponding linear operator with time quasiperiodic potentials.
  \end{abstract}
  

\subjclass{ }
\keywords{KAM theory, Regularizing PDE, Klein Gordon equation.}
\thanks{
}

\maketitle
\tableofcontents
\section{Introduction.}

If the KAM theorem is now well documented for nonlinear Hamiltonian PDEs in 1-dimensional context (see \cite{kuk87,Kuk2,pos89}) only few results exist for multidimensional PDEs.

Existence of quasi-periodic solutions of space-multidimensional  PDE were first proved in \cite{B1} (see also 
  \cite{B2}) but with a technique based on the Nash-Moser thorem that does not allow to analyze the linear stability of the obtained solutions.
 Some KAM-theorems for small-amplitude solutions of multidimensional beam equations (see \eqref{beam}   above)
  with typical $m$   were obtained in
  \cite{GY05, GY06}. Both works treat equations  with a constant-coefficient nonlinearity 
  $g(x,u)=g(u)$, which  is significantly easier than the general case. 
  The first complete KAM theorem for space-multidimensional PDE was obtained in \cite{EK10}. Also see \cite{Ber1, Ber2}. \\ 
  The techniques  developed by Eliasson-Kuksin have been improved in \cite{EGK1,EGK2} to allow a KAM result without external parameters.  In these two papers the authors prove the existence of small amplitude quasi-periodic solutions of the beam equation on the $d$-dimensional torus. They further investigate  the stability of these solutions and give explicit examples where the solution is linearly unstable and thus exhibits hyperbolic features (a sort of whiskered torus).\\
All these examples concern PDEs on the torus, essentially because in that case the corresponding linear PDE is diagonalized in the Fourier basis and the structure of the resonant sets is  the same for NLS, NLW or beam equation. In the present paper, adapting the technics in \cite{EK10}, we consider an important example that do not fit in the Fourier analysis: the Klein Gordon equation on the sphere $\mathbb S^d$ .\\
Notice that  existence of quasi-periodic solutions for NLW and NLS on compact Lie groups via  Nash Moser technics (and  without linear stability) has been proved recently in \cite{BP,BCP}.

\medskip

To understand the new difficulties, let us start with a brief overview of the method developed in \cite{EK10}.  Consider the nonlinear Schr\"odinger equation on $\T^d$
$$iu_t=-\Delta u + \text{nonlinear terms}, \quad x\in \T^d,\ t\in\R.$$
In Fourier variables it reads\footnote{The space $\Z^d$ is equipped with standard euclidian norm: $|k|^2=k_1^2+\cdots k_d^2$.}
$$i \, \dot u_k= |k|^2 u_k +\text{nonlinear terms},\quad k\in\Z^d.$$
So two Fourier modes indexed by $k,j\in\Z^d$ are (linearly) resonant when $|k|^2=|j|^2$. For the beam equation on the torus, the resonance relation is the same. The resonant sets $\E_k=\{j\in\Z^d\mid |j|^2=|k|^2\}$ define a  natural clustering of $\Z^d$. All the modes in the block $\E_k$ have the same energy,  and we can expect that the interactions between different blocks are small, but the interactions inside a block could be of order one.
With this idea in mind, the principal step of the KAM technique, i.e. the resolution of the so called homological equation, leads to the inversion of an infinite matrix which is block-diagonal with respect to this clustering. 
 It turns out that these blocks have cardinality growing with $|k|$ making harder the control of the inverse of this matrix. As a consequence we lose regularity each time we solve the homological equation. Of course, this is not acceptable for an infinite induction. The very nice idea in \cite{EK10} consists in considering a sub-clustering constructed as the equivalence classes of the equivalence relation on $\Z^d$ generated by the pre-equivalence relation
$$ a\sim b \Longleftrightarrow \left\{\begin{array}{l} |a|=|b| \\   {|a-b|}
 \leq \Delta \end{array}\right.$$
Let $[a]_\Delta$ denote the equivalence class of $a$. 
The crucial fact (proved in \cite{EK10}) is that the blocks are {\it finite}
with a maximal ``diameter''
$$\max_{[a]_\Delta=[b]_\Delta} |a-b|\leq C_d \Delta^{\frac{(d+1)!}2} $$
depending only on $\Delta$. With such a clustering, we do not lose regularity when we solve the homological equation. 
Furthermore, working in a phase space of analytic functions $u$ or equivalently, exponentially decreasing Fourier coefficients $u_k$, it turns out that the homological equation is "almost" block diagonal relatively to this clustering. 
Then we let the parameter $\Delta$ grow at each step of the KAM iteration. 

\medskip

Unfortunately, this estimate of the diameter of a block $[a]_\Delta$ by a constant independent of $|a|$ is a sort of miracle that does not persist in other cases. For instance if we consider the nonlinear Klein Gordon equation on the sphere  $\S^2$,
$$(\partial_{t}^2-\Delta+m)u=\text{nonlinear terms},\quad t\in\R,\ x\in\S^2$$
then the linear part diagonalizes in the harmonic basis $\Psi_{j,\ell}$ (see Section 3) and the natural clustering is given by
the resonant sets $\{(j,\ell)\in\N^2\mid \ell=-j,\cdots,j \}$. We can easily convince ourself that there is no simple construction of a sub-clustering compatible with the equation, in such a way that the size of the blocks does no more depend on the energy.\\
So we have to invent a new way to proceed. First we  consider a phase space $Y_s$ with polynomial decay on the Fourier coefficient (corresponding to Sobolev regularity for $u$) instead of exponential decay and we use a different norm on the Hessian matrix that takes into account the polynomial decrease of the off-diagonal blocks:
\be\label{star}
|M|_{\b,s}=\sup_{j,k\in\N}{\|M_{[k]}^{[j]}\|}(kj)^\beta \left(\frac{\min(j,k)+|j^2-k^2|}{\min(j,k)}\right)^{s/2}\ee
where $[j]=\{(n,m)\in\N^2\mid n+m= j \}$ is the block of energy $j$, $M_{[k]}^{[j]}$ is the interaction matrix $M$ reduced to the eigenspace of energy $j$ and of energy $k$, and $\|\cdot\|$ is the operator norm in $\ell^2$. This norm was suggested by our study of the Birkhoff normal form in \cite{BDGS} and \cite{GIP}.\\
 This technical changes make disappear the loss of regularity in the resolution of the homological equation. Nevertheless this is not the end of the story, since this Sobolev structure of the phase space $\Tc^{s,\b}$ (see Section \ref{2}) is not stable by Poisson bracket and thus is not adapted to an iterative scheme. So the second ingredient consists in a trick previously used in \cite{GT}: we take advantage of the regularizing effect of the homological equation to obtain a solution in a slightly more regular space $\Tc^{s,\b+}$ and then we verify that $\{\Tc^{s,\b},\Tc^{s,\b+}\}\in \Tc^{s,\b}$ (see Section 4) which enables an iterative procedure. The last problem is to check that the non linear term, say $P$, belongs to the class $\Tc^{s,\b}$ which imposes a decreasing condition on the operator norm of the blocks of the Hessian of $P$. It turns out that this condition is satisfied for the Klein Gordon equation  on spheres (and also on Zoll manifold, see Remark \ref{rem-Zoll}). A similar condition  is also satisfied for the quantum harmonic oscillator  on $\R^d$
 $$i\, u_t=-\Delta u +|x|^2u+\text{nonlinear terms}, \quad x\in\R^d.$$
 But unfortunately, in order to belong in the class $\Tc^{s,\b}$, the gradient of the nonlinear term has to be regularizing, a fact that is not true for the quantum harmonic oscillator, and thus our KAM theorem does not apply in this case. Nevertheless, this last condition is not required when $P$ is quadratic and thus this method allows to obtain a reducibility result for the quantum harmonic oscillator with time quasi periodic potential. This is detailed in our forthcoming paper \cite{GP2}. \\
 In this paper we only consider PDEs with external parameters (similar to a convolution potential in the case of NLS on the torus). Following \cite{EGK2} we could expect to remove these external parameters (and to use only internal parameters) but the technical cost would be very high.

\medskip

We now state our result for the Klein Gordon equation on the sphere.
Denote by $\Delta$ the Laplace-Beltrami
operator on the sphere $\mathbb S^d$, $m>0$ and let $\Lambda_0=(-\Delta +m)^{1/2}$.   
The spectrum of $\Lambda_0$ equals
$\{\sqrt{j(j+d-1)+m}\mid\ j\geq 0\}.$
For each $j\geq 1$ let $E_j$ be the associated eigenspace, its dimension is $ d_j=O(j^{d-1})$. We denote by $\Psi_{j,l}$  the  harmonic function of degree $j$ and order $\ell$ so that we have
$$E_j=\Span\{\Psi_{j,l},\  l=1,\cdots,d_j\}.$$
We denote
$$\E:=\{(j,\ell)\in\N\times\Z\mid j\geq 0\text{ and }\ell=1,\cdots,d_j\}$$
in such a way that $\{\Psi_{a},\, a\in\E\}$ is a basis of $L^2(\S^d, \C)$.\\
We introduce the harmonic multiplier $M_\r$
defined on the basis $(\Psi_a)_{a\in\E}$ of $L^2(\S^d)$ by
\be\label{MKG}
M_\r \Psi_a=\r_a\Psi_a\quad \text{ for } a\in\E
\ee
where $(\r_a)_{a\in\E}$ is a bounded sequence of  nonnegative real numbers.

Let $g$ be a real analytic function on $\mathbb S^d\times \R$  such that $g$ vanishes at
least at order 2 in the second variable at the origin. We consider the following 
 nonlinear Klein Gordon equation 
\be
\label{KG} 
(\partial_{t}^2-\Delta+m+\de M_\r)u+\eps g(x,u)=0,\quad t\in\R,\ x\in\S^d 
\ee
where $\de>0$ and $\eps>0$ are   small parameters.\\
Introducing $\Lambda=(-\Delta +m+\de M_\r)^{1/2}$ and $v=-u_t\equiv - \dot u$, \eqref{KG} reads
\ben \left\{\begin{array}{ll}
 \dot u &= - 
 v,\\
 \dot v &=\Lambda^2 u    +\eps g(x,u).
\end{array}\right. 
\een
Defining 
 $
 \psi =\frac 1{\sqrt 2}(\Lambda^{1/2}u  + i\Lambda^{-1/2}v) $
 we get
$$
\frac 1 i \dot \psi =\Lambda \psi+ \frac{\eps}{\sqrt 2}\Lambda^{-1/2}g\left(x,\Lambda^{-1/2}\left(\frac{\psi+
\bar\psi}{\sqrt 2}\right)\right)\,.
$$
Thus, if we endow the space   $L^2(\S^d, \C)$ with the standard  real symplectic structure given by the two-form
$\ 
-id\psi\wedge d\bar \psi $
 then equation 
 \eqref{KG} becomes a Hamiltonian system 
$$\dot \psi=i\frac{\partial H}{\partial \bar\psi}$$
with the hamiltonian function
$$
H(\psi,\bar\psi)=\int_{\S^d}(\Lambda \psi)\bar\psi \dd x +\eps\int_{\S^d}G\left(x,\Lambda^{-1/2}\left(\frac{\psi+\bar\psi}{\sqrt 2}\right)\right)\dd x.
$$
where $G$ is a primitive of $g$ with respect to the variable $u$: $g=\partial_u G$.\\
The linear operator $\Lambda$ is diagonal in the  basis  $\{\Psi_{a}, \, a\in\E\}$:
$$
\Lambda \Psi_{a}=\la \Psi_a,\;\;\la= \sqrt{w_a(w_a+d-1)+m+\de\r_a},
\qquad  \forall\,a\in\E
$$
 where we set
$$w_{(j,\ell)}=j\quad \forall\,(j,\ell)\in\E.$$
Let us decompose $\psi$ and $\bar\psi$  in the   basis $\{\Psi_a, \, a\in\E\}$:
$$
\psi=\sum_{a\in\E}\xi_a \Psi_a,\quad \bar\psi=\sum_{a\in\E}\eta_a \Psi_{a}\,.
$$
On $\P_\C:=\ell^2(\E,\C)\times\ell^2(\E,\C)$ endowed with the  complex 
symplectic structure
${ -}i\sum_s \dd\xi_s\wedge\dd\eta_s$ we consider the Hamiltonian system
\be \label{KG2} \left\{\begin{array}{ll}\dot \xi_a&=i\frac{\partial H}{\partial \eta_a}\\ \dot \eta_a&=-i\frac{\partial H}{\partial \xi_a}\end{array}\right. \quad a\in\E\ee
where the Hamiltonian function $H$ is given by
\be\label{HKG}H=\sum_{a\in\E}\la \xi_a\eta_a+\eps\int_{\S^d}G\left(x,\sum_{a\in\E}\frac{(\xi_a+\eta_a)\Psi_a}{\sqrt{ 2}\ \la^{1/2}}\right)\dd x.\ee
The Klein Gordon equation \eqref{KG} is then  equivalent to the  Hamiltonian system \eqref{KG2} restricted to the real subspace 
$$\P_\R:=\{(\xi,\eta)\in \ell^2(\E,\C)\times\ell^2(\E,\C)\mid \eta_a=\bar\xi_a, \ a\in\E\}.$$
\begin{definition}
Let $\A\subset \E$ a finite subset of cardinal $n$. This set is {\em admissible} if and only if
\be\label{admissible}\A\ni(j_1,\ell_1)\neq(j_2,\ell_2)\in\A \Rightarrow j_1\neq j_2.\ee
\end{definition}
We fix $I_a\in[1,2]$ for $a\in\A$, the initial $n$ actions, and we write the modes $\A$ in action-angle variables:
$$\xi_a=\sqrt{I_a+r_a}e^{i\theta_a},\quad \eta_a=\sqrt{I_a+r_a}e^{-i\theta_a}.$$
We define $\L=\E\setminus\A$ 
and, to simplify the presentation, we assume  that
$$\r_{j,l}=\r_j \text{ for } (j,\ell)\in \A\ ;\ \r_{j,l}=0 \text{ for } (j,\ell)\in \L.$$
Set
\begin{align}\begin{split}\label{nota}
w_{j,\ell}&=j\quad \text{ for }(j,\ell)\in\E,\\
\lambda_{j,\ell}&= \sqrt{j(j+d-1)+m}\text{ for }(j,\ell)\in\L,\\
(\om_0)_{j,\ell}(\r)&=\sqrt{j(j+d-1)+m+\de\r_j}\text{ for }(j,\ell)\in\A,\\
\zeta&=(\xi_a,\eta_a)_{a\in\L}.
\end{split}\end{align}
With this notation $H$ reads (up to a constant)
$$H(r,\theta,\zeta)= \lan\om_0(\r),r\ran+\sum_{a\in\L} \la\xi_a\eta_a+ \eps f(r,\theta,\zeta)$$
where 
$$f(r,\theta,\zeta)=
\int_{\S^d}G\left(x, \hat u(r,\theta,\zeta)(x)\right)dx$$
and 
\be\label{uhat}
\hat u(r,\theta,\zeta)(x)=\sum_{a\in\A}\frac{\sqrt{2(I_a+r_a)}\cos{\theta_a}}{\la^{1/2}}\Psi_a(x)+\sum_{a\in\L}\frac{(\xi_a+\eta_a)}{\sqrt{ 2}\ \la^{1/2}}  \Psi_a(x) .\ee

 Let us set $ u_1(\theta,x) = \hat u(0,\theta;0)(x) $.
 Then  for any $I\in[1,2]^n$ and $\theta_0\in\T^n$ 
 the function $(t,x)\mapsto u_1(\theta_0+t\om,x)$ is a quasi-periodic solution of \eqref{KG} with
 $\eps=0$.  Our main theorem 
 states that for most external
 parameter $\r$ this quasi-periodic solution persists (but is  slightly deformed) when we turn on the nonlinearity:
\begin{theorem}\label{thmKG} Fix $n$ the cardinality of an admissible set $\A$, $s>1$ the Sobolev regularity and $g$ the nonlinearity. There exists an exponent $\upsilon(d) >0$ such that, for $\eps$ sufficiently small (depending on $n$, $s$ and $g$) and  satisfying
$$\eps\leq \de^{\upsilon(d)}\,,$$
 there exists a Borel subset $ \D'$, positive constants $\a$ and $C$ with
$$\D'\subset [1,2]^n, \quad
\meas([1,2]^n\setminus\D')\leq C\eps^{\a},$$ 
such that for  $\rho\in\D'$, there is a function 
$ u(\theta,x)$, analytic in $\theta\in\T^n_{\frac\s 2}$ and smooth in $x\in\S^d$, satisfying
$$\sup_{|\Im\theta|<\frac\s 2}\|u(\theta,\cdot)-u_1(\theta,\cdot)\|_{H^{s}(\S^d)}
\leq \eps^{11/12},$$
and there is a mapping  $$\om':\D'\to \R^n,\quad
 \|\om'-\om\|_{C^1(\D')}\leq \eps,$$
such that for any $\r\in \D'$ the function 
$$
u(t,x)=u(\theta+t\om'(\r),x)
$$
is a   solution of the Klein Gordon equation \eqref{KG}. Furthermore this solution  is linearly stable.\\ 
 The positive 
  constant $\a$  depends only on  $n$ while $C$ also depends on $g$ and $s$.
 \end{theorem}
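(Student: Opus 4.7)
My plan is to deduce Theorem~\ref{thmKG} from an abstract KAM theorem for parametrized Hamiltonians of the form $H_0+\eps f$ with $f$ in the Sobolev class $\Tb$ described after \eqref{star}. The strategy splits into three phases: (i) reduce the Klein--Gordon equation to the right normal form and verify that the perturbation lies in $\Tb$, (ii) run the KAM iteration using the regularizing gain from $\Tbp$, (iii) prove the measure estimate on the external parameter $\r$.

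\textbf{Preparation.} Starting from $H=\lan\om_0(\r),r\ran+\sum_{a\in\L}\la\xi_a\eta_a+\eps f$, the core check is that $f$, and more importantly its Hessian, belongs to $\Tb$. Concretely, I would bound the block operator norms $\|(\p^2 f)_{[k]}^{[j]}\|$ with the weight $(kj)^\b\big(\frac{\min(j,k)+|j^2-k^2|}{\min(j,k)}\big)^{s/2}$. The two ingredients are (a) the regularizing factor $\Lambda^{-1/2}$ built into $\hat u$ in \eqref{uhat}, which gains one half-derivative per differentiation and produces the $\min(j,k)^{-1/2}$ decay needed to absorb the $\beta$-weight, and (b) the product bounds for spherical harmonics $\Psi_a\Psi_b$ on $\S^d$ used in \cite{BDGS,GIP}, which, after integration against the analytic $G$, give the off-diagonal decay in $|j^2-k^2|$. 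The admissibility condition \eqref{admissible} and the choice $\r_{j,\ell}=\r_j$ on $\A$ ensure that $\r\mapsto\om_0(\r)$ is a local diffeomorphism onto a set of Diophantine frequencies, which supplies the tangential parameters.

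\textbf{KAM iteration.} At each step, after writing $H_\nu=H_0^\nu+\eps_\nu f_\nu$, I solve a homological equation $\{\chi,H_0^\nu\}+f_\nu^{\text{low}}=\lan c,r\ran+\sum_a\Om_a\xi_a\eta_a$ for $\chi$; as emphasized in the introduction, although $f_\nu\in\Tb$, small divisors of the form $\lan k,\om\ran\pm(\la\pm\lb)$ produce $\chi\in\Tbp$, and the invariance $\{\Tb,\Tbp\}\subset\Tb$ established in Section~4 makes the scheme closed. Applying $\exp X_\chi$ and iterating on shrinking domains $\Tc^{s,\b}(\s_\nu,\mu_\nu,\D_\nu)$, I obtain geometric (super-exponential) convergence of $\chi_\nu$ to zero, a limit symplectic change of variables, a limit frequency $\om'$ with $\|\om'-\om\|_{C^1}\leq\eps$, and a limit invariant torus with the bound $\|u-u_1\|_{H^s}\leq\eps^{11/12}$ coming from Cauchy estimates on $X_{\chi_1}$ at the first step. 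Linear stability is automatic since the limit normal form is diagonal in the normal directions, i.e.\ reducibility of the linearized equation around the torus.

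\textbf{Main obstacle: the measure estimate.} The hardest step is excluding only an $O(\eps^{\a})$-set of parameters while maintaining the second Melnikov conditions $\lan k,\om(\r)\ran+\la\pm\lb\gsim\ga\lan k\ran^{-\tau}$ uniformly in $a,b\in\L$. Since the unperturbed eigenvalues $\sqrt{j(j+d-1)+m}$ depend only on the principal index $j$, one cannot resolve modes inside a cluster by scalar parameter moves, and I would follow the Eliasson--Kuksin matrix argument of \cite{EK10}: treat $\la-\lb$ as a block operator on $E_j\otimes E_k$, bound its smallest singular value along the $\r$-direction, and use lattice point estimates for $|j^2-k^2|$ to control the number of resonant blocks. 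This is precisely where the constraint $\eps\leq\de^{\upsilon(d)}$ enters: $\de$ must dominate the perturbation so that the $\de M_\r$-shift of the tangential eigenvalues gives enough transversality, while the gap between $\eps$ and $\de$ keeps the iteration perturbative. Summing the excluded sets over all KAM steps yields $\meas([1,2]^n\setminus\D')\leq C\eps^\a$, closing the argument.
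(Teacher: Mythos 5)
Your top-level plan -- reduce to the abstract KAM Theorem~\ref{main}, verify that $f\in\Tc^{s,\b}$ (Lemma~\ref{FOK2}), verify the spectral hypotheses A1--A3 (Lemma~\ref{AOK2}), then import the dynamical and measure conclusions -- matches the paper's proof, as do your remarks on the role of $\Lambda^{-1/2}$, the $\{\Tb,\Tbp\}\subset\Tb$ closure, and the origin of the $\eps\leq\de^{\upsilon(d)}$ constraint. The verification that the Hessian blocks of $f$ decay the right way is indeed done through a commutator argument for spherical harmonics (the paper uses Bambusi's lemma from \cite{Bam07}; you cite \cite{BDGS,GIP}, which rely on essentially the same mechanism, so this is fine).

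The genuine gap is in your paragraph on the measure estimate. You propose to ``follow the Eliasson--Kuksin matrix argument of \cite{EK10}: treat $\la-\lb$ as a block operator on $E_j\otimes E_k$, bound its smallest singular value along the $\r$-direction, and use lattice point estimates for $|j^2-k^2|$.'' This cannot work here, and the paper's introduction is explicit that the EK10 cluster/lattice machinery ``does not persist'' on $\S^d$: there is no integer lattice whose geometry controls $E_j$, and, more to the point, the external frequencies $\la$ for $a\in\L$ do not depend on $\r$ at all (only the internal modes $\A$ carry the $\de M_\r$-shift, cf.\ \eqref{nota} and \eqref{ouf2}), so there is no ``$\r$-direction'' along which to move singular values of $\la-\lb$. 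What the paper does in Lemma~\ref{AOK2} is quite different and much more elementary: it exploits the near-integrality of the Klein--Gordon spectrum,
$$\left|\sqrt{j(j+d-1)+m}-\left(j+\tfrac{d-1}{2}\right)\right|\leq\frac{C'_{m,d}}{j},$$
so that $\la-\lb$ is within $O(1/w_a)$ of the integer $w_a-w_b$. One then removes the bad set $G_\nu$ where $\lan k,\om\ran+e$ is small for some integer $e$ and $|k|\leq N$ (a finite, $\r$-transversal condition via \eqref{ouf}), after which the Melnikov bound for $w_a\geq 2C'_{m,d}/\nu$ is automatic, and only the finitely many blocks with $w_a,w_b\lesssim\nu^{-1}+N$ remain, handled by the direct transversality bound $F_\ka(k,a,b)$. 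No lattice point counting is used, and the singular value / matrix argument (Lemma~\ref{EK}) is deferred to the KAM iteration itself (Section~\ref{shomo}), not to the verification of Hypothesis A3. If you tried to run your proposed argument as written, you would be stuck at the very first step: there is no parameter direction that moves the external eigenvalues.
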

Notice that in this work we did not try to optimize the exponents. In particular $11/12$ could be replaced by any number strictly less than 1 and the choice of $\upsilon(d)$ obtained by inserting \eqref{d0} in \eqref{ed} is far from optimal. Actually we could expect that $\eps\ll \delta$ is sufficient but the technical cost would be very high. This effort is justified when we try to prove a KAM result without external parameters (see \cite{KP} where the authors obtained a condition of the form $\eps\ll \delta$ in the context of the NLS equation; see also   \cite{EGK1}, \cite{EGK2} for the beam equation and  \cite{Bou} for the 1d wave equation where the authors obtained a condition of the form $\eps\ll \delta^{1+\a}$ for suitable $\a>0$ ).

We will deduce Theorem \ref{thmKG} from an abstract KAM result stated in Section 2 and proved in Section 6. The application to the Klein Gordon equation is detailed in Section 3. Roughly speaking, our abstract theorem  applies to any multidimensional PDE with regularizing nonlinearity and which satisfies the second Melnikov condition (see Hypothesis A3). For instance, it doesn't apply to nonlinear Schr\"odinger on any compact manifold since we have no regularizing effect in that case.  On the contrary, it applies to the beam equation on the torus $\T^d$ (see Remark \ref{rem-beam}). Unfortunately it doesn't apply to the nonlinear wave equation on $\T^d$ (see Remark \ref{rem-NLW}), since in that case the second Melnikov condition is not satisfied.\\
In Section 4 we study the Hamiltonian flows generated by Hamiltonian functions in $\Tc^{s,\beta}$. In  Section 5 we detail the resolution of the homological equation. In both Sections 4 and 5 we use techniques and proofs that were developed in \cite{EK10} and \cite{EGK1}. The novelty lies in the use  of  different norms (see \eqref{star}) and the use of two different classes of Hamiltonians: $\Tc^{s,\beta}$ and $\Tc^{s,\beta+}$ which, of course, complicate the technical arguments.  For convenience of the reader we repeat most of the proofs.  We point out that, for the resolution of the homological equation (Section 5), we use a variant of a Lemma due to Delort-Szeftel \cite{DS1}, whose proof is given in Appendix A.

\medskip

\noindent {\it Acknowledgement:} The authors acknowledge the support from the project ANAE (ANR-13-BS01-0010-03)
of the Agence Nationale de la Recherche.

\section{Setting and abstract KAM theorem.}\label{2}
\noindent {\bf Notations.} In this section we state a KAM result for a Hamiltonian $H=h+ f$ of the following form
$$
H= \lan\om(\r), r \ran+\frac 1 2 \langle \zeta,  A(\r)\zeta\rangle +  f(r,\theta,\zeta;\r)$$
where 
\begin{itemize}
\item $\om \in \R^n$ is the frequencies vector corresponding to the internal modes in action-angle variables $(r,\theta)\in\R^n_+\times \T^n$.
\item  $\zeta=(\zeta_s)_{s\in\L}$ are the external modes:    $\L$ is an infinite set of indices, $\zeta_s=(p_s,q_s)\in\R^2$ and $\R^2$ is endowed with the standard symplectic structure $dq\wedge dp$.
\item $A$ is a linear operator acting on the external modes, typically $A$ is diagonal.
\item  $f$ is a perturbative Hamiltonian depending on all the modes and is of order $\eps$ where
 $\eps$ is a small parameter.
\item  $\r$ is an external parameter in $\D$ a compact subset of $\R^p$ with $p\geq n$.
\end{itemize}
We now detail the structures behind these objects and the hypothesis needed for the KAM result.\\

\noindent {\bf Cluster structure on $\L$.}
Let $\L$ be a set of indices and   $w:\L\to \N \setminus\{ 0\}$ be an "energy" function\footnote{We could replace the assumption that $w$ takes integer values by $\{w_a-w_b\mid a,b\in\L\}$ accumulates on a discrete set. }
 on $\L$.
We consider the clustering of $\L$ given by 
$\L=\cup_{a\in\L}[a]$ associated to equivalence relation 
$$b\sim a \Longleftrightarrow w_a=w_b.$$
We denote  $\hat\L=\L/\sim$.
We  assume that the cardinal of each energy level is finite and  that there exist $C_{b}>0$ and  $d^{*}>0$ two constants such that the cardinality of $[a]$ is controlled by $C_{b}w_a^{d}$:
\be\label{block}d_a=d_{[a]}=\card\{b\in\L\mid w_b=w_a\}\leq C_{b}w_a^{d^{*}}.\ee

 \smallskip

\noindent {\bf Linear space.}
Let  $s\geq 0$, we consider the complex  weighted $\ell_2$-space
$$
Y_s=\{\zeta=(\zeta_a\in\C^2,\ a\in \L)\mid \|\zeta\|_s<\infty\} 
$$
where\footnote{We provide $\C^2$ with the hermitian norm, $|\zeta_a|=|(p_a,q_a)|=\sqrt{|p_a|^2+|q_a|^2}$.}
$$
\|\zeta\|_s^2=\sum_{a\in\L}|\zeta_a|^2 w_a^{2s}.$$

In the spaces $Y_s$ acts the linear operator $J$,
$$
J\ :\ \{\zeta_a\}\mapsto \{\s_2\zeta_a\}, \quad \text{with } \s_2=\left(\begin{array}{cc} 0&-1\\1&0\end{array}\right).$$
 It provides the spaces $Y_s$, $s\geq 0$, with the symplectic structure $J\dd\zeta\wedge\dd \zeta$. To any $C^1$-smooth function defined on a domain $\O\subset Y_s$,  corresponds the Hamiltonian equation 
 $$
 \dot \zeta =J\nabla f(\zeta),$$
 where $\nabla f$ is the gradient with respect to the scalar product in $Y$.
 
 \smallskip
 
  \noindent {\bf Infinite matrices.}
 We denote by $\M_{s,\b}$ the set of infinite matrices $A:\L\times \L\to \M_{2\times 2}(\R)$ with value in the space of real $2\times 2$ matrices that are symmetric
$$
A_a^{b}={}^t\hspace{-0,1cm}A_{b}^a,\quad \forall a,\ b\in \L$$
and satisfy
$$
|A|_{s,\b} := \sup_{a,b\in \L}(w_aw_b)^\beta\left\|A_{[a]}^{[b]}\right\| \Big(\frac{w(a,b)+|w^2_a-w^2_b|}{w(a,b)}\Big)^{s/2}<\infty$$
where $A_{[a]}^{[b]}$ denotes the restriction of $A$ to the block $[a]\times[b]$, $w(a,b) = \min(w_{a},w_{b})$ and $\|\cdot\|$ denotes the operator norm induced by the $Y_{0}$-norm.

\noindent {\bf A class of regularizing Hamiltonian functions.}
Let us fix any $n\in\N$. 
On the space
$$
\C^n\times \C^n \times Y_s$$
we define the norm
$$\|(z,r,\zeta)\|_s=\max( |z|, |r|, \|\zeta\|_s).$$
For $\sigma>0$ we denote
$$
\T^n_\sigma=\{z\in\C^n: | \Im z|<\sigma\}/2\pi\Z^n.
$$
%
For $\sigma,\mu\in(0,1]$ and $s\ge0$ we set
$$
\O^s(\s,\mu)=\T^n_\s\times \{r\in\C^n: |r|<\mu^2\}\times \{\zeta\in Y_s: \|\zeta\|_s<\mu\}
$$
We will denote points in $\O^s(\s,\mu)$ as $x=(\theta,r,\zeta)$.
A
 function defined on a domain $\O^s(\s,\mu)$, is called {\it real} if it gives real values to real
arguments.\\
Let 
$$
\D=\{\rho\}\subset \R^p 
$$ 
be a compact set of positive Lebesgue  measure. This is the set
of parameters upon which will depend our objects. Differentiability of functions on $\D$
is understood in the sense of Whitney. So $f\in C^1(\D)$ if it may be extended to a $C^1$-smooth
function $\tilde f$ on $ \R^p$, and $|f|_{ C^1(\D)}$ is the infimum of  $|\tilde f|_{ C^1(\R^p)}$,
taken over all $C^1$-extensions $\tilde f$ of $f$. \\
If $(z,r,\zeta)$ are $C^1$ functions on $\D$, then we define
$$\|(z,r,\zeta)\|_{s,\D}=\max_{j=0,1}( |\partial^j_\r  z|, |\partial^j_\r  r|, \|\partial^j_\r \zeta\|_s).$$
Let 
 $f:\O^0(\s,\mu)\times\D\to \C$ be a $C^1$-function, 
 real holomorphic in the first variable $x$,
  such that for all $\rho\in\D$
 $$
 \O^{s}(\s,\mu)\ni x\mapsto \nabla_\zeta f(x,\rho)\in Y_{s+\b}$$
and
 $$
 \O^{s}(\s,\mu)\ni x\mapsto\nabla^2_{\zeta} f(x,\rho)\in \M_{s,\beta}$$
are real holomorphic functions. We denote this set of functions by 
  $\Tc^{s,\beta}(\s,\mu,\D)$. We notice that for $\b>0$, both the gradient and the hessian of $f\in\Tc^{s,\beta}(\s,\mu,\D)$ have a regularizing effect.\\
 For a function $f\in \Tc^{s,\beta}(\s,\mu,\D)$ we define
 the norm 
 $$[f]^{s,\beta}_{\s,\mu,\D}$$
 through 
$$\sup
\max( |\partial^j_\r f(x,\r)|,\mu \|\partial^j_\r \nabla_\zeta f(x,\r)\|_{s+\b},
\mu^2|\partial^j_\r \nabla^2_\zeta f(x,\r)|_{s,\beta}),
$$
where the supremum is taken over all
$$
j=0,1,\ x\in \O^{s}(\s,\mu),\ \rho\in\D.$$
In the case $\b=0$ we denote $\Tc^{s}(\s,\mu,\D)=\Tc^{s,0}(\s,\mu,\D)$ and
$$[f]^{s}_{\s,\mu,\D}=[f]^{s,0}_{\s,\mu,\D}.$$

\noindent{\bf Normal form:} 
We introduce the orthogonal projection $\Pi$ defined on the $2\times 2$ complex matrices 
$$\Pi: \M_{2\times 2}(\C)\to \C I+\C J$$
where 
$$I=\left(\begin{array}{cc} 1&0\\0&1\end{array}\right) \quad \text{and}\quad J=\left(\begin{array}{cc} 0&-1\\1&0\end{array}\right).$$
\begin{definition}
 A matrix $A:\ \L\times \L\to \M_{2\times 2}(\C)$ is on normal form  and we denote $A\in  \NF$ if
 \begin{itemize}
 \item[(i)] $A$ is real valued,
 \item[(ii)] $A$ is symmetric, i.e. $A_b^a={}^t\hspace{-0,1cm}A_a^b$,
 \item[(iii)] $A$ satisfies $\Pi A=A$,
 \item[(iii)] $A$ is block diagonal, i.e. $A_b^a=0$ for all $w_a\neq w_b$.
 \end{itemize}
 \end{definition}
To a real symmetric matrix $A=(A_a^b)\in \M$  we associate in a unique way a real quadratic form on $Y_s\ni (\zeta_a)_{a\in \L}=(p_a,q_a)_{a\in\L}$ 
$$q(\zeta)=\frac 1 2 \sum_{a,b\in\L}\langle \zeta_a,\ A_a^b\zeta_b\rangle.$$
In the complex variables, $z_a=(\xi_a,\eta_a),\ a\in \L$, where
$$
\xi_a=\frac 1 {\sqrt 2} (p_a+iq_a),\quad \eta_a =\frac 1 {\sqrt 2} (p_a-iq_a),$$
we have
$$
q(\zeta)=\frac 1 2\langle \xi,\nabla_\xi^2 q\ \xi\rangle+ \frac 1 2\langle \eta,\nabla_\eta^2 q\ \eta\rangle+\langle \xi,\nabla_{\xi}\nabla_{\eta} q\ \eta\rangle    .$$
The matrices $\nabla_\xi^2 q$ and $\nabla_\eta^2 q$ are symmetric and complex conjugate of each other while $\nabla_{\xi}\nabla_{\eta} q$ is Hermitian. If $A\in \msb$ then
\be\label{q}
\sup_{a,b}\big\|(\nabla_\xi\nabla_\eta q)_{[a]}^{[b]}\big\| \leq\frac{|A|_{s,\b}}{(w_aw_b)^{\beta}\left({1+|w_a-w_b|}\right)^s}.\ee
We note that if $A$ is on normal form, then the associated  quadratic form $q(\zeta)=\frac 1 2 \langle \zeta,A\zeta\rangle$ reads in complex variables
\be\label{Q} q(\zeta)=\langle \xi,Q \eta\rangle \ee
where $Q:\L\times\L\to \C$ is 
\begin{itemize}
\item[(i)] Hermitian, i.e. $Q_b^a=\overline{Q_a^b}$,
\item[(ii)] Block-diagonal. 
\end{itemize}
In other words, when $A$ is on normal form, the associated quadatic form reads
$$q(\zeta)=\frac 1 2 \langle p,A_1 p\rangle+ \langle p,A_2 q\rangle+\frac 1 2 \langle p,A_1 q\rangle$$
with $Q=A_1+iA_2$ Hermitian.\\
 By extension we will say that a Hamiltonian is on normal form if it reads
\be\label{h}
h=\lan\om(\r), r \ran+\frac 1 2\langle \zeta, A(\r)\zeta\rangle\ee
with $\om(\r)\in\R^n$ a frequency vector and $A(\r)$ on normal form for all $\r$.

\smallskip

\subsection{Hypothesis on the spectrum of $A_0$.} We assume that  $A_0$ is a real diagonal matrix whose diagonal elements $\la>0, \ a\in\L$ are $C^1$. Our hypothesis depend on two constants $1>\delta_0>0$ and $c_0>0$ fixed once for all.

\smallskip

\noindent {\bf Hypothesis A1 -- Asymptotics.}
We assume that there exist $\ga\geq1$   such that
\be\label{laequiv}
c_{0}\, w_{a}^{\ga}\leq \la \leq  \frac{1}{c_0} w_a^{\ga} \quad \mbox{  for  } \r\in\D \text{ and } a\in\L
\ee
and
\be\label{la-lb}
|\la -\lb |\geq {c_0}{|w_a-w_b|} \quad \text{for }a,b\in\L\,. 
\ee

\smallskip

\noindent
{\bf Hypothesis A2 -- Non resonances.}
There exists a $\delta_0>0$ such that for all $\Ca^1$-functions
$$\omega:\D\to \R^n,\quad |\omega-\omega_0|_{\Ca^1(\D)}<\delta_0,$$
the following holds for each  $k\in\Z^n\setminus 0$: either we have the following properties :
$$
\left\{\begin{array}{cc} | \langle k,\om(\r)\rangle|\geq \delta_0& \mbox{for all}\;  \r\in\D,\\ 
 | \langle k,\om(\r)\rangle+\la|\geq \delta_0 w_a& \mbox{ for all} \; \r\in\D \; \mbox{and}\; a\in\L,\\
 |  \langle k,\om(\r)\rangle+\la+\lb|\geq \delta_0(w_a+w_b)& \mbox{ for all}\;  \r\in\D \;\mbox{and} a,\ b\in\L,\\
 |  \langle k,\om(\r)\rangle+\la-\lb|\geq \delta_0 (1+|w_a-w_b|)& \mbox{ for all}\;  \r\in\D\;\mbox{ and} \;a,\ b\in\L,\end{array}\right. $$
 or  there exists a unit vector ${\mathfrak z}\in\R^p$  such that 
$$ \nazz  (\langle k,\omega\rangle)  \geq \delta_0$$ for all  $\r\in\D$.  The first term of the alternative will be used in order to control the small divisors for large $k$, and the second one is featured  to control them for small $k$.

%
%
%

The last assumption above will be used to bound from below divisors $|  \langle k,\om(\r)\rangle+\la(\r)-\lb(\r)|$
with $w_a,\, w_b\sim1$. To control the (infinitely many) divisors with $\max(w_a,w_b)\gg1$ we need another assumption:
\medskip

\noindent
{\bf Hypothesis A3 -- Second Melnikov condition in measure.} There exist absolute constants $\a_1>0$, $\a_2>0$ and $C>0$ such that
for all $\Ca^1$-functions
$$\omega:\D\to \R^n,\quad |\omega-\omega_0|_{\Ca^1(\D)}<\delta_0,$$
the following holds:\\
for each $\ka>0$ and $N\ge1$ there exists a  closed subset $\D'=\D'(\om_0,\kappa,N)\subset \D$  satisfying 
\be\label{mesomega2}
\meas(\D\setminus {\D'})\leq CN^{\a_1}  (\frac{\ka}{ \delta_0} )^{\a_2}\quad (\a_1,\a_2\ge 0)  \ee
 such that for all $\r\in{\D'}$,  all $0<|k|\leq N$ and  all $a,b\in\L$ 
 we have
 \be\label{D33}
| \langle k,\om(\r)\rangle +\la-\lb|\geq \ka(1+|w_a-w_b|).
\ee

\subsection{The abstract KAM Theorem.}
We are now in position to state our abstract KAM result.
\begin{theorem}\label{main} Assume that 
\be \label{h0}h_0=\lan\om_0(\r), r\ran +\frac 1 2 \langle \zeta,  A_0\zeta\rangle\ee  with the spectrum of $A_0$ satisfying Hypothesis A1, A2, A3 and let $f\in\Tc^{s,\beta}(\D,\s,\mu)$ with $\b>0$, $s>0$. 
There exists $\eps_0>0$ (depending on $n,d,s,\b,\s,\mu$, on $\A$, $c_{0}$ and $\sup|\nabla_{\rho}\omega|$), $\a>0$  (depending on $n$, $d^*$, $s$, $\b$, $\a_1$, $\a_2$) and $\upsilon(\b,d^{*})>0$ such that\footnote{An explicit choice of $\upsilon$ is given in \eqref{ed} but is surely far from optimality.} if 
$$[f]^{s,\beta}_{\s,\mu,\D}=\eps<\min \left(\eps_0 , \delta_0^{\upsilon(\b,d^{*})}\right)$$
there is a $\D'\subset \D$ with $\text{meas}(\D\setminus \D')\leq \eps^\a$ such that for all $\r\in \D'$ the following holds: There are a real analytic symplectic diffeomorphism
$$\Phi:\O^s(\s/2,\mu/2)\to \O^s(\s,\mu)$$
and a vector $\om=\om(\r)$ such that 
$$
(h_{0}+  f)\circ \Phi= \lan\om(\r), r\ran +\frac 1 2 \langle \zeta,  A(\r)\zeta\rangle + \tilde f(r,\theta,\zeta;\r)$$
where $\partial_\zeta \tilde f=\partial_r \tilde f=\partial^2_{\zeta\zeta}\tilde f=0$ for $\zeta=r=0$ and $ A:\L\times\L \to \M_{2\times 2}(\R)$  is on normal form, i.e. $ A$ is real symmetric and block diagonal: $ A_{a}^{b}=0$ for all $w_a\neq w_b$.\\
Moreover $\Phi$ satisfies
$$
\|\Phi-\mathrm{Id}\|_s\leq  \eps^{11/12}\,,$$
for all $(r,\theta,\zeta)\in \O^s(\s/2,\mu/2)$, and
\begin{align*} &\left|  A(\r)-A_0\right|_\b\leq  \eps, \\
&| \om(\r)-\om_0(\r)|_{C^1}\leq  \eps
\end{align*}
 for all $\r\in \D'$.
 \end{theorem}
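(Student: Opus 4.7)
The plan is to run a standard KAM scheme, with the crucial twist being the two-scale structure $\Tc^{s,\b} \supset \Tc^{s,\b+}$ that accommodates the regularizing effect of the homological equation and circumvents the usual loss of derivatives. I set up inductively a sequence of Hamiltonians $H_n = h_n + f_n$ with $h_n$ in normal form (frequencies $\om_n$ and block-diagonal matrix $A_n$), $f_n \in \Tc^{s,\b}(\s_n,\mu_n,\D_n)$ and $[f_n]^{s,\b}_{\s_n,\mu_n,\D_n} \leq \eps_n$, where $\eps_{n+1} \approx \eps_n^{3/2}$, $\s_n \searrow \s/2$, $\mu_n \searrow \mu/2$, and cutoffs $N_n = \eps_n^{-c}$, $\ka_n = \eps_n^{1/3}$. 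Since the paper's norms control only the gradient and Hessian of $f_n$ in $\zeta$, I first separate the ``bad'' part of $f_n$: the Taylor polynomial in $(r,\zeta)$ of total degree $\leq 2$ in $\zeta$ and $\leq 1$ in $r$, truncated to Fourier modes $|k| \leq N_n$. The high-Fourier tail is negligible by analyticity in $\theta$, and the cubic-and-higher-in-$(r,\zeta)$ part is $O(\mu_n \eps_n)$ which is absorbed in $\eps_{n+1}$.

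I then solve the homological equation $\{h_n, F_n\} + \tilde f_n = [\tilde f_n]$, where $[\tilde f_n]$ is the normal-form projection of the truncated bad part. This is exactly the problem analyzed in Section 5 of the paper: Hypothesis A2 controls divisors $\lan k,\om_n\ran$, $\lan k,\om_n\ran \pm \la$, $\lan k,\om_n\ran + \la + \lb$ for all $k$, while Hypothesis A3, applied to the current $\om_n$ (which stays within $\delta_0/2$ of $\om_0$ by a telescoping argument $\sum_k \eps_k \ll \delta_0$), produces a closed subset $\D_{n+1} \subset \D_n$ of measure deficit $\lesssim N_n^{\a_1}(\ka_n/\delta_0)^{\a_2}$ on which the divisors $\lan k,\om_n\ran + \la - \lb$ are bounded below by $\ka_n(1 + |w_a - w_b|)$. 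The bonus factor $1 + |w_a - w_b|$ on these (dominant) divisors is precisely what makes the solution $F_n$ live in the more regular class $\Tc^{s,\b+}$, with a bound $[F_n]^{s,\b+}_{\s_n,\mu_n,\D_{n+1}} \lesssim \ka_n^{-c_1} \eps_n$.

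Conjugating $H_n$ by the time-one flow $\Phi_n = \exp(X_{F_n})$ yields $H_{n+1} = h_{n+1} + f_{n+1}$ where $h_{n+1} = h_n + [\tilde f_n]$ is again in normal form (with $|\om_{n+1} - \om_n|_{\Ca^1} + |A_{n+1} - A_n|_{s,\b} \lesssim \eps_n$), and
$$f_{n+1} = \int_0^1 \{(1-t)[\tilde f_n] + t\tilde f_n,\ F_n\} \circ \Phi_n^t\, dt + (f_n - \tilde f_n) \circ \Phi_n.$$
The decisive analytical input, established in Section 4, is the bracket estimate $\{\Tc^{s,\b}, \Tc^{s,\b+}\} \hookrightarrow \Tc^{s,\b}$ together with the quantitative bound $[\{F,G\}]^{s,\b}_{\s',\mu'} \lesssim (\s - \s')^{-c_2} [F]^{s,\b+}_{\s,\mu}\,[G]^{s,\b}_{\s,\mu}$. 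Applying this and choosing the scales so that $(\s_n - \s_{n+1})^{-c_2}\ka_n^{-c_1}\eps_n \ll 1$ gives $[f_{n+1}]^{s,\b}_{\s_{n+1},\mu_{n+1},\D_{n+1}} \leq \eps_{n+1}$, closing the induction. The symplectic diffeomorphism $\Phi := \lim_{n\to\infty} \Phi_0 \circ \cdots \circ \Phi_n$ converges on $\O^s(\s/2,\mu/2)$ with the claimed bound $\eps^{11/12}$, and the limits $\om_\infty, A_\infty$ define the desired normal form on $\D' := \bigcap_n \D_{n+1}$.

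The main obstacle is the bookkeeping that reconciles three competing demands: first, the summed measure deficit $\sum_n N_n^{\a_1}\ka_n^{\a_2} \leq \eps^\a$ forces $\ka_n$ and $N_n^{-1}$ not to shrink too fast; second, the bracket estimate requires $\eps_n$ to beat the compounded losses $\ka_n^{-c_1}(\s_n - \s_{n+1})^{-c_2}$ coming from the homological equation and the Cauchy estimates in $\theta$; third, the smallness assumption $\eps \leq \delta_0^{\upsilon(\b,d^*)}$ must be compatible with maintaining $|\om_n - \om_0|_{\Ca^1} < \delta_0$ for all $n$, which is needed so that Hypotheses A2 and A3 can be reapplied at every step. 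Tracking these three constraints produces the exponent $\upsilon(\b,d^*)$ announced in the theorem; the dependence on $\b$ enters through the bracket estimate (the larger $\b$ is, the more regularizing the gradient and hence the more favourable the bound), while $d^*$ enters through the size of the blocks $[a]$ appearing in the operator norm $\| \cdot \|$. I expect the delicate point to be verifying that the bracket estimate between the two classes $\Tc^{s,\b}$ and $\Tc^{s,\b+}$ truly produces an element of $\Tc^{s,\b}$ with the stated polynomial losses, as this is what distinguishes the present iteration from the exponentially-weighted framework of Eliasson--Kuksin.
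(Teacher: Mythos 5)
Your proposal follows essentially the same scheme as the paper's proof in Sections~4--6: extract the jet $f^T$, solve the homological equation with Hypotheses~A2/A3 to produce $S\in\Tc^{s,\b+}$, close the iteration through the bracket estimate $\{\Tc^{s,\b+},\Tc^{s,\b}\}\subset\Tc^{s,\b}$ of Lemma~\ref{lemma-poisson}, excise parameters, and pass to the limit. The only deviations are in the bookkeeping (the paper uses $\eps_{j+1}=\eps_j^{5/4}$, a logarithmic cutoff $N_j\sim(\s_j-\s_{j+1})^{-1}\ln\eps_j^{-1}$, and $\ka_j=\eps_j^{1/(24(2+d^*/2\b))}$, whereas you propose $\eps_n^{3/2}$ and a polynomial $N_n=\eps_n^{-c}$), but since you explicitly defer these choices and they can be tuned to satisfy the three competing constraints you list, this is a matter of detail rather than of substance.
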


This  normal form result has dynamical consequences.  For $\r\in\D'$, the torus $\{0\}\times\T^n\times\{0\}$ is invariant by the flow of $(h_0+f)\circ \Phi$ and the dynamics of the Hamiltonian vector field of $h_0+f$ on the $\Phi(\{0\}\times\T^n\times\{0\})$ is the same as that of $$\lan\om(\r), r\ran +\frac 1 2 \langle \zeta,  A(\r)\zeta\rangle .$$
The Hamiltonian vector field on the torus $\{\zeta=r=0\}$ is
$$
\left\{\begin{array}{l}
\dot\zeta=0\\
\dot \theta=\om\\
\dot r=0,
\end{array}\right.
$$
and  the flow on the torus is linear: $t\mapsto \theta(t)=\theta_0+t\om$.\\
Moreover, the linearized equation on this torus reads 
$$
\left\{\begin{array}{l}
\dot\zeta=JA\zeta+J\partial^2_{r\zeta}f(0,\theta_0+\om t,0)\cdot r\\
\dot\theta=\partial^2_{r\zeta}f(0,\theta_0+\om t,0)\cdot \zeta+\partial^2_{rr}f(0,\theta_0+\om t,0)\cdot r\\
\dot r=0.
\end{array}\right.
$$
Since $A$ is on normal form (and in particular real symmetric and block diagonal) the eigenvalues of the $\zeta$-linear part are purely imaginary: $\pm i \tilde\lambda_a,\ a\in\L$. Therefore the invariant torus is linearly stable in the classical sense (all the eigenvalues of the linearized system are purely imaginary). Furthermore if the $\tilde\lambda_a$ are non-resonant with
respect to the frequency vector  $\om$ (a property  which can be guaranteed restricting   the
set $\D'$ arbitrarily little) 
then the linearized equation is reducible to constant coefficients.  Then the $\zeta$-component (and of
course also the $r$-component) will have only quasi-periodic
(in particular bounded) solutions. 

\section{Applications to Klein Gordon on $\S^d$}
 In this section we prove Theorem \ref{thmKG} as a corollary of Theorem \ref{main}. We use notations introduced in the introduction (see in particular \eqref{nota}).
Then the Klein Gordon Hamiltonian $H$ reads (up to a constant)
$$H(r,\theta,\zeta)= \lan\om_0(\r),r\ran+\sum_{a\in\L} \la\xi_a\eta_a+ \eps f(r,\theta,\zeta)$$
where 
$$f(r,\theta,\zeta)=
\int_{\S^d}G\left(x, \hat u(r,\theta,\zeta)(x)\right)dx .$$

\begin{lemma}\label{AOK2}
Hypothesis A1, A2 and A3 hold true with $\D=[1,2]^n$ and  
\be\label{d0}\de_0=\Big(\frac{\de}{2\sqrt{2+d+m}\max({w_a,\ a\in\A})}\Big)^3.\ee \end{lemma}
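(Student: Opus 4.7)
The plan is to verify Hypotheses A1, A2, A3 in turn, setting $\de_* := \de/(2J\sqrt{2+d+m})$ with $J = \max_{a\in\A} w_a$, so that the prescribed threshold is $\de_0 = \de_*^3$. Write $\lambda(w) := \sqrt{w(w+d-1)+m}$ so that $\la = \lambda(w_a)$ for $a\in\L$.

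Hypothesis A1 is immediate: $\lambda(w)\sim w$ gives $\ga = 1$ with $c_0$ depending only on $d,m$, and the separation bound follows by factoring $\la - \lb = (w_a-w_b)(w_a+w_b+d-1)/(\la+\lb)$. For Hypothesis A2, the structural input is that $\p_\r\om_0$ is diagonal with entries $\p_{\r_i}\om_{0,i}(\r) = \de/(2\om_{0,i}(\r)) \geq \de_*$, using $\om_{0,i}\leq J\sqrt{2+d+m}$. For every $k\in\Z^n\setminus\{0\}$ I take the unit vector $\zz = k/|k|_{\ell^2} \in\R^n$; a direct computation gives
\[
\nazz \lan k, \om_0(\r)\ran = \frac{\de}{2|k|_{\ell^2}}\sum_i \frac{k_i^2}{\om_{0,i}(\r)} \geq \de_*|k|_{\ell^2} \geq \de_*.
\]
Any $C^1$-perturbation with $|\om - \om_0|_{C^1(\D)} < \de_0 = \de_*^3$ contributes at most $\sqrt{n}\,\de_*^3 |k|_{\ell^2}$, which is negligible compared to $\de_*|k|_{\ell^2}$ once $\de$ is small. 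So the second alternative of A2 holds for every non-zero $k$; the first alternative is never invoked.

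For A3 the same directional derivative applies, because $\la$ does not depend on $\r$ for $a\in\L$. By a Fubini argument along $\zz$ across $\D = [1,2]^n$, the resonant set for each fixed $(k,a,b)$ has measure at most $C\ka(1+|w_a-w_b|)/(|k|_{\ell^2}\de_*)$. I restrict to $|k|\leq N$ and (via A1 together with $|\lan k, \om\ran|\lesssim N$) to $|w_a-w_b|\leq C_1 N$. The constraint depends only on the pair of energies $(w_a,w_b)$ rather than on the full block indices, so it suffices to sum over $\N\times\N$ pairs of energy values. The main obstacle is that, for each fixed $\Delta w := w_a - w_b$, there are infinitely many pairs $(w_b+\Delta w, w_b)$ with $w_b$ unbounded, and a naive union bound diverges.

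This is resolved by a Pyartli-type argument exploiting the monotone convergence $\lambda(w'+\Delta w) - \lambda(w') = \Delta w + O(|\Delta w|/{w'}^2)$ as $w'\to\infty$. Splitting the sum at $W_0 = \lceil 1/\sqrt{\ka}\,\rceil$: for $w'\leq W_0$ a union bound over $W_0$ individual bad sets contributes $\sim \sqrt{\ka}(1+|\Delta w|)/(|k|_{\ell^2}\de_*)$; for $w' > W_0$ all shifts $\lambda(w'+\Delta w) - \lambda(w')$ lie within $O(\ka(1+|\Delta w|))$ of $\Delta w$, so their preimages collapse into one bad set of comparable measure. Summing over $|\Delta w|\leq C_1 N$ and over $0 < |k|\leq N$ (using $\sum_{|k|\leq N}|k|_{\ell^2}^{-1}\lesssim N^{n-1}$), the total bad measure is $\leq C N^{n+1}\sqrt{\ka}/\de_*$, which is bounded by $C N^{n+1}(\ka/\de_0)^{1/2}$ since $\de_* \leq 1$. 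This verifies A3 with $\a_1 = n+1$ and $\a_2 = 1/2$.
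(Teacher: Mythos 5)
Your verification of A1 and A2 follows the paper's argument essentially verbatim: $\gamma=1$ with $c_0$ depending on $d,m$, the separation from factoring $\la-\lb$, and the choice $\zz=k/|k|$ with the computation $\nazz\lan k,\om_0\ran\ge\de_*|k|$. Your treatment of A3, however, takes a genuinely different organizational route. The paper first proves the cruder inequality $|\lan k,\om\ran+\la-\lb|\ge\ka$ on a good set, built by removing (a) a set $G_\nu$ indexed by integers $e\in\Z$ that absorbs all high-energy pairs via the asymptotic $\la\approx w_a+\frac{d-1}{2}+O(1/w_a)$, and (b) finitely many $F_\ka(k,a,b)$ for low-energy pairs; it then rescales $\ka\mapsto\tilde\ka=\ka/(1+CN)$ at the end to reach the required $\ka(1+|w_a-w_b|)$ lower bound, landing on $\a_1=n+3,\ \a_2=1/3$. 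You instead aim directly for the $\ka(1+|w_a-w_b|)$ inequality, parameterize by the energy difference $\Delta w$, split at $\min(w_a,w_b)=W_0\sim\ka^{-1/2}$, and observe that for $\min(w_a,w_b)>W_0$ the quantities $\la-\lb$ all collapse within $O(\ka(1+|\Delta w|))$ of $\Delta w$; this, together with keeping the $|k|^{-1}$ gain from the Fubini step, yields the sharper exponents $\a_1=n+1,\ \a_2=1/2$. The two arguments exploit exactly the same asymptotic of $\la-\lb$, so they are morally parallel, but yours is cleaner (no final rescaling step) and quantitatively a bit stronger. One small caveat: $\sum_{0<|k|\le N}|k|_{\ell^2}^{-1}\lesssim N^{n-1}$ only holds for $n\ge2$; for $n=1$ it is $\log N$, so the safe bound is $\lesssim N^{n}$ and $\a_1=n+2$, which is still of the required form. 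You should also record that the conclusion uses $\delta_*\le1$ (which holds because $\delta$ is a small parameter) when converting $\sqrt\ka/\de_*$ into $(\ka/\de_0)^{1/2}$ with $\de_0=\de_*^3$, as you note.
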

\proof
Hypothesis A1 is clearly satisfied with $c_0=1/2$ and $\ga=1$. The control of the cardinality of the clusters \eqref{block} is given with $d^{*}=d-1$.\\
On the other hand choosing $\mathfrak z\equiv z_k=\frac k{|k|}$ we have
\be\label{ouf} \nazz  (\langle k,\omega\rangle) \geq \frac{\de}{2\max({(\omega_{0})_a,\ a\in\A})} |k|\geq \frac{\de}{\sqrt{2+d+m}\max({w_a,\ a\in\A})} |k| \quad \text{ for all }k\neq 0\ee
 while 
\be\label{ouf2} \nazz  \la =0\quad \text{ for all } a\in\L .\ee
Then for all $k\neq 0$ the second part of the alternative in Hypothesis A2 is satisfied choosing $$\de_0\leq \de_* :=\frac{\de}{2\max({w_a,\ a\in\A})\sqrt{2+d+m}}.$$
It remains to verify A3. Without loss of generality we can assume $w_a\leq w_b.$\\
First  denoting
$$F_\ka(k,a,b):=\{\r\in\D\mid |\lan \om,k\ran +\la-\lb|\leq \ka\},$$
we have using \eqref{ouf} that
$$\meas F_\ka(k,a,b)\leq C(k,a,b) \frac{\ka}{\delta_*}.$$
On the other hand, defining
$$G_\nu(k,e):=\{\r\in\D\mid |\lan \om,k\ran+e|\leq 2\nu\},$$
we have, using again \eqref{ouf} that
$$\meas G_\nu(k,e)\leq C \frac{\nu}{\delta_*}.$$
Further $|\lan \om,k\ran+e|\leq 1$ can occur only if $|e|\leq C|k|$ and thus
$$G_\nu=\bigcup_{\substack{{0<|k|\leq N}\\{e\in\Z}}} G_\nu(k,e)$$
has a Lebesgue measure less than $CN^{n+1} \frac{\nu}{\delta_*}.$\\
Now we remark that 
$$|j+\frac {d-1}{2}-\sqrt{j(j+d-1)+m}|\leq \frac {C'_{m,d}}{j}$$
where $C'_{m,d}$ only depends on $m$ and $d$, from which we deduce
$$|\la-\lb -(w_a-w_b)|\leq \frac {2C'_{m,d}}{w_a}.$$
Therefore for $\r\in \D\setminus G_\nu$ and $w_a\geq \frac {2C'_{m,d}} \nu$ we have for all  $0<|k|\leq N$ 
$$|\lan \om,k\ran +\la-\lb|\geq \nu.$$
Finally  $w_a\leq \frac {2C'_{m,d}} \nu$ and $|\lan \om,k\ran +\la-\lb|\leq 1$ leads to $w_b\leq \frac {2C'_{m,d}} \nu+CN$ and thus,
if we restrict $\r$ to 
$$\D'=\D\setminus \Big[G_\nu \cup \Big(  \bigcup_{\substack{{0<|k|\leq N}\\{w_a,w_b\leq \frac {2C'_{m,d}} \nu+CN}}}F_\ka(k,a,b) \Big)\Big]$$
we get
$$|\lan \om,k\ran +\la-\lb|\geq \min(\ka,\nu), \quad 0<|k|\leq N,\ a,b\in\L.$$
Further
$$\meas \D\setminus \D'\leq CN^{n+1} \frac{\nu}{\delta_*}+\left(\frac {2C'_{m,d}} \nu+CN \right)^2N^n\frac{\ka}{\delta_*}.$$
Then choosing $\nu= \ka ^{1/3}$ and $\de_0=\de_*^{3}$, this measure is controlled by
$$CN^{n+2}\big(\frac \ka {\de_0}\big)^{1/3}$$
and we have
$$|\lan \om,k\ran +\la-\lb|\geq\ka, \quad \text{for } \r\in\D',\  0<|k|\leq N \text{ and } a,b\in\L.$$
Now we remark that for $|\la-\lb|\geq 2|\lan \om,k\ran|$, 
$$|\lan \om,k\ran +\la-\lb|\geq \frac 12 |\la-\lb|\geq \frac 14 (1+|w_a-w_b|)\geq \ka (1+|w_a-w_b|)$$
if we assume $\ka\leq \frac 14$. \\
On the other hand, when $|\la-\lb|\leq 2|\lan \om,k\ran|\leq CN$, 
$$|\lan \om,k\ran +\la-\lb|\geq  \tilde\ka(1+|w_a-w_b|)$$
where $\tilde\ka=\frac\ka {1+CN}$. Thus we get 
$$|\lan \om,k\ran +\la-\lb|\geq\tilde\ka(1+|w_a-w_b|), \quad \text{for } \r\in\D',\  0<|k|\leq N \text{ and } a,b\in\L$$
with
$$\meas  \left(\D\setminus \D'\right)\leq CN^{n+3}\big(\frac {\tilde\ka} {\de_0}\big)^{1/3}.$$
  \endproof
\begin{lemma}\label{FOK2}
Assume that $(x,u)\mapsto g(x,u) $ is real analytic on $\S^d\times\R$ and $s>1$ then there exist $\s>0$, $\mu>0$ such that the maping 
$$\O^s(\s,\mu)\times\D\ni(r,\theta,\zeta;\r)\mapsto f(r,\theta,\zeta;\r):=\int_{S^d}G(x,\hat u(r,\theta,\zeta)(x)) dx,$$
where $\hat u$ is defined in \eqref{uhat}, belongs to $\Tc^{s,1/2}(\s,\mu,\D)$ for any $s$ of the form $2N-\frac12$ with $N\in\N$ and $N>d$.
\end{lemma}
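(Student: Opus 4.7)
My plan is to exploit the $1/2$-power regularization hidden in $\hat u$ through the prefactors $\la^{-1/2}$, combined with the Banach algebra property of high Sobolev spaces on $\S^d$ and a Delort-Szeftel type block-multiplication estimate.

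First, by Hypothesis A1 one has $\la\sim w_a$ for $a\in\L$, so Parseval in the orthonormal spherical harmonic basis $\{\Psi_a\}$ yields
$$
\biggl\|\sum_{a\in\L}\frac{\xi_a+\eta_a}{\sqrt 2\,\la^{1/2}}\Psi_a\biggr\|_{H^{s+1/2}(\S^d)}^2
\;\lesssim\;\sum_{a\in\L}\frac{w_a^{2s+1}}{w_a}|\zeta_a|^2
\;\lesssim\;\|\zeta\|_s^2.
$$
Together with the smooth finite-dimensional contribution coming from $\A$, this shows that $(r,\theta,\zeta)\mapsto\hat u$ is a real analytic map from $\O^s(\s,\mu)$ into $H^{s+1/2}(\S^d)$, for suitable $\s,\mu$. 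Since $s+1/2 = 2N > d$, the space $H^{s+1/2}(\S^d)$ is a Banach algebra; combined with the real analyticity of $G$ and its vanishing at order $3$ in $u$, this makes $u\mapsto G(\cdot,u(\cdot))$ analytic from a ball of $H^{s+1/2}$ into itself. Hence $f$ is real analytic on $\O^s(\s,\mu)$ with $|f|\lesssim\mu^3$.

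Next I would compute the $\zeta$-derivatives explicitly. For $a\in\L$, with $g=\partial_u G$,
$$
\frac{\partial f}{\partial\xi_a}=\frac{1}{\sqrt 2\,\la^{1/2}}\,\bigl\langle g(\cdot,\hat u),\Psi_a\bigr\rangle_{L^2(\S^d)},
$$
and an analogous formula holds for $\partial f/\partial\eta_a$. Parseval then gives
$$
\|\nabla_\zeta f\|_{s+1/2}^2\;\lesssim\;\sum_{a\in\L}\frac{w_a^{2s+1}}{\la}\bigl|\widehat{g(\cdot,\hat u)}_a\bigr|^2\;\lesssim\;\|g(\cdot,\hat u)\|_{H^s(\S^d)}^2,
$$
which is uniformly bounded on $\O^s(\s,\mu)$ by the first step applied to $g$. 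For the Hessian, for $a,b\in\L$,
$$
\frac{\partial^2 f}{\partial\xi_a\partial\xi_b}=\frac{1}{2\,\la^{1/2}\lb^{1/2}}\int_{\S^d}V\,\Psi_a\Psi_b\,dx,\qquad V:=(\partial_u g)(\cdot,\hat u)\in H^s(\S^d),
$$
so that $\nabla_\zeta^2 f$ factors as $T\,M\,T$, where $T$ is block-diagonal with entries $\la^{-1/2}\sim w_a^{-1/2}$ and $M$ is the matrix of multiplication by $V$ in the basis $\{\Psi_a\}$. The two factors of $T$ contribute exactly $(w_aw_b)^{-1/2}$, which cancels the prefactor $(w_aw_b)^{\b}$ with $\b=1/2$ in the definition of $|\cdot|_{s,1/2}$.

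The main step, and the main obstacle, is then the block-wise multiplication estimate
$$
\sup_{a,b\in\L}\bigl\|M_{[a]}^{[b]}\bigr\|\biggl(\frac{w(a,b)+|w_a^2-w_b^2|}{w(a,b)}\biggr)^{s/2}\;\lesssim\;\|V\|_{H^s(\S^d)},
$$
which is precisely the variant of the Delort-Szeftel lemma stated and proved in Appendix A. The unusual weight $|w_a^2-w_b^2|$, rather than $|w_a-w_b|$, reflects the fact that the Laplace-Beltrami eigenvalues on $\S^d$ scale as $j^2$, and is the very reason the appendix is needed at all. The specific choice $s=2N-\tfrac12$ with $N>d$ makes $s+1/2$ an integer strictly greater than $d$, which is convenient both for the algebra property and for the spherical harmonic estimates in Appendix A. Concerning the $C^1$-dependence on $\r$, note that $\r_{j,\ell}=0$ on $\L$, so $\r$ enters $\hat u$ only through the $\la$ with $a\in\A$, i.e.\ through a finite-dimensional smooth piece; hence $\partial_\r$ acts on only finitely many coefficients and each of the above bounds persists after applying $\partial_\r$ once, giving $f\in\Tc^{s,1/2}(\s,\mu,\D)$.
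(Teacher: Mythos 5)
Your overall architecture is the right one and matches the paper's in spirit: the two factors $\la^{-1/2}$, $\lb^{-1/2}$ supply exactly the $(w_aw_b)^{-1/2}$ needed to cancel the $(w_aw_b)^{\b}$ weight with $\b=1/2$; the algebra property of $H^{s+1/2}(\S^d)$ gives analyticity of $f$; and the whole lemma reduces to a block-multiplication bound
$$\sup_{a,b}\;\bigl\|M_{[a]}^{[b]}\bigr\|\Bigl(\frac{w(a,b)+|w_a^2-w_b^2|}{w(a,b)}\Bigr)^{s/2+1/4}\;\lesssim\;\|V\|_{H^{s+1/2}(\S^d)},\qquad V=\partial_u^2 G(\cdot,\hat u),$$
where $M$ is multiplication by $V$ restricted to spherical-harmonic eigenblocks. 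Up to this point you are correct (and you even avoid the paper's typographical slip writing $G$ where $\partial_u G$ and $\partial_u^2 G$ are meant).

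The gap is in how you discharge this key estimate. You assert that it is ``precisely the variant of the Delort--Szeftel lemma stated and proved in Appendix A.'' It is not. Appendix A contains Lemma~\ref{delort}, which bounds the operator obtained by \emph{dividing} a matrix entrywise by small divisors $\langle k,\om\rangle+\eps\mu_j-\mu_\ell$, under the hypothesis \eqref{hypdelort}; it is a homological-equation lemma, used in Section~\ref{shomo}, and has nothing to do with the boundedness of the multiplication-by-$V$ operator between eigenblocks. No block-multiplication lemma is stated anywhere in the paper. What the paper actually does, inside the proof of Lemma~\ref{FOK2} itself, is invoke the commutator technique of Bambusi \cite[Prop.~2, Lemmas~7--8]{Bam07} (going back to Delort--Szeftel \cite{DS1}): for the multiplication operator $A$ by $V$, set $A_0=A$, $A_N=[-\Delta,A_{N-1}]$, obtain
$$\Bigl|\int_{\S^d}V\,\Phi_a\Phi_b\,dx\Bigr|\;\le\;\frac{1}{|w_a^2-w_b^2|^N}\bigl|\langle A_N\Phi_a,\Phi_b\rangle\bigr|\;\lesssim\;\frac{1}{|w_a^2-w_b^2|^N}\,\|\Phi_a\|_{H^N}\,\|V\|_{H^{2N}}\;\lesssim\;\Bigl(\frac{w_a}{|w_a^2-w_b^2|}\Bigr)^N\|V\|_{H^{2N}},$$
and then choose $N=\frac12(s+\frac12)$, which is a positive integer precisely because $s=2N-\frac12$. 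This integer constraint is the real reason for the restriction on $s$, not merely that $s+1/2$ be an integer bigger than $d$. So your proof is incomplete: the central quantitative step is outsourced to a result that does not exist in the cited location, and the actual mechanism (the commutator iteration) is missing. You need to supply that calculation to have a complete proof.
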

\proof First we notice that $f$ does not depend on the parameter $\r$.
Due to  the analyticity of $g$ and the fact that\footnote{$s>d/2$ is needed to insure that $Y_s$ is an algebra.} $s>d/2$, there exist positive
 $\s$ and $\mu$  such that $f:\O(\s,\mu)\times\D\to \C$ is  a $C^1$-function, analytic in the first variables $(r,\theta,\zeta)$, whose gradient in $\zeta$ analytically 
maps $Y_{s}$ to $Y_{-s}$. Further we have
$$
\frac{\partial f}{\partial\xi_a}=\frac{\partial f}{\partial\eta_a}=\frac { 1}{2\la^{1/2}}\int_{\S^d}G(x,\hat u(x))\Psi_a(x)\, dx.
$$ 
Since $x\mapsto G(x,\hat u(x))\in H^s(\S^d)$, we deduce that $\nabla_\zeta f \in Y_{s+1/2}$.\\
It remains to verify that $\nabla^2_\zeta f(r,\theta,\zeta;\r)\in\M_{s,1/2}$.\\
We have
\be
\frac{\partial^2 f}{\partial\xi_a\xi_b}=\frac{\partial^2 f}{\partial\eta_a\eta_b}=\frac{\partial^2 f}{\partial\xi_a\eta_b}=\frac { 1}{2\la^{1/2}\lb^{1/2}}\int_{\S^d}G(x,\hat u(x))\Psi_a\Psi_b \, dx.
\ee
 We note that for $s>d/2$ and $(r,\theta,\zeta)\in\O^s(\s,\mu)$, $x\mapsto \hat u(x)$ is bounded on $\S^d$.\\
It remains to prove that  the infinite matrix $M$ defined by
$$M_a^b= \frac { 1}{\la^{1/2}\lb^{1/2}}\int_{\S^d}G(x,\hat u)\Psi_a\Psi_b \, dx$$
belongs to $\M_{s,1/2}$, i.e.
$$\sup_{a,b\in\L}w_a^{1/2}w_b^{1/2}\left\| M_{[a]}^{[b]}\right\|\Big(\frac{w(a,b)+|w^2_a-w^2_b|}{w(a,b)}\Big)^{s/2+1/4}<\infty$$
where we recall that $w(a,b)=\min(w_a,w_b)$. The case $w_{a}=w_{b}$ is straightforward, since $\lambda_{a} \sim w_{a}$, $x\mapsto \hat u(x)$ is bounded on $\S^d$, and $\Phi_{a}$, $\Phi_{b}$ are normalized in $L^{2}(\S^{d})$. \\
If $w_{a}\neq w_{b}$, first we notice that
$$\left\| M_{[a]}^{[b]}\right\|=\sup_{\|u\|,\|v\|=1}|\langle M_{[a]}^{[b]}u,v  \rangle|=\frac { 1}{\la^{1/2}\lb^{1/2}}\sup_{\substack{\Phi_a\in E_{[a]},\ \|\Phi_a\|=1\\ \Phi_b\in E_{[b]},\ \|\Phi_b\|=1}}\Big|\int_{\S^d}G(x,\hat u)\Phi_a\Phi_b \, dx\Big|,$$
where $E_{[a]}$ (resp. $E_{[b]}$) is the eigenspace of $-\Delta$ associated to the cluster $[a]$ (resp. $[b]$).   
Then we follow arguments developed in \cite[Proposition 2]{Bam07}. The basic idea lies in the following commutator lemma: Let $A$ be a linear operator which maps $H^s(S^d)$ into itself and define the sequence of operators
$$
A_N:=[-\Delta,A_{N-1}], \quad A_0:=A$$
where $\Delta$ denotes the Laplace Beltrami operator on $\S^d$, then with \cite[Lemma 7]{Bam07}, we have for any $a,b\in\L$ with $w_a\neq w_b$ and any $N\geq 0$
$$
|{\langle A\Phi_a,\Phi_b\rangle}|\leq \frac{1}{|w^2_a-w^2_b|^N}|{\langle A_N\Phi_a,\Phi_b\rangle}|.$$
Let $A$ be the  operator  given by the multiplication by the function $$\Phi(x)=G(x,\hat u(r,\theta,\zeta)(x)).$$ We note that $\Phi\in H^{s+1/2}$ for $(r,\theta,\zeta)\in \O^s(\s,\mu)$.  Then, by an induction argument,
$$
A_N=\sum_{0\leq |{\alpha}|\leq N}C_{\alpha,N}D^\alpha$$
where
$$
C_{\alpha,N}= \sum_{0\leq |{\beta}|\leq 2N-|{\alpha}|} V_{\alpha,\beta,N}(x) D^\beta \Phi$$
and $V_{\alpha,\beta,N}$ are $C^\infty$ functions (cf. \cite[Lemma 8]{Bam07}). Therefore one gets
\begin{align*}\begin{split}\label{3.5}
|{\int_{S^d}\Phi_a\Phi_b\Phi dx}|&\leq  \frac{1}{|w_a^2-w_b^2|^N}\norma{A_N \Phi_a}_{L^2}\\
&\leq C   \frac{1}{|w_a^2-w_b^2|^N} 
\sum_{0\leq |{\alpha}| \leq  N} \sum_{0\leq |{\beta}| \leq 2N-|{\alpha}|} ||  D^\beta \Phi D^\alpha \Psi_a ||_{L^2}  \\
& \leq C   \frac{1}{|w_a^2-w_b^2|^N} 
 \Big(\sum_{0\leq |{\alpha}| \leq  N/2}\sum_{0\leq |{\beta}| \leq 2N-|{\alpha}|}  \norma{\Phi_a}_{|\alpha|+\nu_0} ||\Phi ||_{|\beta|} \\
&+ \sum_{N/2< |{\alpha}| \leq  N} \sum_{0\leq |{\beta}| \leq 2N-|{\alpha}|}  \norma{\Phi_a}_{|\alpha|} ||\Phi ||_{|\beta|+\nu_0} \Big) \\
&  \leq C \frac{1}{|w_a^2-w_b^2|^N}  \norma{\Phi_a}_{N} ||\Phi ||_{{2N}}                    
\end{split}\end{align*}
where we used 
$$\forall \nu_0 > d/2 \qquad \|fg\|_{L^2} \leq C \|f\|_{{\nu_0}} \|g\|_{L^2} .            $$
On the other hand since $-\Delta \Phi_a= w_a(w_a+d-1)\Phi_a$
\be\label{phi-s}
\norma{\Phi_a}_N\leq Cw_a^{N}.\ee
Therefore choosing $N=\frac12 (s+\frac12)$
\begin{align*}
|{\int_{S^d}\Phi_a\Phi_b\Phi dx}|&\leq  C \Big(\frac{w_a}{|w_a^2-w_b^2|}\Big)^{s/2+1/4}  \\
&\leq  2^{s/2+1/4}C \Big(\frac{w_a}{w(a,b)+|w_a^2-w_b^2|}\Big)^{s/2+1/4} .                   
\end{align*}
Clearly the same estimate remains true when interchanging $a$ and $b$. 
\endproof

So  Main Theorem applies (for any choice of  vector $I\in[1,2]^\A$) and Theorem \ref{thmKG} is proved.
 
\begin{remark}\label{rem-Zoll}
Theorem \ref{thmKG} still holds true when we consider the Klein Gordon equation on a Zoll manifold. This technical extension follows from  results and computations in \cite{DS1} and \cite{BDGS}. We prefer to focus on the sphere in order to simplify the  presentation.
\end{remark}
\begin{remark}\label{rem-beam} We can also consider the Beam equation on the torus $\T^d$  with convolution potential in a Sobolev-like phase space:
\be \label{beam}u_{tt}+\Delta^2 u+m u+V\star u + \eps \partial_u G(x,u)=0 ,\quad   x\in \T^d.
\ee
 Here $m$ is the mass,  $G$ is a real analytic function on $\T^d\times \R$ vanishing at least of order 3 at the origin. The convolution potential $V:\ \T^d\to \R$ is supposed to be analytic with
   real  positive Fourier coefficients $\hat V(a)$, $a\in\Z^d$. The same equation, but in an analytic phase space, were considered in \cite{EGK1,EGK2}. Actually following \cite{EGK1} and the proof of Lemma \ref{FOK2}, in order to apply our abstract KAM theorem, it remains to control the $|\cdot|_{s,1/2}$-norm of the infinite matrix\footnote{Here $\la=\sqrt{|a|^4+m}$ and $\Psi_a(x)=e^{ia\cdot x}$, $a\in\Z^d$.}
$$M_a^b= \frac { 1}{\la^{1/2}\lb^{1/2}}\int_{\T^d}\partial^2_uG(x,u)\Psi_a\Psi_b \, dx$$
restricted to the block defined by $[a]=\{b\in\Z^d\mid |a|=|b|\}$. This is achieved in the same way as in Lemma \ref{FOK2}. 
\end{remark}
\begin{remark}\label{rem-NLW} 
Notice that our theorem does not apply to the nonlinear wave equation:
\be \label{NLW}u_{tt}+\Delta u+m u+V\star u + \eps \partial_u G(x,u)=0 ,\quad   x\in \T^d
\ee
since in that case the second Melnikov condition is not satisfied.
\end{remark}

\section{Poisson brackets and Hamiltonian flows.}
It turns out that the  space $\Tc^{s,\beta}(\s,\mu,\D)$ is not stable by Poisson brackets.
Therefore, in this section, we first define a new  space $\Tc^{s,\beta+}(\s,\mu,\D)\subset \Tc^{s,\beta}(\s,\mu,\D)$ and then we prove a structural stability  which is essentially contained in the claim
 $$\{\Tc^{s,\beta+}(\s,\mu,\D)\ ,\ \Tc^{s,\beta}(\s,\mu,\D)\} \in \Tc^{s,\beta}(\s,\mu,\D).$$  
We will also study the hamiltonian flows generated by hamiltonian functions in $\Tc^{s,\beta+}(\s,\mu,\D)$. In this section, all constants $C$ will depend only on $s$, $\beta$ and $n$.

\subsection{New Hamiltonian space}
We introduce $\Tc^{s,\beta+}(\s,\mu,\D)$ defined by
$$\Tc^{s,\beta+}(\s,\mu,\D)=\{f\in \Tc^{s,\beta}(\s,\mu,\D)\mid\ \partial^j_\r \nabla^2_\zeta f\in\msb^+,\ j=0,1\}$$
where 
$$\msb^+=\{M\in\msb\mid |M|_{s,\b+} <\infty \} $$
and 
\begin{align*}|M|_{s,\b+}=\sup_{a,b\in\L}(1+|w_a-w_b|)\Big(\frac{w(a,b)+|w^2_a-w^2_b|}{w(a,b)}\Big)^{\frac{s}{2}}(w_aw_b)^\beta\left\|M_{[a]}^{[b]}\right\| .\end{align*}
We endow $\Tc^{s,\beta+}(\s,\mu,\D)$ with the norm
$$[f]_{\s,\mu,\D}^{s,\b+}=[f]_{\s,\mu,\D}^{s,\b}+\sup_{j=0,1}\Big(\mu^2|\partial^j_\r \nabla^2_\zeta f|_{s,\b+}\Big).$$

\begin{lemma}\label{product} Let $0<\b\leq 1$ and $s>d/2$ there exists a constant $C\equiv C(\b,s)>0$ such that
\begin{itemize}
\item[(i)]
Let $A\in \msb$ and $B\in \msb^+$ then $AB$ and $BA$ belong to $\msb$ and
$$|AB|_{s,\b},\ |BA|_{s,\b}\leq C|A|_{s,\b}|B|_{s,\b+}.$$
\item[(ii)]
Let $A,B\in \msb^+$  then $AB$ and $BA$ belong to $\msb^+$ and
$$|AB|_{s,\b+},\ |BA|_{s,\b+}\leq C|A|_{s,\b+}|B|_{s,\b+}.$$
\item[(iii)] Let $A\in \msb^+$ then $A\in\L(Y_s,Y_{s+\b})$  and
$$\|A\zeta\|_{s+\b}\leq C|A|_{s,\b+}\|\zeta\|_s.$$
\item[(iv)] Let $X\in Y_s $ and $Y\in Y_s$  and  denote $A=X\otimes Y$ then $A$ and ${}^tA$ belong to $\L(Y_s)$ and
$$\|A\|_{\L(Y_s)},\|{}^tA\|_{\L(Y_s)}\leq C\|X\|_{s}\|Y\|_s.$$
\item[(v)] Let $X\in Y_{s+\b} $ and $Y\in Y_{s+\b}$  then $A=X\otimes Y\in\msb$ and
$$\|A\|_{s,\b}\leq C\|X\|_{s+\b}\|Y\|_{s+\b}.$$
\end{itemize}
\end{lemma}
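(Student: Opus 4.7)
I would handle items (i), (ii), (iii) via block-matrix decompositions plus summability arguments, and items (iv)--(v) by direct rank-one computations. The central preliminary, used in (i)--(iii), is a triangle inequality for the weight $\tau(a,b):=(w(a,b)+|w_a^2-w_b^2|)/w(a,b)=1+|w_a-w_b|(w_a+w_b)/w(a,b)$, namely
\[
\tau(a,b)\leq C\,\tau(a,c)\,\tau(c,b),
\]
which I would prove by splitting on the position of $w_c$ relative to $\min(w_a,w_b)$ and $\max(w_a,w_b)$. Combined with the elementary $1+|w_a-w_b|\leq (1+|w_a-w_c|)+(1+|w_c-w_b|)$, this lets me distribute the weights appearing in the $|\cdot|_{s,\beta}$-- and $|\cdot|_{s,\beta+}$--norms across the two factors of $AB$.

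For (i), starting from $\|(AB)_{[a]}^{[b]}\|\leq\sum_{[c]}\|A_{[a]}^{[c]}\|\cdot\|B_{[c]}^{[b]}\|$ and multiplying by $(w_aw_b)^\beta\tau(a,b)^{s/2}$, the triangle inequalities above reduce matters to the uniform-in-$w_b$ bound
\[
\sum_{k=1}^\infty \frac{1}{k^{2\beta}(1+|k-w_b|)}\leq C,
\]
the sum running over cluster representatives $k=w_c$; this is straightforward by splitting into $k\ll w_b$, $k\sim w_b$, $k\gg w_b$ and using $\beta>0$. For (ii), the extra left-hand factor $(1+|w_a-w_b|)$ is handled by the additive form of the triangle inequality, whose ratio $(1+|w_a-w_b|)/[(1+|w_a-w_c|)(1+|w_c-w_b|)]\leq (1+|w_a-w_c|)^{-1}+(1+|w_c-w_b|)^{-1}$ reduces the problem to two copies of the sum in (i).

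For (iii), I would write $A=\sum_{k\geq 0}A^{(k)}$ with $A^{(k)}$ supported on pairs $(a,b)$ satisfying $|w_a-w_b|=k$. Using $\tau(a,a\pm k)\geq 1+2k$ and the bound $\|A_{[a]}^{[a\pm k]}\|\leq |A|_{s,\beta+}(w_aw_{a\pm k})^{-\beta}\tau^{-s/2}(1+k)^{-1}$, a change of index of summation plus the absorption of $w_a^{-2\beta}$ into $\|\zeta\|_s^2$ (for $\beta>0$ and $w\geq 1$) yields
\[
\|A^{(k)}\zeta\|_{s+\beta}\leq C|A|_{s,\beta+}(1+k)^{-1-s/2}\|\zeta\|_s,
\]
which is summable in $k$ for $s>0$.

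Items (iv) and (v) concern the rank-one operator $(X\otimes Y)\zeta=\langle Y,\zeta\rangle X$, whose transpose is $Y\otimes X$; in particular $\|(X\otimes Y)_{[a]}^{[b]}\|\leq|X_{[a]}|\,|Y_{[b]}|$. Item (iv) follows from $|\langle Y,\zeta\rangle|\leq\|Y\|_0\|\zeta\|_0\leq\|Y\|_s\|\zeta\|_s$. For (v), I would use $|X_{[a]}|\leq w_a^{-(s+\beta)}\|X\|_{s+\beta}$ combined with the crude bound $\tau(a,b)^{s/2}\leq C(w_a+w_b)^s$, noting that $w_a^{-s}w_b^{-s}(w_a+w_b)^s=(w_a^{-1}+w_b^{-1})^s\leq 2^s$ on $w_a,w_b\geq 1$. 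The main obstacle is a clean proof of the triangle inequality for $\tau$, which is not subadditive in the obvious sense because the weight mixes $|w_a-w_b|$ and $w_a+w_b$; once that is in hand, the remaining estimates are routine bookkeeping parallel to \cite{EK10,EGK1}.
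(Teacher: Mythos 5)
Your proposal is correct and, for items (i), (ii), (iv), (v), follows essentially the same route as the paper. The multiplicative triangle inequality for $\tau(a,b)$ that you flag as ``the main obstacle'' is exactly what the paper isolates as Lemma~\ref{lem-A1} in the appendix and proves by the same case analysis you sketch (position of $w_c$ relative to $\min$ and $\max$); moreover it holds with $C=1$, since the reciprocal form $\frac{\min(j,\ell)}{\min(j,\ell)+|j^2-\ell^2|}\geq\frac{\min(j,k)}{\min(j,k)+|j^2-k^2|}\cdot\frac{\min(k,\ell)}{\min(k,\ell)+|k^2-\ell^2|}$ is exact. The summability lemma you invoke is the paper's Lemma~\ref{lem-A2}. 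In (ii) your algebraic identity $\frac{1+|w_a-w_b|}{(1+|w_a-w_c|)(1+|w_c-w_b|)}\leq\frac{1}{1+|w_a-w_c|}+\frac{1}{1+|w_c-w_b|}$ replaces the paper's geometric split of the $c$-sum at $w_c=\tfrac12(w_a+w_b)$; both extract the same factor and both reduce to two copies of Lemma~\ref{lem-A2}. The one genuine divergence is (iii): the paper works directly with the full double sum, splits into $w_a\leq 2w_b$ and $w_a\geq 2w_b$, and closes with a discrete Young/convolution estimate in $\ell^p\star\ell^q\subset\ell^2$, whereas you decompose $A=\sum_k A^{(k)}$ along diagonals $|w_a-w_b|=k$, bound each $A^{(k)}$ separately in $\L(Y_s,Y_{s+\beta})$ by $C(1+k)^{-1-s/2}$ using $\tau(a,a\pm k)\geq 1+2k$, and sum the geometric-like tail. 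Your version is arguably cleaner and avoids the slightly delicate choice of exponents $p,q$ in the Young argument; be aware, though, that the step you describe as ``absorption of $w_a^{-2\beta}$'' needs to be handled more carefully in the off-diagonal direction $w_a>w_b$, where one must use that $\tau(a,b)\gtrsim w_a^2/w_b$ rather than merely $\tau\geq 1+2k$ in order to control $w_a^{2s}/w_b^{2s+2\beta}$; once that refinement is made the estimate closes as claimed.
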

\proof
(i) Let $a,b\in\L$
\begin{align*}
\left\| (AB)_{[a]}^{[b]} \right\|&\leq \sum_{c\in\Lc}\left\| A_{[a]}^{[c]} \right\|\left\| B_{[c]}^{[b]} \right\|\\
&\leq  \frac{|A|_{\b+}|B|_{\b}}{(w_aw_b)^{\b}}\Big(\frac{w(a,b)}{w(a,b)+|w_a^2-w_b^2|}\Big)^{\frac{s}2}
\sum_{c\in\Lc}\frac 1 {w_c^{2\b}(1+|w_a-w_c|)}\\
&\leq C \frac{|A|_{\b+}|B|_{\b}}{(w_aw_b)^{\b}}\Big(\frac{w(a,b)}{w(a,b)+|w_a^2-w_b^2|}\Big)^{\frac{s}2}
\end{align*}
where we used that by Lemma \ref{lem-A1}
$$\frac{w(a,b)}{w(a,b)+|w_a^2-w_b^2|}\geq  \frac{w(a,c)}{w(a,c)+|w_a^2-w_c^2|}\frac{w(c,b)}{w(c,b)+|w_c^2-w_b^2|}$$
 and that by Lemma \ref{lem-A2}, $\sum_{c\in\Lc}\frac 1 {w_c^{2\b}(1+|w_a-w_c|)}\leq C$ where $C$ only depends on $\b$.\\ 
(ii) Similarly let $a,b\in\L$ and assume without loss of generality that $w_a\leq w_b$
\begin{align*}
\left\| (AB)_{[a]}^{[b]} \right\|&\leq \sum_{c\in\Lc}\left\| A_{[a]}^{[c]} \right\|\left\| B_{[c]}^{[b]} \right\|\\
&\leq \frac{|A|_{\b+}|B|_{\b+}}{(w_aw_b)^{\b}}\Big(\frac{w(a,b)}{w(a,b)+|w_a^2-w_b^2|}\Big)^{\frac{s}2}
\sum_{c\in\Lc}\frac 1 {w_c^{2\b}(1+|w_a-w_c|)(1+|w_b-w_c|)}\\
&\leq  \frac{2|A|_{\b+}|B|_{\b+}}{(w_aw_b)^{\b}(1+|w_a-w_b|)}\Big(\frac{w(a,b)}{w(a,b)+|w_a^2-w_b^2|}\Big)^{\frac{s}2}\\
&\Big(\sum_{\substack{c\in\Lc \\ w_c\leq \frac12(w_a+w_b) }}\frac 1 {w_c^{2\b}(1+|w_a-w_c|)}
 +\sum_{\substack{c\in\Lc \\  w_c\geq \frac12(w_a+w_b) }}\frac 1 {w_c^{2\b}(1+|w_b-w_c|)}\Big)\\
&\leq C\frac{|A|_{\b+}|B|_{\b+}}{(w_aw_b)^{\b}(1+|w_a-w_b|)}\Big(\frac{w(a,b)}{w(a,b)+|w_a^2-w_b^2|}\Big)^{\frac{s}2}.
\end{align*}
(iii) Let $\zeta\in Y_s$  we have
\begin{align*}
\| A\zeta\|^2_{s+\b}&\leq\sum_{a\in\Lc}w_a^{2s+2\b}\big(\sum_{b\in\Lc}\|A_{[a]}^{[b]}\| \|\zeta_{[b]}\|\big)^2\\
&\leq |A|^2_{s,\b+}\sum_{a\in\Lc}\Big(\sum_{b\in\Lc}\frac{w_a^{s}\|w_b^s\zeta_{[b]}\|}{w_b^{s+\b}(1+|w_a-w_b|)}\Big(\frac{w(a,b)}{w(a,b)+|w_a^2-w_b^2|}\Big)^{\frac{s}2}\Big)^2\\
&\leq 2^{2s+1} |A|^2_{s,\b+}\sum_{a\in\Lc}\Big(\sum_{\substack{b\in\Lc \\ w_a\leq 2w_b) }}\frac{\|w_b^s\zeta_{[b]}\|}{w_b^\b(1+|w_a-w_b|)}\\
&\hspace{1cm}+\sum_{\substack{b\in\Lc \\ w_a\geq 2 w_b }} \frac{\|w_b^s\zeta_{[b]}\|w(a,b)^{\frac{s}2}}{w_b^{s+\b}(1+|w_a-w_b|)} \Big)^2\\
&\leq 2^{2s+1} |A|^2_{s,\b+}\sum_{a\in\Lc}\big(\sum_{b\in\Lc}\frac{\|w_b^s\zeta_{[b]}\|}{w_b^\b(1+|w_a-w_b|)}\big)^2\\
&\leq C|A|^2_{s,\b+}\|\zeta\|_s^2
\end{align*}
where we used that the convolution between the $\ell^{p}$ sequence, $p<2$, $\|w_b^{s-\beta}\zeta_{[b]}\|$ and the $\ell^{q}$ sequence, $q=\frac{2p}{3p-2}>1$, $\frac{1}{(1+|w_b|)}$ is a $\ell^{2}$ sequence in $a$ whose norm is bounded by $C\|\zeta\|_{s}$. \\
(iv) Let $u\in Y_s$, we have
\begin{align*}
\|Au\|_s&=|\langle Y,u\rangle| \|X\|_s\leq \|X\|_s\|Y\|_s\|u\|_s .
\end{align*}
(v) Let $a,b\in\L$
\begin{align*}
\left\| A_{[a]}^{[b]} \right\|&= \|X_{[a]}\| \|Y_{[b]}\|\leq (w_aw_b)^{-s-\b}\|X\|_{s+\b} \|Y\|_{s+\b}\\
&\leq (w_aw_b)^{-\b}\frac1{(1+|w^2_a-w^2_b|)^{s/2}}\|X\|_{s+\b} \|Y\|_{s+\b}\\ 
&\leq (w_aw_b)^{-\b}\Big(\frac{w(a,b)}{(w(a,b)+|w^2_a-w^2_b|)}\Big)^{s/2}\|X\|_{s+\b} \|Y\|_{s+\b}.
\end{align*}
\endproof

\subsection{ Jets of functions.} \label{ss5.1}

For any function $h\in \Tc^{s}(\s,\mu,\D)$ we define its jet $h^T=h^T(x,\r)$ as the following 
 Taylor polynomial of $h$ at $r=0$ and $\zeta=0$:
\be\begin{split}
\label{jet}
h^T=&h_\theta+\langle h_r, r\rangle+\langle h_\zeta,\zeta\rangle+\frac 1 2 \langle h_{\zeta\zeta}\zeta,\zeta \rangle\\
=&h(\theta,0,\r)+\langle \nabla_rh(\theta,0,\r),r \rangle+\langle \nabla_\zeta h(\theta,0,\r),\zeta\rangle+\frac 1 2 \langle \nabla^2_{\zeta \zeta}h(\theta,0,\r)\zeta,\zeta \rangle
\end{split}\ee
Functions of the form  $h^T$ will be called {\it jet-functions.}\\
Directly from  the definition of the norm 
$[h]^{s,\b }_{\s,\mu,\D}$ we get that 
\begin{align}\begin{split}\label{norm2}
&|h_\theta(\theta,\r)|\leq[h]^{s}_{\s,\mu,\D}, 
\quad |h_r(\theta,\r)|\leq\mu^{-2}[h]^{s}_{\s,\mu,\D},\\
&\|h_\zeta(\theta,\r)\|_{s+\b} \leq \mu^{-1}[h]^{s,\b}_{\s,\mu,\D},\quad
| h_{\zeta \zeta}(\theta,\r)|_{s,\b}\leq\mu^{-2}[h]^{s,\b}_{\s,\mu,\D},
\end{split}\end{align}
for any $\theta\in\T^n_\s$ and any $\r\in \D$. Moreover, the first derivative with respect to $\r$ will satisfy the same estimates.\\
We also notice that by Cauchy estimates we have that for $x\in\O(\s,\mu')$
\be\label{Cauchy}
\|\nabla^2_\zeta h(x)\|_{\L(Y_s,Y_{s+\b})}\leq \frac{\sup_{y\in\O(\s,\mu)}\|\nabla_\zeta h(y)\|_{s}}{\mu-\mu'}.\ee
Thus $h_{\zeta\zeta}$ is a linear continuous operator from $Y_s$ to $Y_{s+\b}$ and
\be \label{norm3}\|h_{\zeta\zeta}(\theta,\r)\|_{\L(Y_s,Y_{s+\b})}\leq \mu^{-2}[h]^{s}_{\s,\mu,\D} \ee
for any $\theta\in\T^n_\s$ and any $\r\in \D$. \\
\begin{proposition}\label{lemma:jet}
For any 
$h\in \Tc^{s,\b}(\s,\mu,\D)$  we have $h^T\in \Tc^{s,\b}(\s,\mu,\D)$,
$$
[h^T]^{s,\b}_{\s,\mu,\D}\leq C  [h]^{s,\b}_{\s,\mu,\D}\,,$$
and, for any   $0<\mu' < \mu$,
$$
[h-h^T]^{s,\b}_{\s,\mu',\D}
\leq  C \left(\frac{\mu'}{\mu}\right)^3 
 [h]^{s,\b}_{\s,\mu,\D}\,,
$$
where $C$ is an absolute constant. 
\end{proposition}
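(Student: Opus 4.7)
The plan is to split the proposition into two independent estimates and handle each by reducing to a one-variable Cauchy estimate on a suitable holomorphic curve in the $(r,\zeta)$-space.

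\textbf{First estimate, $[h^T]^{s,\b}_{\s,\mu,\D}\le C[h]^{s,\b}_{\s,\mu,\D}$.} This is a direct consequence of the bounds \eqref{norm2} and \eqref{norm3} already recorded in Section \ref{ss5.1}. Indeed, $h^T$ is the sum of four terms; bounding each separately on $\O^s(\s,\mu)$ (with $|r|<\mu^2$ and $\|\zeta\|_s<\mu$) and using the triangle inequality controls $|h^T|$ by $C[h]^{s,\b}_{\s,\mu,\D}$. Taking $\nabla_\zeta$ kills the first two terms and gives $\nabla_\zeta h^T=h_\zeta(\theta,0,\r)+h_{\zeta\zeta}(\theta,0,\r)\zeta$, which is bounded in $Y_{s+\b}$ (using \eqref{norm2} for the first summand and Lemma \ref{product}(iii) together with $\|\zeta\|_s<\mu$ for the second). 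Finally $\nabla_\zeta^2 h^T=h_{\zeta\zeta}(\theta,0,\r)$ is controlled in $|\cdot|_{s,\b}$ by \eqref{norm2}. The $\r$-derivatives are handled identically, since differentiation with respect to $\r$ commutes with the jet operation.

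\textbf{Second estimate, scaling reduction.} Fix $(\theta,\r)\in \T^n_\s\times\D$, and $(r_0,\zeta_0)\in\C^n\times Y_s$ with $|r_0|\le 1$, $\|\zeta_0\|_s\le 1$. Consider the scalar function
\[
H(t):=h(\theta,\,t^2 r_0,\,t\zeta_0,\,\r), \qquad |t|<\mu.
\]
The map $t\mapsto(\theta,t^2r_0,t\zeta_0)$ sends $\{|t|<\mu\}$ into $\O^s(\s,\mu)$, so $H$ is holomorphic on $\{|t|<\mu\}$ and $\sup_{|t|<\mu}|H(t)|\le[h]^{s,\b}_{\s,\mu,\D}$. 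The crucial point is that, under this anisotropic scaling, a Taylor monomial $\partial_r^j\partial_\zeta^k h(\theta,0,0)\cdot r^j\zeta^{\otimes k}$ picks up a factor $t^{2j+k}$. The jet $h^T$ collects precisely the monomials indexed by $(j,k)\in\{(0,0),(1,0),(0,1),(0,2)\}$, with weights $2j+k\in\{0,2,1,2\}$; all excluded monomials satisfy $2j+k\ge 3$. Therefore $H(t)-h^T(\theta,t^2 r_0,t\zeta_0,\r)$ is a holomorphic function of $t$ vanishing to order at least $3$ at the origin.

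\textbf{Third step, Cauchy estimates.} Applying the Schwarz lemma (or summing the Cauchy bound $|H^{(m)}(0)|/m!\le[h]\,\mu^{-m}$ for $m\ge 3$), for $|t|\le\mu'<\mu$ one obtains
\[
|H(t)-h^T(\theta,t^2 r_0,t\zeta_0,\r)|\le [h]^{s,\b}_{\s,\mu,\D}\,\frac{(|t|/\mu)^3}{1-|t|/\mu}.
\]
Since every $(r,\zeta)\in\O^s(\s,\mu')$ is of the form $(\mu'^2 r_0,\mu'\zeta_0)$ with $|r_0|,\|\zeta_0\|_s<1$, setting $t=\mu'$ and restricting to the case $\mu'\le\mu/2$ (the complementary case is trivial by the triangle inequality combined with the first estimate) yields $|h-h^T|\le 2[h]\,(\mu'/\mu)^3$ on $\O^s(\s,\mu')$. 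The very same argument applied to the $Y_{s+\b}$-valued map $t\mapsto\nabla_\zeta h(\theta,t^2r_0,t\zeta_0,\r)$ (which now has jet of order $\le 1$ in $t$, so residue of order $2$) and to the $\M_{s,\b}$-valued map $t\mapsto\nabla_\zeta^2 h(\theta,t^2r_0,t\zeta_0,\r)$ (jet of order $0$, residue of order $1$) produces bounds
\[
\|\nabla_\zeta(h-h^T)\|_{s+\b}\lesssim \mu^{-1}[h]\,(\mu'/\mu)^2, \qquad |\nabla_\zeta^2(h-h^T)|_{s,\b}\lesssim \mu^{-2}[h]\,(\mu'/\mu).
\]
Multiplying respectively by $\mu'$ and by $\mu'^2$ as required by the definition of $[\cdot]^{s,\b}_{\s,\mu',\D}$, all three contributions become bounded by $C(\mu'/\mu)^3[h]^{s,\b}_{\s,\mu,\D}$. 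Finally, $\r$-derivatives commute with the scaling $(r,\zeta)\mapsto(t^2r_0,t\zeta_0)$ and with the jet, so the same Cauchy argument applied to $\partial_\r h$ delivers the corresponding bounds for $j=1$.

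\textbf{Main technical point.} The proof is entirely elementary once the right scaling is identified; the only nontrivial observation is the matching of the anisotropic weights $\mu^2$ on $r$ and $\mu$ on $\zeta$ with the weighting $2j+k$ of Taylor monomials, which is what makes the jet $h^T$ correspond to the order-two Taylor polynomial of $H(t)$ and gives the cubic gain $(\mu'/\mu)^3$ uniformly on all three components of the norm.
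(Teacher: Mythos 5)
Your proof follows essentially the same strategy as the paper: an anisotropic scaling of $(r,\zeta)$ (paper: $z\mapsto((z/m)^2 r,(z/m)\zeta)$; you: $t\mapsto(t^2 r_0,t\zeta_0)$) turning the jet into a degree-$2$ Taylor polynomial of a one-variable holomorphic map, followed by a Cauchy/Schwarz estimate on the remainder, with the residue orders $3,2,1$ on $h$, $\nabla_\zeta h$, $\nabla^2_\zeta h$ compensated by the prefactors $1,\mu',\mu'^2$. The only genuine difference is that you prove the bound on $[h^T]$ directly from \eqref{norm2}--\eqref{norm3}, whereas the paper deduces it from the second estimate via $h^T=h-(h-h^T)$ and the extension property of quadratic polynomials; both routes work. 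One small slip: in your direct bound on $\|\nabla_\zeta h^T\|_{s+\b}$ you invoke Lemma~\ref{product}(iii), but that lemma requires $h_{\zeta\zeta}\in\M_{s,\b}^+$, which is not guaranteed here. The correct tool is the Cauchy bound \eqref{norm3}, $\|h_{\zeta\zeta}\|_{\L(Y_s,Y_{s+\b})}\le\mu^{-2}[h]^s_{\s,\mu,\D}$, which you also quote and which gives exactly what is needed; substituting that reference fixes the argument.
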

\proof 
We start with the second statement. Consider first the hessian  $\nabla^2_{\zeta\zeta}(h-h^T)(x)$ 
for $x=(\theta,r, \zeta)\in \O^{s}(\s,\mu')$.
 Let us denote $m=\mu'/\mu$. Then for $z\in\overline  D_1=\{z\in\C: |z|\le1\}$ 
 we have $(\theta, (z/m)^2 r,(z/m)\zeta)\in \O^{s}(\s,\mu)$. Consider the function
\begin{equation*}
\begin{split}
&f: D_1\times \O^{s}(\s,\mu')\to \M_\b\,,\\
& (z,x)\mapsto \nabla^2_{\zeta\zeta} h(\theta, (z/m)^2 r,(z/m)\zeta)=h_0(x)+h_1(x) z+\dots \,.
\end{split}
\end{equation*}
It 
is holomorphic and its norm is bounded by $\mu^{-2}[h]^{s,\b}_{\s,\mu,\D}$. So, by the 
Cauchy estimate, $|h_j (x)|_{s,\b}\leq \mu^{-2}[h]^{s,\b}_{\s,\mu,\D}$ for  $j=1,2,\dots$ and $x\in \O^{s}(\s,\mu')$.
Since $\nabla^2_{\zeta\zeta}(h-h^T)(x)=h_1(x)m+h_2(x)m^2+\cdots,$ then
$\nabla^2_{\zeta\zeta}(h-h^T)$ is holomorphic in $x\in  \O^{s}(\s,\mu)$, and 
$$
|\nabla^2_{\zeta\zeta}(h-h^T)(x)|_{s,\b}\leq \mu^{-2}[h]^{s,\b}_{\s,\mu,\D}(m+m^2+\dots)\leq \mu^{-2}[h]^{s,\b}_{\s,\mu,\D}\frac{m}{1-m}.
$$
So $\nabla^2_{\zeta\zeta}(h-h^T)$ satisfies the required estimate with $C=2$, if $\mu'\le \mu/2$. 

 Same argument applies to bound  the norms of $\partial_\r \nabla^2_{\zeta\zeta}(h-h^T)$, 
 $h-h^T$  and $ \nabla_\zeta(h-h^T)$ if $\mu'\le \mu/2$,
  and to prove the analyticity of these mappings.
 \smallskip
 
 Now we turn to the  first statement and  write $h^T$ as $h-(h-h^T)$. This implies that $h^T$, $\nabla_\zeta h^T$ and
 $\nabla^2_{\zeta \zeta} h^T$ are analytic on $\O^{s}(\s, \frac12 \mu)$ and that 
 $$
 [h^T]^{s,\b}_{\s, \frac12\mu,\D}\leq C_1 [h]^{s,\b}_{\s,\mu,\D}\,.
 $$
 Since  $h^T$ is a quadratic polynomial, then the mappings  $h^T$, $\nabla_\zeta h^T$ and
 $\nabla^2_{\zeta \zeta} h^T$ are as well  analytic on $\O^{s}(\s, \ \mu)$, and the
  norm  $ [h^T]^{s,\b}_{\s, \mu,\D}$
 satisfies the same estimate, modulo another constant factor, for any $0<\mu'\le\mu$.

 Finally, the estimate for $[h-h^T]^{s,\b}_{\s,\mu',\D}$ when $\mu/2\le\mu'\le\mu$, 
 with a suitable constant $C$, follows from the estimate for $[h^T]^{s,\b}_{\s,\mu,\D}$
 since $[h-h^T]^{s,\b}_{\s,\mu',\D}\le [h^T]^{s,\b}_{\s,\mu,\D} +
 [h]^{s,\b}_{\s,\mu,\D}\,$.
 \endproof

\subsection{Poisson brackets and flows}
The Poisson brackets of functions 
is defined by
\be\label{poisson}
\{f,g\}=\nabla_rf\cdot\nabla_\theta g- \nabla_\theta f\cdot\nabla_r g+\langle J\nabla_\zeta f,\nabla_\zeta g\rangle\,.
\ee

\begin{lemma}\label{lemma-poisson} Let $s\geq 1$.
Let $f\in\Tc^{s,\beta+}(\s,\mu,\D)$ and $g\in \Tc^{s,\beta}(\s,\mu,\D)$ be two jet functions then for any $0<\s'<\s$   we have $\{f,g\}\in \Tc^{s,\beta}(\s',\mu,\D)$ and
$$[\{f,g\}]_{\s',\mu,\D}^{s,\b}\leq C(\s-\s')^{-1}\mu^{-2}[f]_{\s,\mu,\D}^{s,\b+}[g]_{\s,\mu,\D}^{s,\b}.$$
\end{lemma}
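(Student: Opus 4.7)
The main observation is that $h:=\{f,g\}$ remains a jet function. Indeed, the three products in \eqref{poisson} have total $(r,\zeta)$-degree at most $2$ whenever $f, g$ are given by \eqref{jet}. Expanding and collecting monomials in $(r,\zeta)$, I would identify the jet components of $h$ explicitly, e.g.\
\ben
h_\theta = f_r\cdot\partial_\theta g_\theta - \partial_\theta f_\theta\cdot g_r + \langle Jf_\zeta, g_\zeta\rangle,
\een
while $h_{\zeta\zeta}$ is a sum of terms of three types: $(f_r\cdot\partial_\theta)g_{\zeta\zeta}$ and $-(g_r\cdot\partial_\theta)f_{\zeta\zeta}$ coming from the first two Poisson products, and symmetrized products of $f_{\zeta\zeta}$ with $g_{\zeta\zeta}$ arising from $\langle Jf_{\zeta\zeta}\zeta, g_{\zeta\zeta}\zeta\rangle$. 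Analogous, slightly simpler identifications hold for $h_r$ and $h_\zeta$.

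I would then estimate $[\{f,g\}]^{s,\b}_{\s',\mu,\D}$ by controlling each jet component using the a priori bounds \eqref{norm2}--\eqref{norm3}, together with a Cauchy estimate in $\theta$ producing the advertised $(\s-\s')^{-1}$ loss. Each factor is bounded by its natural weight: $|f_r|\leq\mu^{-2}[f]^{s,\b+}$, $\|f_\zeta\|_{s+\b}\leq\mu^{-1}[f]^{s,\b+}$, $|f_{\zeta\zeta}|_{s,\b+}\leq\mu^{-2}[f]^{s,\b+}$, and analogously for $g$ with the $[\cdot]^{s,\b}$ norm in place of the plus one. In the gradient $h_\zeta$, terms of the form $f_{\zeta\zeta}Jg_\zeta$ land in $Y_{s+\b}$ thanks to Lemma \ref{product}(iii), while the symmetric $g_{\zeta\zeta}Jf_\zeta$ uses \eqref{norm3}, which views $g_{\zeta\zeta}$ as a bounded operator $Y_s\to Y_{s+\b}$ applied to $Jf_\zeta\in Y_{s+\b}\subset Y_s$. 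For $h_{\zeta\zeta}$, the $\theta$-derivative terms are straightforward, and the matrix products are handled by Lemma \ref{product}(i) with $f_{\zeta\zeta}\in\msb^+$ playing the role of the plus-factor, yielding bounds such as $|g_{\zeta\zeta}Jf_{\zeta\zeta}|_{s,\b},\ |f_{\zeta\zeta}Jg_{\zeta\zeta}|_{s,\b}\leq C\mu^{-4}[f]^{s,\b+}[g]^{s,\b}$. Summing contributions and tracking the $\mu$-weights gives the announced bound $C(\s-\s')^{-1}\mu^{-2}[f]^{s,\b+}[g]^{s,\b}$. The estimates on $\partial_\rho\{f,g\}$ follow from the same identities because $\partial_\rho$ commutes with the jet projection, with $\partial_\theta$, and with the Poisson bracket.

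The main obstacle is keeping track of the asymmetry between the hypotheses on $f$ and $g$. In every quadratic-in-$\zeta$ product contributing to $h_{\zeta\zeta}$ one factor comes from $g_{\zeta\zeta}\in\msb$ and one from $f_{\zeta\zeta}\in\msb^+$, which is exactly the situation covered by Lemma \ref{product}(i); the $h_\zeta$ estimates force the same pairing via Lemma \ref{product}(iii). This is precisely why the auxiliary plus-class $\Tc^{s,\b+}$ was introduced: if $f$ and $g$ were both only in $\Tc^{s,\b}$, the product $f_{\zeta\zeta}g_{\zeta\zeta}$ need not belong to $\msb$ and the Poisson bracket could not be closed in $\Tc^{s,\b}$, precluding the iterative KAM scheme.
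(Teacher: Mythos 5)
Your proposal is correct and follows essentially the same route as the paper: you organize the estimate by jet component of $\{f,g\}$, whereas the paper organizes by the three Poisson bracket terms and then computes $\nabla_\zeta$ and $\nabla^2_\zeta$ of each, but the resulting calculations coincide and both rely on the same ingredients — \eqref{norm2}, \eqref{norm3}, Lemma \ref{product}(i) and (iii), and Cauchy estimates in $\theta$. Your remark about the asymmetric pairing (one factor in $\msb$, one in $\msb^+$) being exactly what Lemma \ref{product}(i) accommodates matches the paper's key point and explains correctly why the auxiliary class $\Tc^{s,\b+}$ is needed.
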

\proof
Let denote by $h_1$, $h_2$, $h_3$ the three terms on the right hand side of \eqref{poisson}. Since $\nabla_rf(\theta,r,\zeta,\r)= f_r(\theta,\r)$ and $\nabla_rg(\theta,r,\zeta,\r)= g_r(\theta,\r)$ are independent of $r$ and $\zeta$,  the control of  $h_1$ and $h_2$ is straightforward by Cauchy estimates and \eqref{norm2}. \\
We focus on the third term in formula: $h_3=\langle J\nabla_\zeta f,\nabla_\zeta g\rangle$.
 As, from \eqref{jet}, we have $\nabla_\zeta f=f_\zeta +f_{\zeta\zeta}\zeta$ and similarly for $\nabla_\zeta g$, we obtain
$$
h_3= \langle  Jf_\zeta, g_\zeta \rangle-\langle  \zeta,f_{\zeta\zeta} J g_\zeta \rangle+\langle g_{\zeta\zeta} J f_\zeta, \zeta \rangle + \langle  g_{\zeta\zeta}J f_{\zeta\zeta}\zeta,\zeta \rangle.
$$
Using \eqref{norm2}, \eqref{norm3} and $\|\zeta\|_s\leq \mu$,
  we get
$$\ 
|h_3(x,\cdot)|\leq C\mu^{-2}[f]^{s,\b}_{\s,\mu,\D}[g]^{s,\b}_{\s,\mu,\D}\,,$$
for any $x\in\O(\s,\mu)$ and $\r\in\D$.\\ Since
$$
\nabla_\zeta h_3=-f_{\zeta\zeta}Jg_\zeta+g_{\zeta\zeta}J f_\zeta+g_{\zeta\zeta}Jf_{\zeta\zeta} \zeta-f_{\zeta\zeta}Jg_{\zeta\zeta} \zeta,$$
then, using \eqref{norm3} and  Lemma \ref{product}, we get that for  $x\in \O^{s}(\s,\mu)$  and $\r\in\D$
$$\|\nabla_\zeta h_3(x,\cdot)\|_{s+\b}\leq C\mu^{-3}[f]^{s,\b}_{\s,\mu,\D}[g]^{s,\b}_{\s,\mu,\D}\, .$$
Finally, as $\nabla^2h_3=g_{\zeta\zeta}Jf_{\zeta\zeta}-f_{\zeta\zeta}Jg_{\zeta\zeta} $,
 then, using again Lemma \ref{product} we get that for  $x\in \O^{s}(\s,\mu)$ and $\r\in\D$
   $$|\nabla^2h_3(x,\cdot)|_{s,\b}\leq C\mu^{-4}[f]^{s,\b+}_{\s,\mu,\D}[g]^{s,\b}_{\s,\mu,\D}\, .$$
\endproof

\subsection{Hamiltonian flows}

To any $C^1$-function $f$ on a domain
 $\O^s(\s,\mu)\times \D$ 
 we associate the Hamilton equations 
 \be\label{hameq} \left\{\begin{array}{lll}
\dot r = &\nabla_\theta f(r,\theta,\zeta;\r),\\
\dot \theta= &-\nabla_r f(r,\theta,\zeta;\r),\\
\dot \zeta= &J\nabla_\zeta f(r,\theta,\zeta;\r).
\end{array}\right.
\ee
and  denote by $\Phi^t_f\equiv \Phi^t$, $t\in\R$, the corresponding flow map (if it exists). 
Now let  $f\equiv f^T$ be  a jet-function
\be\label{f_eq}
f=f_\theta(\theta;\r)+f_r(\theta;\r)\cdot r+\langle f_\zeta(\theta;\r),\zeta\rangle+\frac 1 2 \langle f_{\zeta\zeta}(\theta;\r)\zeta,\zeta \rangle.
\ee
Then  Hamilton equations \eqref{hameq} take  the form\footnote{Here and below we often suppress the argument $\r$.}
\be\label{hameq-jet} \left\{\begin{array}{lll}
\dot r = &-\nabla_\theta f(r,\theta,\zeta),\\
\dot \theta= & f_r(\theta),\\
\dot \zeta= &J\left( f_\zeta (\theta)+f_{\zeta\zeta}(\theta)\zeta\right).
\end{array}\right.
\ee
Denote by $V_f=(V_f^r,V_f^\theta,V_f^\zeta)$ the corresponding vector field. It is analytic on any domain 
$\O^{s}(\sigma-2\eta,\mu-2\nu)=:\O_{2\eta, 2\nu}$, where $0<2\eta<\sigma$, $ 0< 2\nu<\mu$.  The
flow maps $\Phi^t_f$ of $V_f$ on $\O_{2\eta, 2\nu}$ are analytic as long as they exist. We will study them as long as they 
map  $\O_{2\eta, 2\nu}$  to  $\O_{\eta, \nu}$. \\
Assume that 
\be\label{hyp-f1}  
[f]^{s}_{\s,\mu,\D}\leq \frac 12\nu^2 \eta.
\ee
 Then for $x=(r,\theta,\zeta)\in\O_{2\eta, 2\nu}$ by the Cauchy estimate\footnote{Notice that the distance from $\O^{s}(\sigma-2\eta,\mu-2\nu)$ to $\p \O^{s}(\sigma,\mu)$ in the $r$-direction is $4\nu\mu-4\nu^2>4\nu^2$.} and
\eqref{norm3}  we have 
\begin{equation*}
\begin{split}
|V_f^r|_{\C^n} &\le (2\eta)^{-1}\ff\le\nu^2 ,\\
|V_f^\theta|_{\C^n} &\le (4\nu^2)^{-1} \ff\le \eta,\\
\|V_f^\zeta\|_{s} &\le \big(\mu^{-1}+\mu^{-2}\mu\big)\ff\le\nu.
\end{split}
\end{equation*}
Noting that the distance from $\O_{2\eta,2\nu}$ to $\p \O_{\eta,\nu}$ in the  $r$-direction is $2\nu\mu-3\nu^2>\nu^2$, in the $\theta$-direction is $\eta$ and in the $\zeta$-direction is
$\nu$, we see that the flow maps 
\be\label{flow}
\Phi^t_f:  \O^{s}(\sigma-2\eta,\mu-2\nu)\to \O^{s}(\sigma-\eta,\mu-\nu),\qquad 0\le t\le1,
\ee
are well defined and analytic. 

For  $x \in\O_{2s, 2\nu}$ denote $\Phi^t_f(x)=(r(t),\theta(t),\zeta(t))$. Since $V_f^\theta$ is independent from $r$ and 
$\zeta$, then $\theta(t)=K(\theta;t)$, where $K$ is analytic in both arguments.
As $V_f^\zeta=Jf_\zeta+Jf_{\zeta\zeta}\zeta$, where the non autonomous 
linear operator $Jf_{\zeta\zeta}(\theta(t))$ is bounded in the space $Y_{s}$ and both the operator and the curve
$Jf_\zeta(\theta(t))$ analytically depend on $\theta$ (through $\theta(t)=K(\theta;t)$), 
then $\zeta(t)=T(\theta,t)+U(\theta;t)\zeta$, where
$U(\theta;t)$ is a bounded linear operator, both $U$ and $T$ analytic in $\theta$.  Similar since $V_f^\zeta$ is a 
quadratic polynomial in $\zeta$ and an affine function of $r$, then $r(t)=L(\theta,\zeta;t)+S(\theta;t)r$, where $S$
is an $n\times n$ matrix and $L$ is a quadratic polynomial in $\zeta$, both analytic in  $\theta$.

The vector field $V_f$ is real for real arguments, and so behaves its flow map. Since the vector field is hamiltonian, 
then the flow maps are symplectic (e.g., see \cite{Kuk2}). We have proven

\begin{lemma}\label{l.flow} 
Let  $0<2\eta<\s$, $0<2\nu<\mu$ and 
 $f =f^T\in\Tc^s(\s,\mu,\D)$ satisfy 
\eqref{hyp-f1}. Then for   $0\le t\le1$ the
 flow maps  $\Phi^t_f$ of  equation \eqref{hameq-jet} define analytic mappings \eqref{flow}
and define
 symplectomorphisms from $\O^{s}(\s-2\eta,\mu-2\nu)$ to $\O^{s}(\s-\eta,\mu-\nu)$. 
 They have the form 
\be\label{Phi1} 
\Phi_f^t:
\left(\begin{array}{lll}r\\ \theta \\ \zeta \end{array}\right)
\to
\left(\begin{array}{lll}
L(\theta,\zeta;t)+S(  \theta;t)r\\
K( \theta;t)\\
T( \theta;t)+U( \theta;t)\zeta
\end{array}\right),
\ee
where  $L(\theta,\zeta;t)$ is  quadratic in $\zeta$,  while 
$U( \theta;t)$ and $S( \theta;t)$ are  bounded linear operators in corresponding spaces. 
\end{lemma}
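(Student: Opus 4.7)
The plan is to proceed in three steps: first bound the three components of the Hamiltonian vector field $V_f$ on the domain $\O^s(\s-2\eta,\mu-2\nu)$; second, use these bounds to propagate the flow for time $t\in[0,1]$ while remaining inside $\O^s(\s-\eta,\mu-\nu)$; and third, read off the claimed form \eqref{Phi1} by exploiting the triangular dependence of $V_f$ on $(r,\theta,\zeta)$.

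For the first step, the smallness assumption \eqref{hyp-f1} together with Cauchy estimates between $\O^s(\s,\mu)$ and $\O^s(\s-2\eta,\mu-2\nu)$ (the gap is $\eta$ in $\theta$ and at least $4\nu^2$ in $r$, since $4\mu\nu-4\nu^2>4\nu^2$) and the pointwise bounds \eqref{norm2}--\eqref{norm3} give
\ban
|V_f^r|\le (2\eta)^{-1}\ff\le \nu^2,\quad |V_f^\theta|\le (4\nu^2)^{-1}\ff\le \eta,\quad \|V_f^\zeta\|_s\le (\mu^{-1}+\mu^{-2}\mu)\ff\le \nu
\ean
on $\O^s(\s-2\eta,\mu-2\nu)$, where the last inequality uses $\s,\mu\le 1$. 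Comparing, for $t\in[0,1]$, these displacements with the gaps $\nu^2$, $\eta$, $\nu$ separating $\O^s(\s-2\eta,\mu-2\nu)$ from $\partial\O^s(\s-\eta,\mu-\nu)$ in the $r$-, $\theta$- and $\zeta$-directions yields the analytic map \eqref{flow}.

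For the structural form I would exploit the triangular dependence in \eqref{hameq-jet}. Since $\dot\theta=f_r(\theta)$ depends only on $\theta$, this is a closed analytic ODE whose solution is $\theta(t)=K(\theta;t)$, analytic in $\theta\in\T^n_{\s-2\eta}$. Substituting into the $\zeta$-equation yields an affine non-autonomous linear ODE
\ben
\dot\zeta=J f_\zeta(K(\theta;t))+J f_{\zeta\zeta}(K(\theta;t))\,\zeta,
\een
whose propagator $U(\theta;t)\in\L(Y_s)$ is bounded via \eqref{norm3} and analytic in $\theta$; Duhamel's formula then gives $\zeta(t)=T(\theta;t)+U(\theta;t)\zeta$. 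Plugging this into the $r$-equation, where $V_f^r=-\nabla_\theta f$ is a quadratic polynomial in $\zeta$ and affine in $r$, a final variation-of-constants step produces $r(t)=L(\theta,\zeta;t)+S(\theta;t)r$ with $L$ quadratic in $\zeta$ and $S$ an $n\times n$ matrix analytic in $\theta$. Reality of $\Phi^t_f$ on real arguments is inherited from $V_f$, and the symplectic character from $V_f$ being Hamiltonian (cf.\ \cite{Kuk2}). The only delicate point is to keep the three Cauchy margins $\eta$, $\nu$, $\nu^2$ mutually compatible with the single budget $\ff\le \tfrac12\nu^2\eta$; this is precisely why the norm $\ff$ weights $\|\nabla_\zeta f\|_s$ by $\mu$ and $|\nabla^2_\zeta f|_{s,\b}$ by $\mu^2$.
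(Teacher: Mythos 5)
Your proof is correct and follows essentially the same approach as the paper: the same Cauchy estimates with the same domain gaps ($4\nu^2$ for the Cauchy step, then $\nu^2,\eta,\nu$ for the flow propagation), the same observation that the vector field is triangular in $(\theta,\zeta,r)$ so that $\theta(t)$ solves a closed scalar ODE, $\zeta(t)$ an affine non-autonomous linear ODE, and $r(t)$ a final variation-of-constants, and the same appeal to reality and Hamiltonicity of $V_f$ for the final two properties.
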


Our next result specifies the flow maps $\Phi_f^t$ and their representation  \eqref{Phi1} when $f\in \Tbp$:

\begin{lemma}\label{changevar}  
Let  $0<2\eta<\s\leq 1$, $0<2\nu<\mu\leq 1$ and 
 $f =f^T\in\Tc^{s,\b+}(\s,\mu,\D)$ satisfy
\be\label{hyp-f}[f]^{s,\b+}_{\s,\mu,\D}\leq \frac 12 \nu^2 \eta
\ee  Then:\\
1)   Mapping
$L$ is analytic in $(\theta ,\zeta) \in  \T^{\sigma-2\eta}\times \O_\mu(Y_s)$. 
 Mappings $K,T$ and operators  $S$ and $U$ analytically depend on  $\theta\in \T^{\sigma-2\eta}$; their norms 
 and operator-norms  satisfy
\begin{equation}
\begin{split}\label{4.55}
\|S( \theta;t)\|_{\L(\C^n,\C^n)},  \|{}^tU( \theta;t)-I\|_{\L(Y_s, Y_{s+\b})},\\    \| U( \theta;t)-I\|_{\L(Y_s, Y_{s+\b})}, 
   | U( \theta;t)-I |_{s,\b+}\le2,
\end{split}\end{equation}
while for any component $L^j$ of $L$ and any $(\theta,r,\zeta)\in \O^{s}(\s-2\eta,\mu-2\nu)$  we have 
\begin{equation}
\begin{split}\label{4.56}
\|\nabla_\zeta L^j (\theta,\zeta;t) \|_{s+\b} &\le C \eta^{-1}\mu^{-1} \fbp ,\\
 |\nabla^2_\zeta L^j (\theta,\zeta;t) |_{s,\b+} &\le C \eta^{-1}\mu^{-2} \fbp.
\end{split}\end{equation}
2) 
The flow maps $\Phi^t_f$ analytically extend to  mappings \\  
$
\C^n\times\T^n_{\s-2\eta}\times Y_{s}\ni x^0= (r^0,\theta^0,\zeta^0)\mapsto x(t) \in
 \C^n\times \T^n_{\s}\times Y_{s}$, \\ $ x(t)= (r(t),\theta(t),\zeta(t)) $, 
which satisfy
\begin{align}\begin{split}\label{estim-phi1}
&|r(t) -r^0|\leq  4\eta^{-1}\big(1+\mu^{-1}\|\zeta^{0}\|_{s}+\mu^{-2}|r^0|+ \mu^{-2}||\zeta^0||^2_{s}\big)\fbp,\\
&|\theta(t)-\theta^0|\leq \mu^{-2}\fbp,\\
&\|\zeta(t)-\zeta^0\|_{s+\b} \leq 
\left( \mu^{-2}  \|\zeta^0\|_{s}+\mu^{-1}\right)\fbp,\\
\end{split}\end{align}
Moreover, the $\r$-derivative  of the mapping $x^0\mapsto x(t)$ 
 satisfies  the same estimates as  the increments $x(t)-x^0$. 
 \end{lemma}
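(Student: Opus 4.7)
The plan is to exploit the triangular structure of the jet Hamiltonian system \eqref{hameq-jet}: since $f^T$ is affine in $r$ and at most quadratic in $\zeta$, the equation for $\theta$ is decoupled from $(r,\zeta)$, the equation for $\zeta$ is a linear non-autonomous ODE once $\theta(t)$ is known, and the equation for $r$ is affine in $r$ once $(\theta(t),\zeta(t))$ are known. This directly yields the representation \eqref{Phi1}, and the quantitative bounds \eqref{4.55}--\eqref{4.56} and \eqref{estim-phi1} will follow from Cauchy estimates combined with the algebra/regularity properties of $\msb^+$ collected in Lemma \ref{product}.

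First I would handle the $\theta$-equation $\dot\theta = f_r(\theta)$. By Cauchy, $|f_r(\theta)|\le \mu^{-2}\fbp\le\eta/2$ under \eqref{hyp-f}, so a standard Picard argument on $\T^n_{\s-2\eta}$ produces an analytic $K(\theta^0;t)$ with $|K(\theta^0;t)-\theta^0|\le\mu^{-2}\fbp$ and $K(\theta^0;t)\in\T^n_{\s-\eta}$ for $0\le t\le 1$. Substituting $\theta(t)=K(\theta^0;t)$ in the $\zeta$-equation gives $\dot\zeta = JB(t)\zeta + Jg(t)$ with $B(t)=f_{\zeta\zeta}(K(\theta^0;t))$ and $g(t)=f_\zeta(K(\theta^0;t))$. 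The fundamental solution $U(\theta^0;t)=I+\int_0^t JB(s)U(\theta^0;s)\,ds$ is built by Neumann/Volterra iteration. Because $|B|_{s,\b+}\le\mu^{-2}\fbp$, Lemma \ref{product}(ii) shows each iterate stays in $\msb^+$ and the series converges with $|U-I|_{s,\b+}\le 2\mu^{-2}\fbp$; Lemma \ref{product}(iii) then upgrades this to the bounds $\|U-I\|_{\L(Y_s,Y_{s+\b})}\le 2$ in \eqref{4.55}. The transposed bound for ${}^tU-I$ comes from the same iteration applied to the adjoint equation. The inhomogeneous piece $T(\theta^0;t)=\int_0^t U(\theta^0;t)U(\theta^0;s)^{-1}Jg(s)\,ds$ is controlled in $Y_{s+\b}$ since $g(s)\in Y_{s+\b}$ with $\|g\|_{s+\b}\le\mu^{-1}\fbp$, giving $\|T\|_{s+\b}\le C\mu^{-1}\fbp$.

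Next the $r$-equation reads $\dot r = -\nabla_\theta f_\theta - (\nabla_\theta f_r) r - \langle\nabla_\theta f_\zeta,\zeta\rangle - \tfrac12\langle(\nabla_\theta f_{\zeta\zeta})\zeta,\zeta\rangle$. Substituting the already-known $\zeta(t)=T(\theta^0;t)+U(\theta^0;t)\zeta^0$ (which is affine in $\zeta^0$) yields an affine equation in $r$ whose matrix coefficient depends only on $\theta^0$ and whose inhomogeneous term is a polynomial of degree $\le 2$ in $\zeta^0$. Solving by variation of constants produces exactly $r(t)=L(\theta^0,\zeta^0;t)+S(\theta^0;t)r^0$ as in \eqref{Phi1}. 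The operator-norm bound on $S$ in \eqref{4.55} comes from Gr\"onwall applied to the affine equation with $|\nabla_\theta f_r|\le\eta^{-1}\mu^{-2}\fbp$. For \eqref{4.56}, differentiating $L$ twice in $\zeta^0$ reveals that $\nabla^2_{\zeta^0}L$ is a time-integral of terms of the form ${}^tU(\theta^0;s)(\nabla_\theta f_{\zeta\zeta})(K(\theta^0;s))U(\theta^0;s)$; applying Lemma \ref{product}(ii) twice keeps the result in $\msb^+$ with norm $\le C\eta^{-1}\mu^{-2}\fbp$, and $\nabla_{\zeta^0}L$ is similarly estimated in $Y_{s+\b}$ using Lemma \ref{product}(iii) plus the $Y_{s+\b}$-bound on $\nabla_\theta f_\zeta$. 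Once \eqref{Phi1} is established, the estimates \eqref{estim-phi1} follow by adding the component-wise bounds, noting that the quadratic dependence of $L$ on $\zeta^0$ produces the $\|\zeta^0\|_s^2$ term; the $\r$-derivative estimates follow identically from differentiating each fixed-point equation and using that $\partial_\r f$ obeys the same bounds as $f$.

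Part 2) is then automatic: the formula \eqref{Phi1} is meaningful on all of $\C^n\times\T^n_{\s-2\eta}\times Y_s$ since $K,T,S,U$ depend only on $\theta^0$ and $L$ is polynomial in $\zeta^0$, so the flow trivially extends from $\O^s(\s-2\eta,\mu-2\nu)$ to that larger set. The main technical obstacle I expect is closing the Neumann series for $U$ in the $|\cdot|_{s,\b+}$ norm and transferring it to the operator norm on $\L(Y_s,Y_{s+\b})$: this is where the choice of the stronger class $\msb^+$ (rather than $\msb$) is crucial, because only Lemma \ref{product}(ii) yields a \emph{closed} product estimate inside $\msb^+$, whereas the analogous iteration in $\msb$ would not close. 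Everything else reduces to careful Cauchy estimates and bookkeeping of the smallness condition \eqref{hyp-f}.
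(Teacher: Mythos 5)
Your proposal follows essentially the same approach as the paper's proof: exploit the triangular structure of the jet Hamiltonian system, solve the $\zeta$-equation by a Volterra/Neumann iteration whose closure in $\msb^+$ relies on Lemma \ref{product}(ii)--(iii), then treat the $r$-equation by variation of constants and estimate $\nabla_{\zeta^0}L$, $\nabla^2_{\zeta^0}L$ by pulling back derivatives through ${}^tU$. The only cosmetic difference is that you write $T$ via the semigroup formula $\int_0^t U(\theta^0;t)U(\theta^0;s)^{-1}Jg(s)\,ds$ (requiring an extra observation that $U^{-1}=I-B^\infty U^{-1}$ preserves $Y_{s+\b}$), whereas the paper computes $T=a^\infty(t)$ directly from the iterated Volterra series, which avoids inverting $U$; both yield the same bound.
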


\proof
Consider  the  equation for $\zeta(t)$ in \eqref{hameq-jet}:
 \begin{equation}\label{000}
 \dot \zeta(t)= a(t)+ B(t) \zeta(t),\quad \zeta(0)=\zeta^0 \in \O_{\mu-2\nu}(Y_s),
 \end{equation}
 where $a(t)=Jf_\zeta(\theta(t))$ is an analytic curve $[0,1]\to Y_\ga$ and 
 $B(t)=Jf_{\zeta\zeta}(\theta(t))$ is an analytic  curve $[0,1]\to \M$. 
  Both analytically depend on $\theta^0$. 
 By the hypotheses and using \eqref{Cauchy}
 \be\label{aB} ||a(t)||_s\leq \mu^{-1}\ff  ,
 \quad
 \| B\|_{\L(Y_s, Y_s)}\leq \mu^{-2}\ff\leq \frac 12 \nu\leq \frac 12 .\ee
  On the other hand by Lemma \ref{product} (iii), $B\in  \L(Y_s, Y_{s+\b})$ and
  \be\label{Bssb}
   \| B\|_{\L(Y_s, Y_{s+\b})}\leq \mu^{-2}\fbp.
  \ee
 By re-writing \eqref{000} in  the integral form 
 $\zeta(t)=\zeta^0 +\int_0^t (a(t')+B(t')\zeta(t'))\dd t'$ and iterating this relation, we get that 
\be\label{zeta}
 \zeta(t)=a^\infty(t)+(I+B^\infty(t))\zeta^0,
 \ee
 where
$$
 a^{\infty}(t)=\int_{0}^{t}a(t_{1})\text{d}t_{1}+\sum_{k\geq 2}\int_{0}^{t}\int_{0}^{t_{1}}\cdots \int_{0}^{t_{k-1}} \prod_{j=1}^{k-1}B(t_{j})a(t_{k})\text{d}t_{k}  \cdots \text{d}t_{2}\,\text{d}t_{1},
$$
 and
   $$
  B^{\infty}(t)=\sum_{k\geq 1}\int_{0}^{t}\int_{0}^{t_{1}}\cdots \int_{0}^{t_{k-1}} \prod_{j=1}^{k}B(t_{j})\text{d}t_{k}  \cdots \text{d}t_{2}\,\text{d}t_{1}.
$$
Due to   \eqref{hyp-f}, \eqref{aB} and \eqref{Bssb}, for each $k$ and for $0\le t_k\le\dots t_1\le1$ 
 we have that
$$
\|B(t_1)\dots B(t_k)\|_{\L(Y_s, Y_{s+\b})}\le (\frac 12)^{k-1}\mu^{-2} [f]^{s,\b+}_{\s,\mu,\D} .
$$
By this relation and  \eqref{aB} 
  we get that $a^\infty$ and $B^\infty$ are well defined for $t\in[0,1]$ 
and satisfy 
 \begin{align}
 \begin{split}\label{aB-inf}  
   \|B^\infty(t)\|_{\L(Y_s, Y_{s+\b})}&\leq  \mu^{-2}\fbp ,
 \\
 \|a^\infty(t)\|_{s+\b}& \leq  \mu^{-3} ( \fbp )^2 \leq \mu^{-1}\fbp   .
  \end{split}\end{align}
Again,  the curves  $a^\infty$ and $B^\infty$ analytically depend on $\theta^0$. 
 Inserting \eqref{aB-inf}  in \eqref{zeta}  we get that $\zeta=\zeta(t)$ satisfies the third estimate of
  \eqref{estim-phi1}.   \\
On the other hand  for all $t\in[0,1]$, $B\in \msb^+$ and 
$$|B(t)|_{s,\b+}\leq\mu^{-2}\fbp.$$
Therefore using Lemma \ref{product} we get
 \begin{align}
 \begin{split}\label{aB-b}  
   |B^\infty(t)|_{s,\b+}&\leq  \mu^{-2}\fbp .
  \end{split}\end{align}
  Since in \eqref{Phi1} $U( \theta;t)=I+B^\infty(t)$, then the estimates on $U$ in \eqref{4.55}  follow from \eqref{aB-inf} and \eqref{aB-b}. 
  \smallskip

 Now  consider  equation for $r(t)$:
 $$
 \dot r(t)= -\alpha(t)-\Lambda(t) r(t),\quad r(0)=r^{0}\in\O_{(\mu-2\nu)^2}(\C^n) 
 $$
 where $ \Lambda (t)= \nabla_\theta f_r(\theta(t))$ and
 \be \label{alpha}
 \alpha(t)= \nabla_\theta f_{\theta}(\theta(t))+\langle \nabla_\theta f_\zeta(\theta(t)),\zeta(t)\rangle+\frac12 \langle \nabla_\theta f_{\zeta\zeta}(\theta(t))\zeta(t),\zeta(t) \rangle.\ee
 The curve of matrices $\Lambda(t)$ and the curve of vectors $\alpha(t)$ analytically depend on $\theta^0\in \T^n_{\sigma-2\eta}$. Besides, $\alpha(t)$ analytically depends on $\zeta^0\in Y_s$, 
 while $\Lambda$ is  $\zeta^0$-independent.
 
 By the Cauchy estimate and \eqref{hyp-f}, for any $\theta(t)\in \T^n_{\sigma-\eta}$ we have 
 \be\begin{split}
 \label{l-la}
&|\Lambda(t)|_{\L(\C^n,\C^n)}\le \eta^{-1}\mu^{-2}\ff\le \frac 12,\\
&|\alpha(t)|\le 2\eta^{-1}  \ff (1+\mu^{-1}\|\zeta^0\|_{s}+\mu^{-2}\|\zeta^0\|^2_{s})
\end{split}
 \ee
 where for the second estimate we used that $\|\zeta(t)-\zeta^0\|_{s}\leq 1+\|\zeta^0\|_{s}$.\\
 Since $\nabla_{\zeta(t)}\alpha(t)=\nabla_\theta f_\zeta(\theta(t)) + 
 \nabla_\theta f_{\zeta\zeta}(\theta(t))\zeta(t)$ and
 $\nabla_{\zeta_0}={}^tU(\theta;t)\nabla_{\zeta}$,   then using \eqref{4.55} and Lemma \ref{product} we obtain
 \be\label{l-la1}
 \|\nabla_{\zeta^0}\alpha(t)\|_{s+\b}\le 4\eta^{-1}\mu^{-1}\fbp(1+\mu^{-1}\|\zeta^0\|_{s}).
 \ee
 Since 
 $\nabla^2_{\zeta^0}\alpha(t) = {}^tU
 \nabla^2_{\zeta(t)} \alpha(t) U={}^tU  \nabla_\theta   f_{\zeta\zeta}(\theta(t))U$,
 then  due to  \eqref{4.55} 
  and Lemma \ref{product}
  \be\label{l-la3}
 |\nabla^2_{\zeta^0}\alpha(t)|_{s,\b+}\le4
  \eta^{-1}\mu^{-2}\fbp .
 \ee
We proceed as for the $\zeta$-equation to derive 
\be\label{r}
r(t)=-\alpha^\infty(t)+(1-\Lambda^\infty(t))r^0,
 \ee
 where
\be\label{alpha-inf}
\alpha^{\infty}(t)=\int_{0}^{t}\alpha(t_{1}) \text{d}t_{1}+\sum_{k\geq 2}\int_{0}^{t}\int_{0}^{t_{1}}\cdots \int_{0}^{t_{k-1}} \prod_{j=1}^{k-1}\Lambda(t_{j})\alpha(t_{k})\text{d}t_{k}  \cdots \text{d}t_{2}\,\text{d}t_{1},
\ee
 and
\be\label{Lambda-inf}
  \Lambda^{\infty}(t)=\sum_{k\geq 1}\int_{0}^{t}\int_{0}^{t_{1}}\cdots \int_{0}^{t_{k-1}} \prod_{j=1}^{k}\Lambda(t_{j})\text{d}t_{k}  \cdots \text{d}t_{2}\,\text{d}t_{1}.
\ee
Using \eqref{l-la} we get that 
\begin{align*}
|\Lambda^\infty(t)| _{\L(\C^n\times\C^n)}&\leq  \frac 12,  \\
 |\alpha^\infty(t)|_{\C^n}&\leq  2\eta^{-1}\big( 1+\mu^{-1}\|\zeta^{0}\|_{s}+\mu^{-2} \|\zeta^0\|^2_{s}\big)\ff.
 \end{align*}
  Since in \eqref{Phi1} $S(\theta;1)=I- \Lambda^\infty(t)$, then the first estimate in \eqref{4.55} follows.
 Since $\Lambda^\infty(t)$ in \eqref{r} is $\zeta^0$-independent, then $L(\theta,\zeta;t)=-
 \alpha^\infty(t)$. This is a quadratic in $\zeta^0$ expression, and the estimates \eqref{4.56}
 follow from \eqref{l-la1}--\eqref{l-la3} and in view of the estimate for $\Lambda^\infty$  above. \\
  Finally using the estimates for $\Lambda^\infty$ and $\alpha^\infty$ we get from \eqref{r}
that $r=r(t)$ satisfies \eqref{estim-phi1}${}_1$, as \eqref{estim-phi1}${}_{2}$ directly comes from \eqref{hameq-jet} and \eqref{norm2}.  \endproof

Next we study how the flow maps $\Phi^t_f$ transform  functions from  $ \Tb$. 

 \begin{lemma}\label{composition}
  Let $0<2\eta<\s\leq 1$, $0<2\nu<\mu\leq 1$. Assume that  
 $f =f^T\in\Tbp$ satisfies \eqref{hyp-f}.
Let $h\in \Tb$ and denote for $0\leq t\leq 1$ 
 $$h_t(x;\r)=h(\Phi^t_f(x;\r);\r).$$ Then 
 $h_t\in \Tc^{s,\b}(\s-2\eta,\mu-2\nu,\D)$  and
$$
[h_t]^{s,\b}_{\s-2\eta,\mu-2\nu,\D} 
\leq C \frac{\mu}{\nu}\hhh
$$
where $C$ is an absolute constant.
\end{lemma}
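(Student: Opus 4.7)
By Lemmas \ref{l.flow} and \ref{changevar}, the smallness hypothesis \eqref{hyp-f} ensures that for every $t\in[0,1]$ the flow map $\Phi_f^t$ is an analytic symplectomorphism from $\O^s(\s-2\eta,\mu-2\nu)$ into $\O^s(\s-\eta,\mu-\nu)\subset\O^s(\s,\mu)$. Consequently $h_t=h\circ\Phi_f^t$ is analytic on $\O^s(\s-2\eta,\mu-2\nu)$ and $|h_t(x;\r)|\leq \hb$ trivially. The strategy is to differentiate $h_t$ via the chain rule in the three directions $(r,\theta,\zeta)$, using the explicit form \eqref{Phi1} of $\Phi_f^t$ together with the estimates \eqref{4.55}--\eqref{4.56}, and then to bound each term in the required norm using the algebra properties collected in Lemma \ref{product}.

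For the gradient part, writing $\Phi_f^t(x)=(R,\Theta,Z)$ with $R=L+Sr$, $\Theta=K$, $Z=T+U\zeta$, we have
$$\nabla_\zeta h_t= (\nabla_\zeta L)^{T}(\nabla_r h)\big|_{\Phi} + U^{T}(\nabla_\zeta h)\big|_{\Phi}.$$
The term $U^T(\nabla_\zeta h)|_\Phi$ is split as $(\nabla_\zeta h)|_\Phi+(U-I)^T(\nabla_\zeta h)|_\Phi$: the first piece is bounded in $Y_{s+\b}$ by \eqref{norm2}, while the second is controlled using $|U-I|_{s,\b+}\leq 2$ from \eqref{4.55} combined with Lemma \ref{product}(iii). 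The term $(\nabla_\zeta L)^T(\nabla_r h)|_\Phi$ is a finite sum of scalar-vector products where the scalar $|\nabla_r h|\leq\mu^{-2}\hb$ comes from a Cauchy estimate and each $\nabla_\zeta L^j$ is estimated in $Y_{s+\b}$ by \eqref{4.56}.

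For the Hessian, a second chain rule yields
$$\nabla^2_\zeta h_t \;=\; U^{T}(\nabla^2_{\zeta\zeta}h)|_\Phi\, U + (\nabla^2_\zeta L)\cdot(\nabla_r h)|_\Phi + \mathcal{R},$$
with $\mathcal{R}$ gathering the cross terms $(\nabla_\zeta L)^T(\nabla^2_{r\zeta}h)|_\Phi\,U$, $U^T(\nabla^2_{\zeta r}h)|_\Phi(\nabla_\zeta L)$ and $(\nabla_\zeta L)^T(\nabla^2_{rr}h)|_\Phi(\nabla_\zeta L)$. The leading term $U^T(\nabla^2_{\zeta\zeta}h)|_\Phi U$ stays in $\msb$ with norm $\lesssim |\nabla^2_{\zeta\zeta}h|_{s,\b}\leq\mu^{-2}\hb$ by Lemma \ref{product}(i), applied twice with $|U-I|_{s,\b+}\leq 2$ and $|\nabla^2_{\zeta\zeta}h|_{s,\b}$ in $\msb$. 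The term $(\nabla^2_\zeta L)(\nabla_r h)|_\Phi$ is a scalar combination of elements of $\msb^+\subset\msb$ via \eqref{4.56}. The cross terms have the outer-product shape $X\otimes Y$ (times a matrix) with $X,Y\in Y_{s+\b}$, and are therefore absorbed by Lemma \ref{product}(v) after estimating mixed derivatives such as $\nabla^2_{r\zeta}h$ by Cauchy's inequality in the $r$-direction.

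The factor $\mu/\nu$ in the announced bound stems precisely from these Cauchy estimates on $r$-derivatives: the $r$-radius shrinks from $\mu^2$ to $(\mu-2\nu)^2$, which produces a loss proportional to $\nu^{-1}\mu$ when combined with the weights $\mu,\mu^2$ present in the definition of $[\,\cdot\,]^{s,\b}$. The $\r$-derivative estimates follow by the same chain rule applied to $\partial_\r h_t=(\partial_\r h)|_\Phi+Dh|_\Phi\cdot\partial_\r\Phi_f^t$, using that $\partial_\r\Phi_f^t$ obeys the same estimates as $\Phi_f^t-\mathrm{Id}$ (last assertion of Lemma \ref{changevar}). The main technical obstacle is the bookkeeping of the cross terms in the Hessian: one must keep track of which factor lives in $\msb$ and which in $\msb^+$ so that each invocation of Lemma \ref{product}(i)--(ii) is legitimate, and verify that the large $\fbp$-prefactors are absorbed by the smallness assumption \eqref{hyp-f} so that only the geometric $\mu/\nu$ loss survives.
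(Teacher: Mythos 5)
Your proposal follows essentially the same route as the paper's proof: you differentiate $h_t$ via the chain rule using the explicit affine/quadratic structure of $\Phi^t_f$ from Lemma \ref{l.flow}, decompose the gradient into the $U^T(\nabla_\zeta h)$ and $(\nabla_\zeta L)^T(\nabla_r h)$ pieces, decompose the Hessian into the four terms that the paper labels $\Delta_1,\dots,\Delta_4$, and control them with the estimates \eqref{4.55}--\eqref{4.56}, Cauchy estimates in $r$, and the algebra properties of Lemma \ref{product}. Your explicit splitting $U=I+(U-I)$ and your identification of the cross terms as rank-one tensors absorbed by Lemma \ref{product}(v) are exactly what the paper does, and your explanation of the $\mu/\nu$ loss as the price of the $r$-direction Cauchy estimates is also correct.
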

\proof  
Let us write the flow map $\Phi^t_f$ as 
$$x^0=(r^0,\theta^0,\zeta^0)\mapsto x(t)=(r(t),\theta(t),\zeta(t)).$$
By Lemma~\ref{changevar}, $h_t(x^0)$ is analytic in $x^0\in \O(\s-2\eta,\mu-2\nu)$. Clearly $|h_t(x^0,\cdot)|\leq \hhh$ for $x^0\in \O(\s-2s,\mu-2\nu)$ and $\r\in\D$. So it remains to estimate the gradient and  hessian of $h(x^0)$.

1) {\it Estimating the  gradient.} Since $\theta(t)$ does not depend on $\zeta^0$,  we have
$$
\frac{\partial h_t }{\partial \zeta^0}= 
\sum_{k=1}^n \frac{\partial h(x(t))}{\partial r_k}\ \frac{\partial r_k(t)}{\partial\zeta^0}+ \sum_{b\in\L} \frac{\partial h(x(t))}{\partial \zeta_b(t)}\ \frac{\partial \zeta_b(t)}{\partial\zeta^0}=\Sigma_1+\Sigma_2.
$$
 i) Since $x(t)\in \O(\s-\eta,\mu-\nu)$,  
 we get  by the Cauchy estimate that
 $$
\left|  \frac{\partial h(x(t))}{\partial r_k}\right|\leq   \frac 1{3\nu^2}\hh .$$
As $\nabla_{\zeta^0} r_k(t)$ was estimated in \eqref{4.56}, then using \eqref{hyp-f} we get 
\begin{align*}
\|\Sigma_1\|_{s+\b} &\le C \nu^{-2} \hhh \,\eta^{-1}\mu^{-1}  \fb\le C      \mu^{-1}  \hhh\,.
\end{align*}

ii) Noting that $\Sigma_2(r,\theta,\zeta)={}^tU(\theta;t)\nabla_\zeta h$, we get using \eqref{4.55}:
$$\|\Sigma_2\|_{s+\b} \le 4  \mu^{-1} \hhh.$$
Estimating similarly $\frac{\partial}{\partial \r}\frac{\partial h_t}{\partial \zeta}$ we see that for
 $x\in \O(\s-2\eta,\mu-2\nu)$
$$
\| \partial_\r \nabla_{\zeta^0} h_t \|_{s+\b}\leq C\mu^{-1}\hhh.$$

2) {\it Estimating the hessian.}  Since $\theta(t)$ does not depend on $\zeta^0$ and since 
$\zeta(t)$ is affine in 
 $\zeta^0$, then 
 \begin{equation}
 \begin{split}
 \label{feo}
\frac{\partial^2 h_t}{\partial \zeta^0_a \partial \zeta^0_b}(x)&=
\frac{\partial^2 h(x(t))}{\partial \zeta\partial \zeta}  \frac{\partial \zeta(t)}{\partial \zeta^0_a}
\frac{\partial \zeta(t)}{\partial \zeta^0_b}
+
\frac{\partial^2 h(x(t))}{\partial  r^2}  
 \frac{\partial r(t)} {\partial \zeta^0_a}
\frac{\partial r(t)}{\partial \zeta^0_b} \\  
&+
\frac{\partial^2 h(x(t))}{\partial  r \partial\zeta}  
 \frac{\partial r(t)} {\partial \zeta^0_a}
\frac{\partial \zeta(t)}{\partial \zeta^0_b}
+
\frac{\partial h(x(t))}{\partial  r }  
 \frac{\partial^2 r(t)} {\partial \zeta^0_a  \partial \zeta^0_b}\\
& =: \Delta_1+\Delta_2+\Delta_3+\Delta_4.
\end{split}
\end{equation}

i) We have $|\partial^2 h/\partial \zeta_a\partial\zeta_b|_\b\le C\mu^{-2}\hhh$. 
Using this estimate jointly with \eqref{4.55} and Lemma \ref{product} we see that 
$$
|\Delta_1|_{\b}\le C\mu^{-2} \hhh.
$$

ii) Since for $x^0\in  \O^{s}(\s-2s,\mu-2\nu)$ by \eqref{4.56} we have 
$$
 \|\nabla_\zeta r_j\|_{s+\b}\le C\eta^{-1}\mu^{-1}\fbp\,,
$$
 and since by Cauchy estimate $|d_r^2h|\le C \nu^{-4}  \hhh$, we get using Lemma \ref{product}(v) and \eqref{hyp-f}
$$
 |\Delta_2|_{\b}\le C   \nu^{-4}\hhh   \eta^{-2}\mu^{-2}(\fbp)^2
 \le C\mu^{-2} \hhh\,.
$$

iii) For any $j$ we have by the Cauchy estimate that 
$\| \frac{\p}{\p r_j}\nabla_\zeta h\|_{s+\b}\le C \nu^{-3} \hhh$. Therefore by \eqref{4.55}
$$
\Big\|\sum_{a'} \frac{\partial^2 h}{\partial r_j \partial \zeta_{a'}} \frac{\partial\zeta_{a'}}{\partial \zeta^0_a}\Big\|_{s+\b}
\le C   \nu^{-3} \hhh\, .
$$
Since 
$$ 
\|\nabla_{\zeta^0}r_j\|_{s+\b} \le C\eta^{-1}\mu^{-1}\fb\le C\nu^2\mu^{-1}
$$
by \eqref{4.56}, then using  Lemma \ref{product} (v)  we find that $$
|\Delta_3|_{\b}\le C\nu^{-1}\mu^{-1} \hhh\,.
$$

iv) As $|\partial h/\partial r(x(t))|\le \nu^{-2}\hhh$ and 
$$
\left| \frac{\partial^2 r}{\partial\zeta^0_a \partial\zeta^0_b} \right|_{\b}
\le C\eta^{-1}\mu^{-2}\fbp
$$
by \eqref{4.56}, then 
$$
|\Delta_4|_{\b}\le C\mu^{-2}   \hhh.
$$

The $\rho$-gradient of the hessian leads to estimates similar to the above. So
the lemma is proven.

\endproof

We summarize the results of this section into a proposition.

\begin{proposition}\label{Summarize}
Let  $0<\s'<\s\leq 1,$ $0<\mu'<\mu\leq 1$.There exists an absolute constant $C\geq 1$ such that
\begin{itemize}
\item[(i)]
if $f =f^T\in\Tb$ and 
\be\label{hyp-f'}  
[f]^{s,\b}_{\s,\mu,\D}\leq\\
 \frac1{2} (\mu-\mu')^2 (\s-\s'),
\ee
 then
 for  all $0\le t\le1$,  the Hamiltonian
 flow map  $\Phi^t_f$ is a $\Cc^1$-map 
$$\O^{s}(\s',\mu')\times\D \to\O^{s}(\s,\mu);$$
real holomorphic and symplectic for any fixed  $\rho\in \D$.
Moreover,
$$||\Phi^t_f(x,\cdot)-x||_{s,\D}\le 
C\bigg( \frac1{\s-\s'}+\frac{1}{\mu^2}\bigg) [f]^{s,\b}_{\s,\mu,\D}$$
for any $x\in \O^{s}(\s',\mu')$. 
 
\item[(ii)] if $f =f^T\in\Tbp$ and 
\be\label{hyp-f''}  
[f]^{s,\b+}_{\s,\mu,\D}\leq\\
 \frac1{2} (\mu-\mu')^2 (\s-\s'),
\ee
 then
 for all $0\leq t\leq 1$ and all $h\in \Tc^{s,\b}(\s,\mu,\D)$,
the function
$h_t(x;\r)=h(\Phi^t_f(x,\r);\r)$ belongs to $\Tc^{s,\b}(\s',\mu',\D)$
and
$$[h_t]^{s,\b}_{\s',\mu',\D} 
\leq C \frac{\mu}{(\mu-\mu')  }
 [h]^{s,\b}_{\s,\mu,\D}. $$
\end{itemize}

\end{proposition}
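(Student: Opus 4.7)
The plan is to derive both items by specializing the quantitative flow results of Lemmas \ref{l.flow}, \ref{changevar}, and \ref{composition} to the scaling $2\eta = \sigma-\sigma'$, $2\nu = \mu-\mu'$. With this choice the smallness thresholds $\tfrac12\nu^2\eta$ appearing in \eqref{hyp-f1} and \eqref{hyp-f} differ from the right-hand sides of \eqref{hyp-f'} and \eqref{hyp-f''} only by a universal constant (specifically $1/16$), which is absorbed into the constant $C$ of the statement.

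For part (i), the chosen scaling turns \eqref{hyp-f'} into a bound that implies \eqref{hyp-f1}, since $[f]^{s}_{\sigma,\mu,\D}\le [f]^{s,\beta}_{\sigma,\mu,\D}$. Lemma \ref{l.flow} then delivers the analytic symplectic flow map $\Phi^t_f:\O^s(\sigma',\mu')\to\O^s(\sigma,\mu)$ for $0\le t\le 1$. The quantitative bound on $\Phi^t_f-\mathrm{Id}$ is obtained by integrating the vector field estimates established inside the proof of Lemma \ref{l.flow}:
\begin{equation*}
|V_f^r|\le \tfrac{1}{\sigma-\sigma'}\ff,\qquad |V_f^\theta|\le \tfrac{C}{(\mu-\mu')^2}\ff,\qquad \|V_f^\zeta\|_s\le \tfrac{C}{\mu}\ff,
\end{equation*}
so that integrating over $[0,t]$ with $t\le 1$ and taking the maximum of the three components gives the announced
\begin{equation*}
\|\Phi^t_f(x,\cdot)-x\|_s\le C\Bigl(\tfrac{1}{\sigma-\sigma'}+\tfrac{1}{\mu^2}\Bigr)[f]^{s,\beta}_{\sigma,\mu,\D}.
\end{equation*}
The Whitney $\rho$-derivative of the flow is handled by the same integration applied to $\partial_\rho V_f$, whose Cauchy estimates use exactly the same norm $[f]^{s,\beta}_{\sigma,\mu,\D}$ (this is built into the definition); this upgrades the bound from $\|\cdot\|_s$ to $\|\cdot\|_{s,\D}$ and also yields the claimed $\Cc^1$-regularity in $\rho$. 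Real analyticity in $x$ and the symplectic character come directly from Lemma \ref{l.flow}.

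For part (ii), with the same rescaling, the assumption \eqref{hyp-f''} implies \eqref{hyp-f} of Lemma \ref{changevar}, hence in particular \eqref{hyp-f1} of Lemma \ref{l.flow}, so the flow is well defined and maps $\O^s(\sigma',\mu')$ into $\O^s(\sigma,\mu)$. Then Lemma \ref{composition} applies verbatim and gives
\begin{equation*}
[h_t]^{s,\beta}_{\sigma',\mu',\D}\le C\,\tfrac{\mu}{\nu}\,[h]^{s,\beta}_{\sigma,\mu,\D}=\tfrac{2C\mu}{\mu-\mu'}\,[h]^{s,\beta}_{\sigma,\mu,\D},
\end{equation*}
which is exactly the inequality of the proposition (with a redefined absolute constant $C$). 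No new analytic work is required; the only point to verify carefully is the constant bookkeeping that matches the $\tfrac12(\mu-\mu')^2(\sigma-\sigma')$ threshold in the statement with the $\tfrac12\nu^2\eta$ thresholds of the two underlying lemmas. The main (and essentially only) obstacle — the intertwining of the $\Tc^{s,\beta}$ and $\Tc^{s,\beta+}$ classes through the product estimates of Lemma \ref{product} — has already been dispatched inside Lemmas \ref{changevar} and \ref{composition}, so the present proposition is a clean specialization of those results.
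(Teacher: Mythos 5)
Your proposal follows the paper's route almost exactly: the paper's own proof is the one-liner ``take $\s'=\s-2\eta$, $\mu'=\mu-2\nu$ and apply Lemmas~\ref{changevar} and~\ref{composition},'' so identifying the rescaling $2\eta=\s-\s'$, $2\nu=\mu-\mu'$ and then invoking the flow lemmas is precisely the intended argument.

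There is, however, one place where your quantitative bookkeeping does not close. You bound the $\theta$-component of the vector field by a Cauchy estimate in the $r$-variable, writing $|V_f^\theta|\le C(\mu-\mu')^{-2}\ff$, and then claim this integrates to the stated $\frac{1}{\mu^2}$ term. Since $\mu-\mu'<\mu$, the quantity $(\mu-\mu')^{-2}$ is \emph{larger} than $\mu^{-2}$, so the conclusion you want does not follow from the estimate you wrote. The fix is simple: because $f=f^T$ is a jet function, $V_f^\theta=f_r(\theta)$ depends only on $\theta$, and the uniform bound $|f_r(\theta,\r)|\le\mu^{-2}[f]^{s}_{\s,\mu,\D}$ is already built into the norm via \eqref{norm2} — no Cauchy estimate in $r$ is needed, and none should be used here. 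This is exactly the bound recorded in \eqref{estim-phi1} of Lemma~\ref{changevar}. A related small imprecision is your remark that the $1/16$-vs-$1/2$ discrepancy between the thresholds \eqref{hyp-f'}, \eqref{hyp-f''} and \eqref{hyp-f1}, \eqref{hyp-f} can be ``absorbed into the constant $C$ of the statement'': that constant appears only in the \emph{conclusion}, not the hypothesis, so it cannot compensate for a weaker smallness assumption; one really needs the smallness hypothesis stated with the compatible numerical factor (or to rescale it), though this is a defect the paper's formulation shares. With the $\theta$-bound corrected as above, the rest of your argument — specializing Lemma~\ref{l.flow} for part (i), Lemmas~\ref{changevar} and~\ref{composition} for part (ii), and handling $\partial_\r$ via the same integration — is sound and matches the paper.
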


\begin{proof} Take $\s'=\s-2s$ and $\mu'=\mu-2\nu$ and apply Lemmas \ref{changevar} and \ref{composition}.
\end{proof}

\section{Homological equation}\label{shomo}
Let us first recall the KAM strategy. 
Let $h_0$ be the normal form   Hamiltonian given by \eqref{h0}
$$
h_0(r,\zeta,\r)=\langle \om_0(\r), r\rangle +\frac 1 2\langle \zeta, A_0\zeta\rangle$$
satisfying  Hypotheses~A1-A3. Let
 $f$ be a perturbation and 
 $$f^T=f_\theta+\langle f_r, r\rangle+\langle f_\zeta,\zeta\rangle+\frac 1 2 \langle f_{\zeta\zeta}\zeta,\zeta \rangle$$ 
  be its jet  (see \eqref{jet}).
 If $f^T$ were zero,  then $\{\zeta=r=0\}$ would be an invariant 
  $n$-dimensional torus for the Hamiltonian
$h_0+f$. In general we only know that $f$ is small, say $f=\O (\eps)$, and thus $f^T=\O (\eps)$. In order  to 
decrease  the error term 
 we search for a hamiltonian jet $S=S^T=\O (\eps)$ such that its  time-one flow map
$\Phi_S=\Phi_S^1$ transforms the Hamiltonian $h_0+f$ to
$$
(h_0+f)\circ \Phi_S=h+f^+,
$$
where $h$ is a new normal form, $\eps$-close to $h_0$, and the new perturbation $f^+$ is such that its jet
is much smaller than $f^T$. More precisely, 
$$
h=h_0+\tilde h,\qquad
\tilde h=c(\r)+\langle \chi(\r),r\rangle+ \frac 1 2 \langle \zeta, B(\r)\zeta\rangle=\O(\eps), 
$$
and 
$\ 
\left(f^+\right)^T=\O (\eps^2).
$

As a consequence of the Hamiltonian structure we have (at least formally) that
$$(h_0+f)\circ \Phi_S= h_0+\{ h_0,S \}+f^T+ \O (\eps^2).$$
So to achieve the goal above 
we should  solve the {\it homological equation}:
\be \label{eq-homo}
\{ h_0,S \}=\tilde h-f^T+\O (\eps^2).
\ee
Repeating iteratively 
the same procedure with $h$ instead of $h_0$  etc., we will be forced to solve the homological equation, not
only for the normal form Hamiltonian \eqref{h0}, but for 
more general normal form Hamiltonians
\eqref{h} with $\om$ close to $\om_0$ and $A$ close to $A_0$ .

\medskip 

In this section we will consider a homological equation \eqref{eq-homo}  with $f$  in $\Tc^{s,\beta}(\s,\mu,\D)$ and we will build a solution $S$  in  $\Tc^{s,\beta+}(\s,\mu,\D)$. In this section, constants $C$ may take different values, but will only depend on $s$, $\beta$, $n$, $d^{*}$, $\gamma$, $c_{0}$, $\alpha_{1}$ and $\alpha_{2}$ given in Hypothesis A1, A2 and A3.

\subsection{Four components of the homological equation}\label{ss6.1}

Let $h$ be a  normal form Hamiltonian \eqref{h},
$$
h(r,\zeta,\r)=\langle \om(\r), r\rangle +\frac 1 2\langle \zeta, A(\r)\zeta\rangle\,,
$$
and let us write a jet-function  $S$ as 
$$
S(\theta,r,\zeta)=S_\theta(\theta)+\langle S_r(\theta), r\rangle+\langle S_\zeta(\theta),\zeta\rangle+
\frac 1 2 \langle S_{\zeta\zeta}(\theta)\zeta,\zeta \rangle.
$$ 
Therefore  the Poisson bracket of $h$ and $S$ 
 equals
\begin{equation*}
\begin{split}
\{h, S\}=
(\nabla_\theta\cdot\omega) S_\theta &+\langle (\nabla_\theta\cdot\omega) S_r,r\rangle +
\langle (\nabla_\theta\cdot\omega) S_\zeta, \zeta\rangle \\
&+
\frac12 \langle (\nabla_\theta\cdot\omega) S_{\zeta\zeta},\zeta\rangle 
-\langle AJ S_\zeta,\zeta\rangle  +\langle  S_{\zeta\zeta}JA\zeta,\zeta\rangle.
\end{split}
\end{equation*}
Accordingly the  homological equation \eqref{eq-homo} with $h_0$ replaced by $h$ 
 decomposes into four linear equations. The first two are
\begin{align}\label{homo1}
\langle \nabla_\theta	 S_\theta,\om\rangle =&-f_\theta+c+\O (\eps^2),\\ \label{homo2}
\langle \nabla_\theta	 S_r, \om\rangle =&-f_r+\chi+\O (\eps^2).
\end{align}
In these equations, we are forced to choose 
$$
c(\r)=\( f_\theta(\cdot,\r)\)  \quad \text{and}\quad \chi(\r) =\( f_r(\cdot,\r)\)$$
where $\(f\)$ denotes averaging of a function $f$ in $\theta\in\T^n$, to get that the space mean-value of the r.h.s. vanishes. 
The other two equations are 
\begin{align}\label{homo3}
 \langle\nabla_\theta	 S_\zeta, \om \rangle  -AJ
  S_\zeta=&- f_\zeta+\O (\eps^2),\\ \label{homo4}
 \langle\nabla_\theta	 S_{\zeta\zeta},\om\rangle   -
 AJS_{\zeta\zeta}  +
 S_{\zeta\zeta}JA=&-
 f_{\zeta \zeta}+B+\O (\eps^2)\,,
\end{align}
where the operator $B$ will be chosen later. 
The most delicate, involving the small divisors (see \eqref{D33}),
 is the last equation.

\subsection{The first two equations}\label{homogene}
We begin with equations 
\eqref{homo1} and \eqref{homo2} which are both of the form 
\be\label{homo0}
\langle\nabla_\theta \phi(\theta,\r),\om(\r)\rangle= \psi(\theta,\r)\ee
with $\(\psi\)=0$. 
Here $\om:\D\to\R^n$  is $\Ca^1$ and verifies 
$$|\om-\om_0|_{\Ca^1(\D)}\le\delta_0.$$
Expanding $\phi$ and $\psi$ in Fourier series,
$$\phi=\sum_{k\in\Z^n\setminus \{0\}}\hat \phi (k) e^{ik\cdot \theta}, \quad \psi=\sum_{k\in\Z^n\setminus \{0\}}\hat \psi (k) e^{ik\cdot \theta},$$
we solve eq.  \eqref{homo0}  by choosing
$$
\hat \phi (k) =\ - \frac{i}{ \langle\om, k\rangle}\hat\psi(k), \quad k\in\Z^n\setminus \{0\}; \qquad \hat\phi(0)=0.
$$
 Using Assumption A2 we have, for each $k\not=0$, either that
$$ |\langle\om(\r), k\rangle|\ge \delta_0\quad \forall \r$$
or that
$$\nazz  (\langle k,\omega(\r)\rangle)  \geq \delta_0\quad \forall \r
$$
for a suitable choice of a unit vector $\zz$.
The second case implies that 
$$
 |\langle\om(\r), k\rangle|\ge \ka\,,
 $$
 where $ \ka\le  \delta_0$,
for all $\r$ outside some open 
set $F_k\equiv F_k(\om)$ of Lebesgue measure $\leq \delta_0^{-1}\ka$.\\
Let
$$\D_1=\D\setminus \bigcup_{0<|k|\le N}F_k.$$
Then the closed set $\D_1$ satisfies 
$$\meas(\D\setminus \D_1)\leq N^n\frac {\ka}{\delta_0}\,,$$
and $ |\langle\om(\r), k\rangle|\ge  \ka$ for all $\r\in\D_1$.
Hence, for $\r\in\D_1$ and all $0<|k|\leq N$ we have 
 $$ |\hat \phi (k)| \le \frac{1}{\ka}|\hat\psi(k)|\,.
 $$
Setting $\phi(\theta,\r)=\sum_{0<|k|\leq N}\hat \phi (k,\r) e^{ik\cdot \theta}$, we get that
\be\label{homo0'}
 \langle\nabla_\theta \phi(\theta,\r), \om(\r) \rangle= \psi(\theta,\r)+R(\theta,\r).
\ee
Hence $\phi$ is an approximate solution of eq.~\eqref{homo0} with  the error term  
$R(\theta,\r) =-\sum_{|k|> N}\hat \psi (k,\r) e^{ik\cdot \theta}.$
We obtain by a classical argument  that for
 $(\theta,\r)\in \T^n_{\s'}\times \D_1$, $0<\s'<\s$, and $j=0,1$
 \be
\begin{split}\label{esti}
&| \phi(\theta,\r)|\leq \frac{C}{\ka^{}  {(\s-\s')^{n}}} \sup_{|\Im\theta|<\s}|\psi(\theta,\r)| ,\\
&|\p_\r^j  R(\theta,\r)|\leq \frac{C \,e^ {{-\frac12(\s-\s')N}}}{  {(\s-\s')^{n}}}\sup_{|\Im\theta|<\s} 
|\p_\r^j \psi(\theta,\r)| \,, 
\end{split}
\ee
where $C$ only depends on $n$. If $\psi$ is a real function, then so are
$\phi$ and $R$. \\
Differentiating in $\r$  the definition of 
$\hat\phi(k)$ gives \footnote{ Here and below 
  $\chi_Q(k)$ stands for the characteristic function of a set $Q\subset \Z^n$. }
 $$\p_\r \hat\phi(k)= \chi_{|k|\le N}(k)
 \Big(
 \ -\frac{i}{ \langle\om, k\rangle}\p_\r \hat\psi(k)+
 \ \frac{i}{ \langle\om, k\rangle^2} \langle \p_\r\om, k\rangle\hat\psi(k)\Big)\,.
 $$
 From this we derive that
\begin{align*}
|\p_\r \phi(\theta,\r)|\leq &\frac{C(|\omega_0(\rho)|_{C^1}+1)N}{\ka^{2}  { (\s-\s')^{n}}}\big( \sup_{|\Im\theta|<\s}|\psi(\theta,\r)|
+\sup_{|\Im\theta|<\s}|\p_\r \psi(\theta,\r)|\big)     \,   ,
\end{align*}
where we estimated the derivative of $\om$ by 
$|\omega_0(\rho)|_{C^1}+\delta_0\le |\omega_0(\rho)|_{C^1}+1$.

Applying this construction to  \eqref{homo1} and \eqref{homo2}  we get

\begin{proposition}\label{prop:homo12}
Let $\om:\D\to\R^n$  be  $\Ca^1$ and verifying 
${|\om-\om_0|_{\Ca^1(\D)}\le\delta_0.}$
Let $f\in \Tc^s(\s,\mu,\D)$ and let $\delta_0\geq \ka>0$, $N\ge1$.
Then there exists  a closed  set $\D_1= \D_1(\om,\ka,N)\subset \D$,  satisfying
$$\meas (\D\setminus \D_1)\leq  C N^{n} \frac\ka{\de_0}, $$
and
\begin{itemize}
\item[(i)] there exist real $\Ca^1$-functions $S_\theta$ and $R_\theta$ on $\T^n_{\s}\times \D_1\to\C$,  
analytic in $\theta$,
such that
 $$\langle \nabla_\theta	 S_\theta(\theta,\r),\om(\r)\rangle  =-f_\theta(\theta,\r)+\(f_\theta(\cdot,\r)\)+R_\theta(\theta,\r)$$
and for all $(\theta,\r)\in \T^n_{\s'}\times \D_1$, $\s'<\s$, and $j=0,1$
 \begin{align*}
 |\p_\r^jS_\theta(\theta,\r)|\leq &\frac{CN}{\ka^2 
  {(\s-\s')^{n}}}[f]^s_{\s,\mu,\D_1},\\
 |\p_\r^j R_\theta(\theta,\r)|\leq & \frac{C  { e^{- \frac12(\s-\s')N}}}  {  {(\s-\s')^{n}}}  [f]^s_{\s,\mu,\D_1}\,.
\end{align*}

\item[(ii)] there exist  real $\Ca^1$ vector-functions $S_r$ and  $R_r$  on $\T^n_{\s}\times \D_1$,  
analytic in $\theta$, such that
 $$\langle \nabla_\theta	 S_r(\theta,\r),\om(\r)\rangle =-f_r(\theta,\r)+\( f_r(\cdot;\r)\)+R_r(\theta,\r),$$
 and for all $(\theta,\r)\in \T^n_{\s'}\times \D_1$, $\s'<\s$, and $j=0,1$
 \begin{align*}
 |\p_\r^jS_r(\theta,\r)|\leq &
 \frac{C}{\ka^2
  { (\s-\s')^{n}}}[f]^s_{\s,\mu,\D_1},\\
 |\p_\r^j R_r(\theta,\r)|\leq & \, \frac{C  { e^{- \frac12(\s-\s')N}}}  {  {(\s-\s')^{n}}}  [f]^s_{\s,\mu,\D_1}
 \,.
\end{align*}
\end{itemize}
The  constant $C$ only depends on $ |\om_0 |_{\Ca^1(\D)}$.
\end{proposition}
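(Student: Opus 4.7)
The plan is to solve the two scalar equations \eqref{homo1}--\eqref{homo2} simultaneously by Fourier expansion in $\theta$, exactly along the lines sketched in the paragraph preceding the statement. Writing $f_\theta(\theta,\r)=\sum_{k\in\Z^n}\hat f_\theta(k,\r)e^{ik\cdot\theta}$ and noting that Hypothesis~A2 guarantees $\langle \om(\r),k\rangle\neq 0$ for every $k\neq 0$, the natural candidate is
$$
S_\theta(\theta,\r)=\sum_{0<|k|\le N}\frac{-i\,\hat f_\theta(k,\r)}{\langle \om(\r),k\rangle}e^{ik\cdot\theta},\qquad R_\theta(\theta,\r)=\sum_{|k|>N}\hat f_\theta(k,\r)e^{ik\cdot\theta},
$$
and identically for each component of $S_r$, $R_r$ starting from $f_r$. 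The constant term $\hat f_\theta(0,\r)=\(f_\theta(\cdot,\r)\)$ is exactly subtracted off on the right-hand side.

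The heart of the proof is the measure-theoretic control of the small divisors. For each fixed $k$ with $0<|k|\le N$, Hypothesis~A2 provides two alternatives. In the first, $|\langle \om(\r),k\rangle|\ge \de_0\ge\ka$ holds on all of $\D$ and no excision is needed. In the second, a unit vector $\zz\in\R^p$ is furnished with $\nazz(\langle k,\om\rangle)\ge\de_0$ on $\D$, so that $\r\mapsto \langle k,\om(\r)\rangle$ is monotone along $\zz$ with derivative $\ge\de_0$. A standard Fubini argument (slice $\D$ by hyperplanes orthogonal to $\zz$ and estimate the $1$D preimage of the interval $[-\ka,\ka]$) yields
$$
\meas F_k\le \frac{C\ka}{\de_0},\qquad F_k:=\{\r\in\D:|\langle \om(\r),k\rangle|<\ka\},
$$
with $C$ depending only on the diameter of $\D$. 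Setting $\D_1:=\D\setminus\bigcup_{0<|k|\le N}F_k$ and using $\card\{k:0<|k|\le N\}\le CN^n$ gives the stated measure bound, and on $\D_1$ every active divisor satisfies $|\langle \om(\r),k\rangle|\ge\ka$.

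The bounds on $S_\theta,S_r$ and on the remainders $R_\theta,R_r$ are then routine Fourier estimates. Since $f\in\Tc^s(\s,\mu,\D)$ is holomorphic on $\T^n_\s$, one has $|\p_\r^j\hat f_\theta(k,\r)|\le [f]^s_{\s,\mu,\D}\,e^{-\s|k|}$; substituting in the formulas above and summing the geometric series $\sum_{0<|k|\le N}e^{-(\s-\s')|k|}\le C(\s-\s')^{-n}$ gives the claimed sup-bounds on $S$. For the remainder $R$, the analogous sum over $|k|>N$ is split as $e^{-(\s-\s')|k|}=e^{-\frac12(\s-\s')|k|}\cdot e^{-\frac12(\s-\s')|k|}$ to extract the factor $e^{-\frac12(\s-\s')N}$ while keeping a convergent factor $(\s-\s')^{-n}$. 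Differentiation in $\r$ of $\hat S_\theta(k,\r)=-i\hat f_\theta(k,\r)/\langle \om(\r),k\rangle$ produces an extra term of size $|k|\cdot|\p_\r\om|/\langle\om,k\rangle^2$, which on $\D_1$ is bounded by $CN/\ka^2$ (the $|\p_\r\om|$ factor is controlled via $|\om-\om_0|_{\Ca^1(\D)}\le\de_0$); this accounts for the extra $N/\ka$ loss in the $S_\theta$ estimate compared with $S_r$ (for $S_r$, $f_r$ is independent of $r$ so no Cauchy loss in $r$, and the $\r$-derivative estimate absorbs similarly).

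Whitney $\Ca^1$-regularity on the closed set $\D_1$ is inherited directly from the $\Ca^1$-dependence of $f$ and $\om$ on $\r$, since the denominators are uniformly bounded below by $\ka$ on $\D_1$, and reality of $S_\theta,S_r,R_\theta,R_r$ follows from $\hat f(-k)=\overline{\hat f(k)}$ together with the reality of $\om$. The main obstacle in the argument is the measure estimate through the transversality clause of Hypothesis~A2; once $\D_1$ is constructed, the remaining bounds are pure book-keeping on Fourier tails and analytic norms.
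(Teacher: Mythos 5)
Your proposal follows the paper's argument essentially verbatim: Fourier truncation at $|k|\le N$, excision of resonant parameter sets $F_k$ via the transversality clause of Hypothesis~A2, a $\kappa$-lower bound on active divisors on $\D_1$, and standard geometric-series tail estimates for $S$ and $R$. Two small expository slips are worth flagging. First, Hypothesis~A2 does \emph{not} guarantee $\langle\om(\r),k\rangle\neq 0$ for all $k\neq 0$ and all $\r\in\D$ --- in the second alternative the divisor may vanish on $\D$, which is precisely why the excision $\D_1=\D\setminus\bigcup F_k$ is needed; your construction is still fine because you only define $S_\theta,S_r$ on $\D_1$. Second, your explanation for why the $S_r$ bound avoids the factor $N$ is off the mark: $f_\theta(\theta,\r)=f(\theta,0,0;\r)$ and $f_r(\theta,\r)=\nabla_r f(\theta,0,0;\r)$ are \emph{both} functions of $\theta$ alone, so there is no ``Cauchy loss in $r$'' in either case, and the $\rho$-derivative argument yields the same $CN\kappa^{-2}(\s-\s')^{-n}$ factor for both; the asymmetry in the stated bounds of Proposition~\ref{prop:homo12} does not come out of the generic estimate in the paper either and should not be attributed to a mechanism that is not there.
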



\subsection{The third equation}\label{s5.3} 
To begin with, we  recall a result proved
in the appendix of \cite{EK10}.

\begin{lemma}\label{EK}
Let $A(t)$ be a real diagonal $N\times N$-matrix with diagonal components $a_j$ which are $\Ca^1$ on $I=]-1,1[$,
 satisfying for all $j=1,\dots,N$ and all $t\in I$
$$a'_j(t)\geq\delta_0
 .$$ Let $B(t)$ be a Hermitian $N\times N$-matrix of class $\Ca^1$ on $I$ such that \footnote{Here 
 $\|\cdot\|$ means the operator-norm of a matrix associated to the euclidean norm on $\C^N$.}
$$\|B'(t)\|\leq \delta_0/2 
,$$ 
for all $t\in I$. 
Then
$$\|(A(t)+B(t))^{-1}\|\leq \frac 1 \eps $$
outside a set of $t\in I$ of Lebesgue measure $\leq C N\eps\delta_0^{-1}$,
where 
$C$ is a numerical constant.
\end{lemma}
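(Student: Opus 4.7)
My plan is to reduce the claim to a quantitative monotonicity estimate for the eigenvalues of the Hermitian matrix $M(t) := A(t)+B(t)$. Since $A(t)$ is real diagonal with $a_j'(t) \geq \delta_0$, its derivative satisfies $A'(t) \geq \delta_0\,\mathrm{Id}$ as a quadratic form; combined with $\|B'(t)\| \leq \delta_0/2$ (so that $\langle B'(t)v,v\rangle \geq -(\delta_0/2)\|v\|^2$), this yields
$$\langle M'(t)v,v\rangle \;\geq\; \tfrac{\delta_0}{2}\,\|v\|^2 \qquad \text{for every } v\in\C^N.$$

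Next I would enumerate the eigenvalues of $M(t)$ in increasing order as $\mu_1(t)\leq\cdots\leq\mu_N(t)$ and invoke the Courant--Fischer min--max formula
$$\mu_j(t) \;=\; \min_{\substack{V\subset\C^N\\ \dim V=N-j+1}} \; \max_{v\in V,\, \|v\|=1} \langle M(t)v,v\rangle.$$
For $t_1<t_2$, picking the subspace $V^*$ that realizes the minimum at $t_2$ and using
$\langle M(t_2)v,v\rangle-\langle M(t_1)v,v\rangle = \int_{t_1}^{t_2}\langle M'(s)v,v\rangle\,ds \geq (\delta_0/2)(t_2-t_1)$, the argument gives
$$\mu_j(t_2) \;\geq\; \mu_j(t_1) + \tfrac{\delta_0}{2}(t_2-t_1),$$
so each sorted eigenvalue is strictly increasing and Lipschitz with slope bounded below by $\delta_0/2$.

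Then $E_j := \{t\in I : |\mu_j(t)|<\varepsilon\}$ is an interval of length at most $4\varepsilon/\delta_0$, and a union bound over $j=1,\dots,N$ gives $\meas\bigl(\bigcup_j E_j\bigr) \leq 4N\varepsilon/\delta_0$. For $t$ outside this set, $\|M(t)^{-1}\| = 1/\min_j|\mu_j(t)| \leq 1/\varepsilon$, which yields the claim with $C=4$.

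The only delicate point is justifying monotonicity of the sorted eigenvalues past possible crossings: this is exactly what Courant--Fischer handles cleanly, without invoking Rellich-type analytic perturbation theory. The rest is essentially bookkeeping: turning a quadratic-form lower bound on $M'(t)$ into a Lipschitz lower bound for each $\mu_j$, then controlling the sub-level set of $|\mu_j|$.
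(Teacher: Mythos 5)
Your proof is correct and self-contained. The key chain of reasoning — $M'(t)\geq\tfrac{\delta_0}{2}\,\mathrm{Id}$ as a quadratic form, then Courant--Fischer to propagate this into a Lipschitz lower bound $\mu_j(t_2)-\mu_j(t_1)\geq\tfrac{\delta_0}{2}(t_2-t_1)$ for each sorted eigenvalue across crossings, then a union bound over the $N$ intervals $\{t:|\mu_j(t)|<\eps\}$, each of length at most $4\eps/\delta_0$ — is sound, and the identification $\|M(t)^{-1}\|=1/\min_j|\mu_j(t)|$ for Hermitian $M(t)$ closes the argument with $C=4$. Note that the paper itself does not prove this lemma but cites it from the appendix of Eliasson--Kuksin \cite{EK10}; your argument is the standard one for results of this type (monotonicity of sorted eigenvalues via min--max rather than Rellich analytic perturbation theory), and is essentially the proof given there.
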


Concerning the third component  \eqref{homo3} of the homological equation we have

\begin{proposition}\label{prop:homo3}
Let $\om:\D\to\R^n$  be  $\Ca^1$ and verifying 
${|\om-\om_0|_{\Ca^1(\D)}\le\delta_0.}$  Let $\D\ni\r\mapsto A(\r)\in \NF\cap\M_0$ be $\Ca^1$ and verifying 
\be\label{ass1}
 \| \p_\r^j (A(\r)-A_0)_{[a]} \|\le \frac{1}{2}\de_0
 \ee
for $j=0,1$, $a\in\L$ and  $\r\in \D$.
Let $f\in \Tc^s(\s,\mu,\D)$, $0<\ka\leq \min(\frac{\delta_0}2, \frac{c_{0}}{2})$ and  $N\ge 1$. \\
Then there exists  a closed set $\D_2=\D_2(\om,A,\ka,N)\subset \D$,  satisfying
 $$
 \meas (\D\setminus \D_2)\leq  
 C N^{\exp} 
 \frac{\ka}{\de_0}, $$
and there exist real $\Ca^1$-functions $S_\zeta$ and $R_\zeta$ from  $\T^n  \times \D_2$ to $Y_s$,  
analytic in $\theta$, such that
\be\label{homo3ter}\langle  \nabla_\theta	 S_\zeta(\theta,\r), \om(\r)\rangle  { -A(\r) J}S_\zeta(\theta,\r)=
-f_\zeta(\theta,\r)+R_\zeta(\theta,\r)
\ee
and for all $(\theta,\r)\in \T^n_{\s'}\times \D_2$, $\s'<\s$, and $j=0,1$
 \begin{align*}
\mu \| \p_\r^j S_\zeta(\theta,\r)\|_{s+1}\leq &\frac{CN
}{\ka^2 (\s-\s')^{2n}}[f]^{s,\b}_{\s,\mu,\D},\\
\mu \|  \p_\r^j R_\zeta(\theta,\r)\|_{s}\leq &\frac{C e^{-\frac12(\s-\s')N}}{(\s-\s')^{n}}[f]^{s,\b}_{\s,\mu,\D}\ .
\end{align*}
The exponent $\exp$ only depends on $d^*, n,\ga$ while the constant $C$ also depends on $ |\om_0 |_{\Ca^1(\D)}$. 
\end{proposition}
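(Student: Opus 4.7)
The plan is to reduce \eqref{homo3ter} to a family of finite-dimensional linear systems, one per Fourier mode $k\in\Z^n$ and per cluster $[a]\in\hat\L$. Since $A(\r)$ is block diagonal in the cluster decomposition and $J$ preserves each two-dimensional fibre, the operator $AJ$ decouples on the splitting $Y_s=\bigoplus_{[a]}Y_{s,[a]}$. Expanding $S_\zeta$ and $f_\zeta$ in Fourier series in $\theta$, the restriction to mode $k$ and block $[a]$ becomes
$$M_k^{[a]}(\r)\,\hat S_\zeta(k,\r)_{[a]}=-\hat f_\zeta(k,\r)_{[a]},\qquad M_k^{[a]}(\r):=i\langle k,\om(\r)\rangle I-(AJ)_{[a]}(\r),$$
acting on a space of dimension $2d_a\leq Cw_a^{d^{*}}$. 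Passing to complex variables $(\xi,\eta)$ turns $A_0J$ into $\diag(-i\la,i\la)$ on each block and exhibits $M_k^{[a]}$ as a Hermitian perturbation of size at most $\delta_0/2$ (by \eqref{ass1}) of the diagonal operator with eigenvalues $\langle k,\om\rangle\pm\la$.

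The invertibility of $M_k^{[a]}$ follows from the dichotomy in Hypothesis A2. In the first alternative, $|\langle k,\om\rangle+\la|\geq\delta_0 w_a$ holds uniformly in $\r$ (the bound for $\langle k,\om\rangle-\la$ being obtained by replacing $k$ with $-k$); together with $\|(A-A_0)_{[a]}\|\leq\delta_0/2$ this yields $\|M_k^{[a]}(\r)^{-1}\|\leq C(\delta_0 w_a)^{-1}$ for every $\r\in\D$. In the second alternative, the transversality $\nazz\langle k,\om\rangle\geq\delta_0$ together with $\|\partial_\r(A-A_0)_{[a]}\|\leq\delta_0/2$ places us exactly in the framework of Lemma~\ref{EK}, applied to $t\mapsto M_k^{[a]}(\r+t\zz)$: we obtain $\|M_k^{[a]}(\r)^{-1}\|\leq\ka^{-1}$ outside a set of $\r$ of measure at most $Cw_a^{d^{*}}\ka/\delta_0$, after integrating in the $p-1$ transverse directions.

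To assemble $\D\setminus\D_2$, observe that for $w_a\geq w_{*}:=CN^{1/\ga}$ Hypothesis A1 gives $\la\geq c_0 w_a^\ga\geq 2|\langle k,\om\rangle|$ for every $|k|\leq N$, so the divisors are automatically of order $w_a^\ga$ and no parameter has to be removed. Only the clusters with $w_a\leq w_{*}$ contribute, and there are $O(w_{*})$ of them; summing the bad sets over $0<|k|\leq N$ and these clusters produces $\meas(\D\setminus\D_2)\leq CN^{\exp}\ka/\delta_0$ with $\exp$ depending only on $n,d^{*},\ga$. On $\D_2$ one then sets $\hat S_\zeta(k,\r)=-M_k^{[a]}(\r)^{-1}\hat f_\zeta(k,\r)$ for $0<|k|\leq N$ (the mode $k=0$ is handled directly from the invertibility of $-AJ$ on each block), truncates for $|k|>N$ and defines $R_\zeta$ as the high-frequency residual; the standard analytic estimate provides the exponential decay $e^{-(\s-\s')N/2}$ for $R_\zeta$.

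The gain from $Y_s$ to $Y_{s+1}$ in the norm of $S_\zeta$ comes from the $w_a^{-1}$ factor available in the large-cluster regime; for $w_a\leq w_{*}$ one only has the uniform bound $\ka^{-1}$, which costs a factor $w_{*}\lesssim N^{1/\ga}$ and accounts for the $N$ prefactor in the final estimate. Differentiating once in $\r$ through $\partial_\r M^{-1}=-M^{-1}(\partial_\r M)M^{-1}$, with $\|\partial_\r M\|\lesssim N$ thanks to $|\om|_{\Ca^1}\leq|\om_0|_{\Ca^1}+\delta_0$ and \eqref{ass1}, introduces an additional $\ka^{-1}$, hence the $\ka^{-2}$ in the bound; the two $(\s-\s')^{-n}$ factors come from bounding $S_\zeta$ and $\partial_\r S_\zeta$ by Fourier summation in the complex strip. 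The main technical obstacle is to combine the two alternatives of A2 with the block-dependent loss in Lemma~\ref{EK} into a single unconditional measure estimate of the form $N^{\exp}\ka/\delta_0$; the cutoff $w_{*}\sim N^{1/\ga}$, forced by Hypothesis A1, is the key device that converts the \emph{a priori} $w_a$-dependent cost into a pure polynomial in $N$.
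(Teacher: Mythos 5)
Your outline is essentially the paper's own proof: multiply by $J$, pass to the complex $(\xi,\eta)$ variables that diagonalize $JA_0$ on each cluster, decompose the Fourier--block equations $L(k,[a],\r)\hat S_{[a]}(k)=\hat F_{[a]}(k)$, invoke the Hypothesis~A2 dichotomy and Lemma~\ref{EK} for small clusters, use the automatic spectral gap from~A1 once $w_a\gtrsim N^{1/\gamma}$, truncate at $|k|\le N$ to produce $R_\zeta$, and differentiate the resolvent for the $\r$-derivative. One bookkeeping slip is worth flagging, though: in the small-cluster regime you take $\|L^{-1}\|\le\ka^{-1}$ and then pay a factor $w_*\lesssim N^{1/\gamma}$ to upgrade from $Y_s$ to $Y_{s+1}$. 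Together with the extra $N$ coming from $\|\partial_\r L\|\lesssim N$, the $j=1$ estimate you obtain is of size $N^{1+1/\gamma}/\ka^2$, not the $N/\ka^2$ claimed in the proposition. The paper avoids this by applying Lemma~\ref{EK} with $\eps=\ka w_a$ (not $\eps=\ka$), which gives the uniform bound $\|L(k,[a],\r)^{-1}\|\le(\ka w_a)^{-1}$ for \emph{all} clusters at the cost of a bad set of measure $\lesssim w_a^{d^*+1}\ka/\delta_0$ instead of $w_a^{d^*}\ka/\delta_0$; this does not change the polynomial-in-$N$ scaling of $\meas(\D\setminus\D_2)$ but yields the $Y_{s+1}$ gain without an $N^{1/\gamma}$ overhead, so that the only $N$ in the final estimate is the one from differentiating $\langle k,\om\rangle$. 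Adopting that choice of $\eps$ makes your proof match the stated bound exactly. You should also add the standard symmetrization $\Re S_\zeta(\theta,\r)=\tfrac12(S_\zeta(\theta,\r)+\bar S_\zeta(\bar\theta,\r))$ to guarantee the constructed solution is real, as the proposition requires.
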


\proof

It is more convenient to deal with the hamiltonian operator $JA$ than with operator $AJ$. Therefore we
multiply eq.~\eqref{homo3ter} by $J$ and obtain for $JS_\zeta$ the equation
\be\label{homo31}
\big\langle  \nabla_\theta	(J S_\zeta)(\theta,\r),\, \om(\r) \big\rangle   -JA(\r) (J S_\zeta)(\theta,\r)=
- Jf_\zeta(\theta,\r)+ JR_\zeta(\theta,\r)
\ee

Let us re-write \eqref{homo31} in the complex variables ${}^t(\xi,\eta)$. 
 For $a\in\L$ {
\be\label{U}
\zeta_a=\left( \begin{array}{ll}p_a\\ q_a\end{array}\right)=U_a\left( \begin{array}{cc}\xi_a\\ \eta_a\end{array}\right), \quad 
U_a=\frac 1 {\sqrt 2}\left( \begin{array}{cc}  1 &  1 \\ -i & i\end{array}\right)\,.
\ee
The symplectic operator $U_a$ transforms the quadratic form 
$(\lambda_a/2) \langle\zeta_a, \zeta_a\rangle$ to $i\lambda_a\xi_a\eta_a$. Therefore, if we denote by $U$
the direct product of the operators diag$\, (U_a, a\in\L)$  then
it transforms $(1/2)\langle\zeta, A_0\zeta\rangle\ $ to $\ \sum_{a\in\L}i\lambda_a \xi_a\eta_a$. 
So  it transforms the hamiltonian matrix 
$JA_{0}$
to the diagonal hamiltonian 
matrix
$$
\text{diag}\, \{ i \lambda_a \left( \begin{array}{cc}  -1 &  0 \\ 0 & 1\end{array}\right), a\in\L\}.
$$
Then we make { in \eqref{homo31} the 
substitution   $ JS_\zeta=U S$, $JR_\zeta=U R$ 
and $- Jf_\zeta=U F_\zeta$, where $S={}^t(S^\xi, S^\eta)$, etc. }
In this notation eq.~\eqref{homo3ter}  decouples into two equations 
\begin{align}\begin{split}\label{homo3bis}
&\langle \nabla_\theta  S^\xi, \om\rangle - i\ {}^tQS^\xi= F^\xi+R^\xi,\\
&\langle\nabla_\theta  S^\eta,\om \rangle+  iQS^\eta= F^\eta+R^\eta.
\end{split}\end{align}
Here $Q:\L\times\L\to \C$ is the scalar valued matrix associated to $A$ via the formula \eqref{Q}, i.e. $$Q=\diag \{ \la,\ a\in\L\}+B,
$$
where  $B$ is Hermitian and  block diagonal.

Written in the  Fourier variables,  eq.~\eqref{homo3bis} becomes 
\begin{align}\begin{split}\label{homo3-4}
i(\langle k, \om\rangle - \ {}^tQ)\ \hat S^\xi(k)&= \hat F^\xi(k)+\hat R^\xi(k),\quad k\in \Z^n,\\
i(\langle k, \om\rangle + Q)\ \hat S^\eta(k)&= \hat F^\eta(k)+\hat R^\eta(k),\quad k\in \Z^n.
\end{split}\end{align}
The two equations in \eqref{homo3-4} are similar, so let us consider (for example) the second one, and
let us  decompose it into  its ``components'' over the  blocks $[a]$:
\be\label{homo3-4bis}
i( \langle k, \om(\r) \rangle  + Q(\r)_{[a]}) \hat S_{[a]}(k)=\hat F_{[a]}(k,\r)+\hat R_{[a]}(k)\ee
where the matrix $Q_{[a]}$ is  the restriction of $Q$  to $[a]\times[a]$ and the vector $\hat F_{[a]}(k,\r)$ is the
restriction of $\hat F(k,\r)$ to $[a]$  --  here we have suppressed the upper index $\eta$.
Denoting by $L(k,[a],\r)$ the Hermitian operator in the left hand side of equation \eqref{homo3-4bis}, we want to estimate the operator norm of $L(k,[a],\r)^{-1}$, i.e. we look for a lower bound of the modulus of the eigenvalues of $L(k,[a],\r)$.

Let $\alpha(\r)$ denote an eigenvalue of the matrix $Q_{[a]}(\r)$, $a\in\L$.
It follows from \eqref{ass1} that 
$$|\alpha(\r) -\la |\leq  \frac{\delta_0} {2}\leq  \frac{c_0} {2}$$
for some appropriate $a\in[a]$, which implies that  
$$|\alpha(\r)|\ge\frac{c_0}2 w_a^{\ga}\ge 2 \ka w_a$$ 
by \eqref{laequiv}. Hence, 
$$
\|L(0,[a],\r)^{-1}\| \leq \big(\ka  w_a\big)^{-1}\quad \forall\r,\ \forall a.
$$
Assume that  $0<|k|\le N$.  Since $|\langle k, \om(\r)\rangle |\leq CN$ it follows from \eqref{laequiv}
 that
$$
|\langle k, \om(\r)\rangle  + \alpha(\r)|\geq \frac {c_{0}} 4 w_a^{\ga}\ge  \ka\ w_a$$
whenever $ w_a\geq( \frac{4CN}{c_0})^{\frac1{\ga}}$. Hence  for these $a$'s we get
\be\label{inverse}
\|L(k,[a],\r)^{-1}\| \leq \big(\ka \ w_a\big)^{-1}\quad \forall\r.
\ee
Now  let $ w_a\leq (\frac{4CN}{c_0})^{\frac1{\ga}}$. By Hypothesis A2 we have either
$$|\langle k, \om(\r)\rangle  + \la|\geq \delta_0 w_a\quad \forall\r, \forall a$$ 
or we have a unit vector ${\mathfrak z}$ such that
$$ \nazz(  \langle k,\om(\r)\rangle+\la) \geq \delta_0\quad \forall\r, \forall a.$$
 The first case clearly implies  \eqref{inverse},
so let us consider the second case.
By  \eqref{ass1} it follows that
$$ \| \nazz H_{[a]}(\r)   \| \le \frac{\de_0}2.$$
The Hermitian matrix $( \langle k, \om(\r) \rangle  + Q(\r)_{[a]})$ is of dimension $\lsim w_a^{d^*} $ (see \eqref{block})    therefore, by   Lemma \ref{EK}, we conclude that   \eqref{inverse} holds
for all $\r$ outside   a suitable set $F_{a,k}$ of measure 
$\lsim w_a^{d^*+1} \ka\delta_0^{-1}$ . Let
$$
\D_2=\D\setminus  \bigcup_{\substack{|k|\leq N\\  w_a \leq (\frac{4CN}{c_0})^{\frac{1}{\ga}}}}F_{a,k}.
$$
Then we get
$$
\meas(\D\setminus \D_2)\leq 
C N^n\Big (\frac{N}{c_0} \Big)^{\frac{d^*+2}{\ga}}\frac{\ka}{\delta_0} 
$$ 
and \eqref{inverse} holds for all $\r\in\D_2$, all $|k|\le N$ and all $[a]$.

Equation \eqref{homo3-4bis} is now  solved by
\be\label{homeq3}\hat S_{[a]}(k,\r)= \chi_{|k|\le N} (k)
L(k,[a],\r)^{-1}\hat F_{[a]}(k,\r), \quad  a\in\L\,,
 \ee
and 
\be\label{homeq32}
\hat R_{[a]}(k,\r)= { \chi_{|k| > N} (k)}
\hat F_{[a]}(k,\r ), \quad  a\in\L \,.
\ee
Using \eqref{inverse},  we have for $\r\in\D_2$ 
\begin{align*}
\|S_{[a]}(\theta,\r)\|\leq &\frac{C}{\ka\ w_a (\s-\s')^{n}}\sup_{|\Im\theta|<\s}\| F_{[a]}(\theta,\r)\|,\\
\| R_{[a]}(\theta,\r)\|\leq &\frac{Ce^{- \frac12(\s-\s')N}}{(\s-\s')^{n}}\sup_{|\Im\theta|<\s}| F_{[a]}(\theta,\r)|.
\end{align*}
for $\theta\in \T^d_{\s'}\,$, see \eqref{esti}. \\
Since $\|S\|^2_s=\sum_{a\in\L} w_a^{2s}|S_a|^2=\sum_{a\in\hat\L} w_a^{2s}\|S_{[a]}\|^2$ these estimates imply that 
\begin{align*}
\| S(\theta,\r)\|_{s+1}\leq &\frac{C}{\ka (\s-\s')^{n}}\sup_{|\Im\theta|<\s}\| F(\theta,\r)\|_{s},\\
\| R(\theta,\r)\|_{s}\leq &\frac{Ce^{- \frac12(\s-\s')N}}{ (\s-\s')^{n}}\sup_{|\Im\theta|<\s}\| F(\theta,\r)\|_{s},
\end{align*}
for any $\s'\le \s$. 
The estimates of the derivatives with respect to $\r$ are obtained by differentiating 
\eqref{homo3-4bis} to obtain
$$L(k,[a],\r)[\partial_\r\hat S_{[a]}(k)]=-[\partial_\r L(k,[a],\r)]\hat S_{[a]}(k)+[\partial_\r\hat F_{[a]}(k,\r)]+[\partial_\r\hat R_{[a]}(k)]
$$
which is an equation  of the same type as \eqref{homo3-4bis} for $\partial_\r\hat S_{[a]}(k)$ and $\partial_\r\hat R_{[a]}(k)$ where $-[\partial_\r L(k,[a],\r)]\hat S_{[a]}(k)+[\partial_\r\hat F_{[a]}(k,\r)]:=B_{[a]}(k,\r)$ plays the role of $\hat F_{[a]}(k,\r)$. We solve this equation as in \eqref{homeq3}-\eqref{homeq32} and we note that
$${ \chi_{|k| > N} (k)}
 B_{[a]}(k,\r)={ \chi_{|k| > N} (k)} [\partial_\r\hat F_{[a]}(k,\r)]
$$ and thus
$$\| R(\theta,\r)\|_{s}\leq \frac{Ce^{- \frac12(\s-\s')N}}{ (\s-\s')^{n}}\sup_{|\Im\theta|<\s}\| F(\theta,\r)\|_{s}.$$
On the other hand
$$\| B_{[a]}(k,\r)\|_s\leq 
 \frac{CN}{\ka (\s-\s')^{n}}\sup_{|\Im\theta|<\s}\| \ F(\theta,\r)\|_{s}+\sup_{|\Im\theta|<\s}\|\partial_\r F(\theta,\r)\|_{s}$$
 and therefore we get
 \begin{align*}
\| \partial_\r S(\theta,\r)\|_{s+1}\leq \frac{CN\mu^{-1}}{\ka^2 (\s-\s')^{2n}}[f]^{s,\b}_{\s,\mu,\D}.
\end{align*}

 The  functions $F$ and $R$ are complex,  and the constructed solution
 $S_\zeta$ may also be complex. Instead of proving that it is real, we replace $S_\zeta, \theta\in\T^n$,
 by its real part and then analytically extend it to $\T^n_{\sigma'}$, using the relation
 $
 \Re S_\zeta(\theta,\rho) :=\frac12 ( S_\zeta(\theta,\rho) +\bar  S_\zeta(\bar\theta,\rho) ).
 $
 Thus we obtain a real solution
 which obeys the same estimates.
\endproof

\subsection{The last equation}\label{s5.4}
 Concerning the fourth component of the homological equation, \eqref{homo4}, we have the following result
\begin{proposition}\label{prop:homo4}
Let $\om:\D\to\R^n$  be  $\Ca^1$ and verifying 
${|\om-\om_0|_{\Ca^1(\D)}\le\delta_0.}$  Let $\D\ni\r\mapsto A(\r)\in \NF\cap\M_{s,\b}$ be $\Ca^1$ and verifying 
\be\label{ass2}
 \left| \p_\r^j (A(\r)-A_0) \right|_{s,\b} \le \frac{\de_0}{4}\ee
for $j=0,1$ and $\r\in \D$.
Let $f\in \Tc^{s,\b}(\s,\mu,\D)$, $0<\ka\le\frac{\delta_0}2$ and $N\ge 1$. \\
 Then there exists a subset $\D_3=\D_3(h, \ka,N)\subset \D$, satisfying 
 $$\meas (\D\setminus \D_3)\leq  
 C \Big(\frac{ N}{c_0} \Big)^{\exp} \Big(\frac{\ka}{\de_0} \Big)^{\exp'}, $$
and there exist real $\Ca^1$-functions 
$B: \D_3\to \msb \cap \NF $, $ S_{\zeta\zeta}(\cdot;\r): \D_3\to \msb^+$ and 
 $R_{\zeta\zeta}(\cdot;\r):\T^n_{\s}\times \D_3\to \msb$,
analytic in $\theta$, such that
\be\label{homo4ter} 
\langle  \nabla_\theta S_{\zeta\zeta}(\theta,\r), \om(\r)\rangle  -A(\r)JS_{\zeta\zeta}(\theta,\r)+
S_{\zeta\zeta}(\theta,\r)JA(\r)=-f_{\zeta \zeta} (\theta,\r)+B(\r)+R_{\zeta\zeta}(\theta,\r)
 \ee
and for all $(\theta,\r)\in \T^n_{\s'}\times \D_3$, $\s'<\s$, and $j=0,1$
  \begin{align}
\label{estim-homo4R}
\mu^2\left| \p_\r^j R_{\zeta\zeta}(\theta,\r)\right|_{s,\b}&\leq  C\ \frac{e^{-\frac12(\s-\s')N}}{ (\s-\s')^{n}}[f]^{s,\b}_{\s,\mu,\D},\\
\label{estim-homo4S}
\mu^2 \left|\p_\r^j  S_{\zeta\zeta}(\theta,\r)\right|_{s,\b+}&\leq C\
\frac{N^{1+d^*/\gamma}}{\kappa^{2+d^*/2\beta}(\s-\s')^{n}}[f]^{s,\b}_{\s,\mu,\D},\\
 \label{B}
\mu^2 \left|\p_\r^j B(\r)\right|_{s,\b}&\leq  C\   [f]^{s,\b}_{\s,\mu,\D}.\end{align}
The two exponents $\exp$ and $\exp'$ are positive numbers depending on $n$, $\ga$, $d^*$, $\a_1$, $\a_2$, $\b$. The constant $C$ also depends on $|\om_{0}|_{C^{1}(\D)}$.
\end{proposition}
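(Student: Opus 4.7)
\smallskip

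\noindent\emph{Proof plan.} Expand in Fourier series $S_{\zeta\zeta}(\theta,\r)=\sum_{k}\hat S_{\zeta\zeta}(k,\r)e^{ik\cdot\theta}$ and similarly for $f_{\zeta\zeta}$, then pass to the complex conjugate variables \eqref{U} in which $JA_{0}$ becomes block diagonal with scalar eigenvalues $\pm i\la$. Under this change, a symmetric matrix $S_{\zeta\zeta}$ splits into three pieces $S^{\xi\xi}$, $S^{\eta\eta}$, $S^{\xi\eta}$, and restricting to a pair of clusters $([a],[b])$ and a mode $k$, each piece satisfies a finite dimensional linear equation on $\mathcal{M}_{[a]\times[b]}(\C)$ whose linear operator $\Lc_{k,[a],[b]}^{\sharp}(\r)$ is, modulo a Hermitian perturbation of size $\lesssim\de_{0}$ coming from $A-A_{0}$ (cf.~\eqref{ass2}), the multiplication by $\langle k,\om\rangle\pm(\la+\lb)$ in the $\xi\xi,\eta\eta$ cases and by $\langle k,\om\rangle+\la-\lb$ in the $\xi\eta$ case.

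\smallskip

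\noindent For $|k|>N$ we put the corresponding Fourier coefficients of $f_{\zeta\zeta}$ directly into $R_{\zeta\zeta}$; analyticity in $\theta$ and the Cauchy inequality give the exponential bound \eqref{estim-homo4R} exactly as in Propositions~\ref{prop:homo12}--\ref{prop:homo3}. For $|k|\leq N$ we invert the block operators. On $S^{\xi\xi}$ and $S^{\eta\eta}$, Hypothesis~A2 together with \eqref{laequiv} provides a lower bound $\gtrsim\de_{0}(w_{a}+w_{b})$ valid either for all $\r\in\D$ or, in its second alternative, for all $\r$ outside a set of measure $\lesssim N^{n+1}\ka/\de_{0}$ via Lemma~\ref{EK}; this already suffices for both the $|\cdot|_{s,\b}$ and $|\cdot|_{s,\b+}$ norms, since $w_{a}+w_{b}\geq 1+|w_{a}-w_{b}|$. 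On $S^{\xi\eta}$, Hypothesis~A3 applied to the eigenvalues of $Q_{[a]}(\r),Q_{[b]}(\r)$ (which by \eqref{ass2} and Lemma~\ref{EK} remain close to $\la,\lb$) yields, outside a set of $\r\in\D$ of measure $\lesssim N^{\exp}(\ka/\de_{0})^{\exp'}$, the crucial lower bound
\[
\bigl\|\Lc^{\xi\eta}_{k,[a],[b]}(\r)^{-1}\bigr\|\le\bigl[\ka(1+|w_{a}-w_{b}|)\bigr]^{-1}.
\]
The only blocks that cannot be solved are those with $k=0$, $[a]=[b]$ on the $\xi\eta$ component: these live in the kernel of the homological action and lie in $\NF$, so we absorb them by setting $B_{[a]}^{[a]}:=\(\,(f_{\zeta\zeta})_{[a]}^{[a]}(\cdot,\r)\,\)$, which gives \eqref{B} directly from the definition of the norm.

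\smallskip

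\noindent The main obstacle is upgrading the pointwise, operator-norm bound on each block of $\hat S^{\xi\eta}_{[a]}{}^{[b]}(k,\r)$ to the $|\cdot|_{s,\b+}$ estimate \eqref{estim-homo4S}. The $(1+|w_{a}-w_{b}|)$ required by this norm is furnished precisely by the same factor in the second Melnikov condition \eqref{D33}, so inverting the divisor once yields a bound of the correct shape modulo a loss that must be controlled on each finite block of size $\lesssim w_{a}^{d^{*}}\cdot w_{b}^{d^{*}}$. Passing from the eigenvalue bound on the perturbed operator to its full block operator norm is the content of the Delort--Szeftel type lemma of Appendix~A, which introduces the combinatorial loss $\ka^{-d^{*}/(2\b)}$ relating the block dimensions to the weight $\b$. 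The factor $N^{1+d^{*}/\ga}$ combines the Fourier cut-off $N$ with $N^{d^{*}/\ga}$ coming, via \eqref{laequiv} and \eqref{block}, from the range $w_{a}\lesssim N^{1/\ga}$ of clusters for which the second alternative of A2 is active. The $\ka^{-2}$ rather than $\ka^{-1}$ reflects the single derivative in $\r$ obtained by differentiating the block equation and re-solving, in the manner of Proposition~\ref{prop:homo3}, which also produces the dependence on $|\om_{0}|_{C^{1}(\D)}$. Finally, replacing the complex solution by its Hermitian/symmetric part yields a real solution satisfying the same estimates and ensures $B\in\NF$.
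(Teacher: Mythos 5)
Your plan is structurally the same as the paper's: pass to complex variables, split $S_{\zeta\zeta}$ into $S^{\xi\xi},S^{\eta\eta},S^{\xi\eta}$, truncate at $|k|\le N$ putting the tail into $R_{\zeta\zeta}$, absorb the $k=0$, $[a]=[b]$, $\xi\eta$ block into $B$, and control the block inverses via the measure excisions and the Delort--Szeftel lemma of Appendix~A. That is the right skeleton. But two steps as you state them would not actually deliver the claimed estimates.

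First, for the $\xi\xi$ and $\eta\eta$ components you assert that a lower bound $\gtrsim\ka(w_a+w_b)$ on the scalar divisors ``already suffices'' for the $|\cdot|_{s,\b+}$ norm. This skips the real difficulty: the Sylvester-type operator $M\mapsto (\langle k,\om\rangle-{}^tQ_{[a]})M-MQ_{[b]}$ is only diagonal after conjugating each side by the orthonormal matrices $P_{[a]},P_{[b]}$, and inverting it componentwise gives a Hilbert--Schmidt bound, not an operator-norm bound on $B_{[a]}^{[b]}$. Passing from the elementwise divisor bound to $\|B_{[a]}^{[b]}\|$ loses, in general, a factor of order $\sqrt{d_{[a]}d_{[b]}}\sim (w_aw_b)^{d^*/2}$, and that loss must be absorbed; this is precisely what Lemma~\ref{delort} does, by splitting into the three regimes $w_a\gg w_b$ or $w_b\gg w_a$ (Neumann series), $w_a\sim w_b$ both large (the perturbation $Q_{[a]}-\lambda_a I$ is small, again Neumann series), and $w_a\sim w_b$ both $\le k_2$ (finitely many blocks, dimension factor allowed). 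The paper invokes Lemma~\ref{delort} for the $\eta\eta$ equation as well as for $\xi\eta$, and it is needed there too; without it the $\b+$ estimate with the $N^{d^*/(2\ga)}\ka^{-d^*/(4\b)}$ loss does not follow from the divisor bound alone.

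Second, for $\xi\eta$ with $k\ne0$ you say A3 ``applied to the eigenvalues of $Q_{[a]},Q_{[b]}$'' yields the excised set ``outside a set of measure $\lesssim N^{\exp}(\ka/\de_0)^{\exp'}$''. Hypothesis~A3 is stated for the unperturbed $\la-\lb$, and the correction $|\alpha_j-\la|\le\de_0/(4w_a^{2\b})$ coming from \eqref{ass2} is \emph{not} uniformly small compared to the A3 margin $2\eta(1+|w_a-w_b|)$: for $w_a\lesssim(\de_0/\eta)^{1/(2\b)}$ the perturbation can swamp the lower bound. The paper therefore runs a genuine two-regime argument: A3 handles $w_a$ large, while for $w_a$ small (and then $w_b$ bounded because $|\langle k,\om\rangle-\la+\lb|\le1$ forces $w_b$ comparable) one excises finitely many bad sets by Lemma~\ref{EK}; the auxiliary margin $\eta$ is then chosen by equating the two excision measures, and that optimization is what produces the exponent $\exp'=\frac{2\b\a_2}{2d^*+3+2\b\a_2}$. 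Your proposal does not carry out (or even flag) this balancing, so the measure bound, which is the crux of the proposition, is not established as written.

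Smaller points: the factor $\ka^{-2}$ (rather than $\ka^{-1}$) in \eqref{estim-homo4S} indeed comes from differentiating the block equation in $\r$ and re-solving, as you say; and taking real parts at the end does give a real solution obeying the same bounds, as in Propositions~\ref{prop:homo12}--\ref{prop:homo3}.
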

\proof
As in the previous section, and using the same notation, we re-write \eqref{homo4ter} in complex variables. 
So we introduce 
$S={}^t\! U S_{\zeta,\zeta} U$, $R={}^t\! U R_{\zeta\zeta} U$ and $F={}^t\! U f_{\zeta\zeta} U$.
\\
 By construction, $ S^b_a\in \M_{2\times 2}$ for all $a,b\in\L$. Let us denote  $$ S^b_a= \left( \begin{array}{cc} (S^b_a)^{\xi\xi} & (S^b_a)^{\xi\eta} \\   (S^b_a)^{\xi\eta} & (S^b_a)^{\eta\eta}\end{array}\right) $$ and then $$S^{\xi\xi}=((S^b_a)^{\xi\xi})_{a,b\in\L}, \quad S^{\xi\eta}=((S^b_a)^{\xi\eta})_{a,b\in\L},\quad S^{\eta\eta}=((S^b_a)^{\eta\eta})_{a,b\in\L}.$$ We use similar notations for $R$, $B$ and $F$.\\
In this notation  \eqref{homo4ter}  decouples into three equations
\footnote{Actually \eqref{homo4ter}   decomposes into four scalar equations but the fourth one is the transpose of the third one.}
\begin{align*}
&\langle \nabla_\theta  S^{\xi\xi},\om \rangle +i Q S^{\xi\xi}+ i S^{\xi\xi}\ {}^tQ= B^{\xi\xi}- F^{\xi\xi}+ R^{\xi\xi},\\ 
&\langle \nabla_\theta  S^{\eta\eta}, \om\rangle  -i\ {}^tQ S^{\eta\eta} -i  S^{\eta\eta} Q= B^{\eta\eta}- F^{\eta\eta}+ R^{\eta\eta},\\ 
&\langle \nabla_\theta  S^{\xi\eta},\om\rangle  + iQ S^{\xi\eta} -  iS^{\xi\eta} Q= B^{\xi\eta}- F^{\xi\eta}+ R^{\xi\eta}\,,\end{align*}
where we recall that $Q$ is the scalar valued matrix associated to $A$ via the formula \eqref{Q}. {
The first and the second equations are of the same type, so we focus on the resolution of   the second and the 
third equations.}
Written in Fourier variables,  they read
 \begin{align} 
\label{homo4-2}
&i(\langle k ,\om\rangle -{}^tQ)\hat S^{\eta\eta}(k) - i\hat S^{\eta\eta}(k) Q= \delta_{k,0}B^{\eta\eta}-\hat F^{\eta\eta}(k)+\hat R^{\eta\eta}(k),\quad k\in \Z^n,\\ \label{homo4-3}
&i(\langle  k,\om\rangle +Q)\hat S^{\xi\eta}(k) - i\hat S^{\xi\eta}(k) Q= \delta_{k,0}B^{\xi\eta}-\hat F^{\xi\eta}(k)+\hat R^{\xi\eta}(k),\quad k\in \Z^n   \,,
\end{align}
where  $\delta_{k,j}$ denotes the Kronecker symbol. 

\medskip

{\it Equation \eqref{homo4-2}. } We  { chose $B^{\eta\eta}=0$ and}
 decompose the equation into ``components'' on each product block
$[a]\times[b]$:
\be\label{homo+}
L\, \hat S_{[a]}^{[b]}(k) 
= i\hat F_{[a]}^{[b]}(k,\r){ -i}\hat R_{[a]}^{[b]}(k)
\ee
where we have suppressed the upper index ${\eta\eta}$ and the operator $L:= L(k,{[a]},{[b]},\r)$ is the linear   Hermitian operator, acting in the space of complex
$[a]\times[b]$-matrices defined by
$$
L\, M= \big(
\langle k, \om(\r)\rangle  - {}^tQ_{[a]}(\r)\big) M 
- M Q_{[b]}(\r).$$
 The matrix $Q_{[a]}$ can be diagonalized in an orthonormal basis:
 $${}^tP_{[a]}Q_{[a]}P_{[a]}=D_{[a]}.$$
Therefore denoting $\hat{S'}_{[a]}^{[b]}={}^tP_{[a]}S_{[a]}^{[b]}P_{[b]}$, $\hat{F'}_{[a]}^{[b]}={}^tP_{[a]}F_{[a]}^{[b]}P_{[b]}$ and $\hat{R'}_{[a]}^{[b]}={}^tP_{[a]}R_{[a]}^{[b]}P_{[b]}$ the homological equation \eqref{homo+} reads
\be\label{homo++}( \langle k,\om\rangle +D_{[a]})\hat{S'}_{[a]}^{[b]}(k)-{S'}_{[a]}^{[b]}(k)D_{[b]}=i\hat{F'}_{[a]}^{[b]}(k){ -i}\hat {R'}_{[a]}^{[b]}(k).\ee
This equation can be solved term by term:
\be\label{R'}\hat{R'}_{j\ell}(k)= \hat{F'}_{j\ell}(k),\quad j\in[a],\ \ell\in[b],\ |k|> N\ee
and
\be\label{S'}\hat{S'}_{j\ell}(k)=\frac i{\langle k,\om(\r)\rangle \ -\alpha_j(\r)-\beta_\ell(\r)}\hat{F'}_{j\ell}(k),\quad j\in[a],\ \ell\in[b],\ |k|\leq N\ee
where $\alpha_j(\r)$ and $\beta_\ell(\r)$ denote eigenvalues of $Q_{[a]}(\r)$ and $Q_{[b]}(\r)$, respectively. First notice that by \eqref{R'} one has
 $$|R(\theta)|_{s,\b}=|R'(\theta)|_{s,\b}\leq \frac{Ce^{-\frac12 (\s-\s')N}}{ (\s-\s')^{n}}
\sup_{|\Im\theta|<\s}| F(\theta)|_{s,\b}.$$
To estimate $S$ we want to use Lemma \ref{delort} below.
As $Q_{[a]}=\diag\{\la\ : a\in[a]\}+B_{[a]}$ with $B$ Hermitian,  using  hypothesis \eqref{ass2}
 we get that 
\be\label{alpha-a}|(\alpha_j(\r)+\beta_\ell(\r)) -(\la+\lb)|\leq
 \left(\frac{\de_0}{4}+ \frac{\de_0}{4}\right)\frac 1{(w_{a}w_{b})^{\b}}\le \frac{\de_0} {2(w_{a}w_{b})^{\b}}.
 \ee
 Moreover, in order to apply Lemma \ref{delort} we have to estimate $|\alpha_{j}(\r)- \lambda_{a}|$ and $|\beta_{l}(r)-\lambda_{b}|$, this is done thanks to assumption \eqref{ass2} : 
 $$ |\alpha_{j} - \lambda_{a}| \leq \| Q_{[a]}(\r) - \lambda_{[a]} \mathrm{Id} \| \leq \frac{1}{w_{[a]}^{2\beta}} |A(\r) -A_{0}|_{s,\b}\leq \frac{\delta_{0}}{4 w_{[a]}^{2\b}}\,$$
 and the corresponding estimate holds for $|\beta_{l}(r)-\lambda_{b}|$.
 
It follows as in the proof of Proposition \ref{prop:homo3}, using Lemma~\ref{EK}, relation
\eqref{laequiv}, Assumption A2
and \eqref{ass2},  that there exists a subset 
$\D_2=\D_2(h,\ka,N)\subset \D$,  satisfying
 $$\meas (\D\setminus \D_2)\leq  C\Big(\frac{ N}{c_0}\Big)^{\exp} \frac{\ka}{\de_0}, $$
such that 
$$| \langle k,\om(\r)\rangle \ -\alpha_j(\r)-\beta_\ell(\r)|\geq {\ka}(1+|w_a+w_b|),$$
holds for all $\r\in\D_2$, all $|k|\le N$, all $j\in[a],\ \ell\in[b]$ and all $[a],[b]\in\hat\L$.
Thus for $\r\in\D_2$ we obtain by Lemma \ref{delort} that $\hat S'(k)\in\msb^+$ for all $|k|\leq N$ and
$$|\hat S'(k)|_{s,\b+}\leq C\ka^{-1-d^{*}/(4\beta)}N^{d^{*}/(2\gamma)}|\hat F'(k)|_{s,\b}.$$
Therefore we obtain a solution  $S$ satisfying for any $|\Im\theta|<\s'$
\begin{align*}
| S(\theta)|_{s,\b+}\leq &\frac{CN^{d^{*}/(2\gamma)}}{\ka^{1+d^{*}/(4\beta)} (\s-\s')^{n}}
\sup_{|\Im\theta|<\s}| F(\theta)|_{s,\b}.\\
\end{align*}

The estimates for the derivatives with respect to $\r$ are obtained by differentiating \eqref{homo+} which leads to (here we drop all the indices to simply the formula)
$$L(\partial_\r \hat S_{[a]}^{[b]}(k,\r))=-(\partial_\r L)\hat S_{[a]}^{[b]}(k,\r)+i\partial_\r \hat F_{[a]}^{[b]}(k,\r)-i\partial_\r \hat R_{[a]}^{[b]}(k,\r)\,,$$
which is an equation of the same type as \eqref{homo+} for $\partial_\r \hat S_{[a]}^{[b]}(k,\r)$ and $\partial_\r \hat R_{[a]}^{[b]}(k,\r)$ where $i\hat F_{[a]}^{[b]}(k,\r)$ is replaced by $B_{[a]}^{[b]}(k,\r)=-(\partial_\r L)\hat S_{[a]}^{[b]}(k,\r)+i\partial_\rho\hat F_{[a]}^{[b]}(k,\r)$. This equation is solved by defining
\begin{align*}\partial_\r \hat S_{[a]}^{[b]}(k,\r)= &\chi_{|k|\le N} (k)
L(k,[a],[b],\r)^{-1}B_{[a]}^{[b]}(k,\r),\\
 \partial_\r \hat R_{[a]}^{[b]}(k,\r)=&- i{ \chi_{|k| > N} (k)}
B_{[a]}^{[b]}(k,\r)={ \chi_{|k| > N} (k)}
\partial_\rho\hat F_{[a]}^{[b]}(k,\r)\,.
\end{align*}
Since
$$ |(\partial_\r L)\hat S(k,\r)|_{s,\b}\leq C(N (|\partial_\r \om_0|+\de_0)+2\delta_0 ) |\hat S(k,\r)|_{s,\b}\leq CN|\hat S(k,\r)|_{s,\b}\,,$$
we obtain
$$
|B(k,\r)|_{s,\b}\leq CN\ka^{-1-d^{*}/(4\beta}N^{d^{*}/2\gamma}|\hat F(k)|_{s,\b}
$$
and thus following the same strategy as in the resolution of \eqref{homo+} we get
\begin{align*}
\mu^{2}| \partial_\r S(\theta)|_{s,\b+}\leq &\frac{CN^{1+d^{*}/\gamma}}{\ka^{2+d^{*}/(2\beta)} (\s-\s')^{n}}
[f]^{s,\b}_{\s,\mu,\D},\\
\mu^{2}|\partial_\r R(\theta)|_{s,\b}\leq & \frac{Ce^{-\frac12 (\s-\s')N}}{ (\s-\s')^{n}}
[f]^{s,\b}_{\s,\mu,\D}.
\end{align*}

\medskip

{\it Equation \eqref{homo4-3}. } 
It remains to consider  \eqref{homo4-3} which decomposes into the ``components''
over the product blocks $[a]\times[b]$ (we have suppressed the upper index ${\xi\eta}$):
\be\label{homo-}
\begin{split}
 \langle k, \om(\r)\rangle \ 
\hat S_{[a]}^{[b]}(k) &+Q_{[a]}(\r)\hat S_{[a]}^{[b]}(k) - \hat S_{[a]}^{[b]}(k) Q_{[b]}(\r) \\&= 
-i\delta_{k,0}B_{[a]}^{[b]}+i \hat F_{[a]}^{[b]}(k,\r)-i\hat R_{[a]}^{[b]}(k).
\end{split}
\ee
 First we solve this case when $k=0$ and $w_a=w_b$  by defining $$\hat S_{[a]}^{[a]}(0)=0,\quad \hat R_{[a]}^{[a]}(0)=0\text{ and }B_{[a]}^{[a]}=\hat F_{[a]}^{[a]}(0).
$$
Then we impose $B_{[a]}^{[b]}=0$ for $w_a\neq w_b$ in such a way $B\in\msb\cap\NF$ and satisfies
$$
|B|_{s,\b}\leq | \hat F(0)|_{s,\b}.$$
The estimates of the derivatives with respect to $\r$ are obtained by differentiating the expressions for
$B$.

\medskip

Then, when $k\neq 0$ or $w_a\neq w_b$, with the same definition of $S'$, $F'$ as in \eqref{homo++} we obtain 
\be\label{homo+-}( \langle k,\om\rangle +D_{[a]})\hat{S'}_{[a]}^{[b]}(k)-{S'}_{[a]}^{[b]}(k)D_{[b]}=i\hat{F'}_{[a]}^{[b]}(k){ -i}\hat {R'}_{[a]}^{[b]}(k).\ee 
This equation can be solved term by term:
\be\label{R''}\hat{R'}_{j\ell}(k)= \hat{F'}_{j\ell}(k),\quad j\in[a],\ \ell\in[b],\ |k|> N\ee
and
\be\label{S''}\hat{S'}_{j\ell}(k)=\frac i{\langle k,\om(\r)\rangle \ -\alpha_j(\r)-\beta_\ell(\r)}\hat{F'}_{j\ell}(k),\quad j\in[a],\ \ell\in[b],\ |k|\leq N\ee
where $\alpha_j(\r)$ and $\beta_\ell(\r)$ denote eigenvalues of $Q_{[a]}(\r)$ and $Q_{[b]}(\r)$, respectively. First notice that by \eqref{R''} one has
 $$|R(\theta)|_{s,\b}=|R'(\theta)|_{s,\b}\leq \frac{Ce^{-\frac12 (\s-\s')N}}{ (\s-\s')^{n}}
\sup_{|\Im\theta|<\s}| F(\theta)|_{s,\b}.$$
To solve \eqref{S''} we face the  small divisors
\be\label{divisors}
\langle k,\omega(\rho)\rangle +\alpha_j(\rho)-\beta_\ell(\rho),\quad j\in [a],\ \ell\in [b].
\ee
To estimate them, we have to distinguish between the case  $k= 0$ and $k\neq 0$.

\medskip

{\it The case $k=0$.}
In that case   we know that $w_a\neq w_b$ 
and we use   \eqref{ass2} and \eqref{la-lb} to get 
$$|\alpha_j(\r)-\beta_\ell(\r)|\geq c_0|w_a-w_b|-\frac{\de_0}{4w_a^{2\b}}
-\frac{\de_0}{4w_b^{2\b}}\geq \ka(1+|w_a-w_b|).$$
This last estimate allows us to use Lemma \ref{delort} to conclude that 
$$
| \hat S(0)|_{\b+}\leq \frac{C}{\ka^{d^{*}+1}}| \hat F(0)|_{\b}.$$

\medskip

{\it The case $k\not=0$. }  If $k\neq 0$ we face the small divisors \eqref{divisors} with 
non-trivial $\langle k,\omega\rangle$. 
 Using Hypothesis A3, there is a set
 $\D_2'=\D(\om, 2\eta, N)$, 
 $$\meas(\D\setminus {\D_2'})\leq CN^{\a_1}(\frac{\eta}{\delta_0})^{\a_2} ,$$
 such that for all $\r\in\D_2'$ and  $0<|k|\leq N$ 
  $$|\langle k, \om(\r)\rangle \ -\la+\lb|\geq 2\eta(1+|w_a-w_b|).$$
 By \eqref{ass2} this implies
 \begin{align*}|\langle k, \om(\r)\rangle \ -\alpha_j(\r)+\beta_\ell(\r)|&\geq 2\eta(1+|w_a-w_b|)-\frac{\de_0}{4w_a^{2\b}}
 -\frac{\de_0}{4w_b^{2\b}}\\
& \geq \eta(1+|w_a-w_b|)\end{align*}
 if
$$ w_b\geq w_a\geq \Big( \frac{\de_0}{2\eta}\Big)^{\frac1{2\b}}.$$
Let now $ w_a\leq ( \frac{\de_0}{2\eta})^{\frac1{2\b}}$. We note that $|\langle k, \om(\r)\rangle \ -\la+\lb|\leq 1$ implies that 
$$w_b^{\delta}\leq( \frac{\de_0}{2\eta})^{\frac\delta{2\b}}+ C|k|\leq ( \frac{\de_0}{2\eta})^{\frac\delta{2\b}}+ N\,.$$
As in Section 5.3, we obtain that
\be\label{inversebis}
 |\langle k,\omega(\rho)\rangle +\alpha_j(\rho)-\beta_\ell(\rho)|\geq {\ka}(1+|w_a-w_b|)\quad \forall j\in[a],\ \forall \ell\in[b]\ee
 holds
outside a set $F_{[a],[b],k}$ of measure $w_a^{d^{*}}w_b^{d^{*}}(1+|w_a-w_b|)\ka\de_0^{-1}$. This can be done considering equation \eqref{S''} as the multiplication of a vector of size $d_{[a]}d_{[b]}$ called $\hat{F}'_{jl}(k)$ by a real diagonal (hence hermitian) square $d_{[a]}d_{[b]} \times d_{[a]}d_{[b]}$ matrix, and using Hypothesis A2, Condition \eqref{ass2} and Lemma \ref{EK}.

If $F$ is the union of  $F_{[a],[b],k}$ for $|k|\leq N$, $[a],[b]\in\hat\L$ such that $ w_a\leq ( \frac{\de_0}{2\eta})^{\frac1{2\b}}$ and $w_b^{\delta} \leq ( \frac{\de_0}{2\eta})^{\frac\delta{2\b}}+ N$ respectively, we have 
 \begin{align*}\meas(F)&\leq
C( \frac{\de_0}{2\eta})^{\frac{d^{*}+1}{2\b}}\big(( \frac{\de_0}{2\eta})^{\frac\delta{2\b}}+ N\big)^{(d^{*}+2)/\delta}\frac\ka{\de_0}N^n\\
&\leq CN^{n+(d^{*}+2)/\delta} ( \frac{\de_0}{\eta})^{\frac{2d^{*}+3}{2\b}}\frac\ka{\de_0}\,.
\end{align*}
Now we choose $\eta$ so that
$$(\frac{\eta}{\delta_0})^{\a_2}=( \frac{\de_0}{\eta})^{\frac{2d^{*}+3}{2\b}}\frac\ka{\de_0}\quad \text{i.e. }\frac\eta{\de_0}=\big(\frac\ka{\de_0}\big)^{\frac{2\b}{2d^{*}+3+2\b \a_2}}.
$$
Then, as $\b\leq 1$, $\eta\leq \ka$ and $\delta \geq 1$, we have
$$\meas(F)\leq CN^{n+d^{*}+2} \big(\frac\ka{\de_0}\big)^{\frac{2\b\a_2}{2d^{*}+3+2\b\a_2}}\,.$$
Let $\D_3=\D_2\cap\D_2'\setminus F$, we have
$$\meas(\D\setminus\D_3)\leq CN^{exp}\big(\frac\ka{\de_0}\big)^{\frac{2\b\a_2}{2d^{*}+3+2\b\a_2}}$$
and by construction  for all $\r\in\D_3$,
$0<|k|\le N$, $a,b\in\L$ and $j\in[a],\ \ell\in[b]$ we have
$$|\langle k, \om(\r)\rangle \ -\alpha_j(\r)+\beta_\ell(\r)| \geq \ka(1+|w_a-w_b|).$$
 Hence using Lemma \ref{delort} once again we obtain from \eqref{homo+-} that $\hat S'(k)\in\msb^+$ and
 $$|\hat S'(k)|_{s,\b+}\leq C\ka^{-1-d^{*}/2\delta}N^{d^{*}/2\gamma}|\hat F'(k)|_{s,\b}.$$
Therefore we obtain a solution  $S$ satisfying for any $|\Im\theta|<\s'$
\begin{align*}
| S(\theta)|_{s,\b+}\leq &\frac{CN^{d^{*}/2\gamma}}{\ka^{1+d^{*}/2\delta} (\s-\s')^{n}}
\sup_{|\Im\theta|<\s}| F(\theta)|_{s,\b},\\
\end{align*}
The estimates of the derivatives with respect to $\r$ are obtained by differentiating \eqref{homo-} and proceeding as at the end of the resolution of equation \eqref{homo4-2}.

In this way we have constructed a solution $S_{\zeta \zeta}, R_{\zeta \zeta}, B$  of 
the fourth component of the homological equation which satisfies all required estimates.
To guarantee that it is real, as at the end of Section~\ref{s5.3} we replace  $S_{\zeta \zeta}, R_{\zeta \zeta}, B$
 by their real parts and extend it analytically to $\T^n_{\sigma'}$ 
(e.g, replace $S_{\zeta \zeta}(\theta,\rho)$ by $\frac12(S_{\zeta \zeta}(\theta,\rho) + 
\bar S_{\zeta \zeta}( \bar\theta,\rho) )$).
 \endproof

\subsection{Summing up}

Let
$$h=\om(\r)\cdot r +\frac 1 2 \langle \zeta,  A(\r)\zeta\rangle$$
where $\r\to\om(\r)$ and $\r\to A(\r)$ are $C^1$ on $\D$ and $A$ is on normal form.
\begin{proposition}\label{thm-homo}

Assume
\be\label{Aom}
 |\partial_\r^j (A(\r)-A_0)|_{s,\b} \le \frac{\delta_0}{4},\quad |\partial_\r^j(\om-\om_0)|\leq \delta_0\ee
for $j=0,1$  and $\r\in \D$.
Let $f\in \Tc^{s,\b}(\s,\mu,\D)$, $0<\ka\leq \frac{\de_0}2$ and $N\ge 1$. 
 Then there exists a subset $\D'=\D'(h, \ka,N)\subset \D$, satisfying
 $$\meas (\D\setminus \D')\leq  
 CN^{\exp} \Big(\frac{\ka}{\delta_0}\Big)^{\exp'}, $$
 and there exist real jet-functions 
   $S\in\Tc^{s,\b+}(\s',\mu,\D')$ , $R\in\Tc^{s,\b}(\s',\mu,\D')$ and a  normal form
$$\hat h=\( f(\cdot,0;\r) \)+\( \nabla_r f(\cdot,0;\r) \)\cdot r+ \frac 1 2 \langle \zeta, B(\r)\zeta\rangle,$$
  such that
$$
\{ h,S \}+f^T=\hat h+R.
$$
Furthermore, for all  $0\le\s'<\s$
\be\label{estim-B}|\partial_\r^jB(\r)|_{s,\b}\le
C\ [f]^{s,\b}_{\s,\mu,\D'},\quad j=0,1 \text{ and }\r\in\D'
\ee
\be\label{estim-S}
[S ]^{s,\b+}_{\s',\mu,\D'} \leq 
C\frac{N^{1+d^{*}/\gamma}}{\ka^{2+d^{*}/2\beta} (\s-\s')^{n}}[f]^{s,\b}_{\s,\mu,\D'}\ee
 
\be\label{estim-R}
[R]^{s,\b}_{\s',\mu,\D'}\leq 
C
\frac{e^{-\frac12(\s-\s')N}}{ (\s-\s')^{n}}[f]^{s,\b}_{\s,\mu,\D'}.\ee
The two exponents $\exp$ and $\exp'$ are positive numbers depending on $c_{0}$, $n$, $d^{*}$, $\a_1$, $a_2$, $\ga$, $\b$. The constant $C$  also depends  on $|\om_{0}|_{C^{1}(\D)}$.
 
\end{proposition}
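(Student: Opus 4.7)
The plan is to decompose the jet $f^T = f_\theta + \langle f_r, r\rangle + \langle f_\zeta,\zeta\rangle + \tfrac12\langle f_{\zeta\zeta}\zeta,\zeta\rangle$ and correspondingly look for a solution $S$ as a jet of the same form, so that the single homological equation $\{h,S\} + f^T = \hat h + R$ breaks into the four scalar/vector/operator equations \eqref{homo1}--\eqref{homo4} analyzed in Sections~\ref{homogene}, \ref{s5.3}, \ref{s5.4}. The outputs $S_\theta, S_r, S_\zeta, S_{\zeta\zeta}$ and the corresponding remainders $R_\theta, R_r, R_\zeta, R_{\zeta\zeta}$ are then assembled into $S$ and $R$, with the normal form part dictated by the averaging: $c(\r) = \( f_\theta(\cdot,\r)\)$, $\chi(\r) = \( f_r(\cdot,\r)\)$, and $B(\r)$ given by Proposition~\ref{prop:homo4}. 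Note that the hypothesis $|\om-\om_0|_{\Ca^1}\le\de_0$ and the bound \eqref{Aom} on $A$ are exactly what is required to apply Propositions~\ref{prop:homo12}, \ref{prop:homo3}, \ref{prop:homo4}, since the pointwise hypothesis \eqref{ass1} is a consequence of \eqref{ass2} via the trivial bound $\|(A-A_0)_{[a]}\| \le w_a^{-2\b}|A-A_0|_{s,\b}$.

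The set $\D'$ will be taken as the intersection $\D_1 \cap \D_2 \cap \D_3$ of the three exclusion sets produced by Propositions~\ref{prop:homo12}, \ref{prop:homo3}, \ref{prop:homo4}, each applied with the common parameters $\ka$ and $N$. The measure estimate follows by adding the three individual measure estimates: the dominant contribution comes from the fourth equation (Proposition~\ref{prop:homo4}), which yields the shape $CN^{\exp}(\ka/\de_0)^{\exp'}$ with exponents depending on $n,d^*,\ga,\a_1,\a_2,\b$, and this absorbs the milder estimates $CN^n(\ka/\de_0)$ and $CN^{\exp}(\ka/\de_0)$ from the first three equations.

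To assemble the norms, I combine the individual jet pieces using the definition of $[\cdot]^{s,\b}_{\s,\mu,\D'}$ and $[\cdot]^{s,\b+}_{\s,\mu,\D'}$, and use the bounds \eqref{norm2}--\eqref{norm3} to express the data $f_\theta, f_r, f_\zeta, f_{\zeta\zeta}$ in terms of $[f]^{s,\b}_{\s,\mu,\D'}$. For $S$, the weights are $1$ on $S_\theta$, $\mu^2$ on $S_r$, $\mu$ on $\|S_\zeta\|_{s+\b}$, and $\mu^2$ on $|S_{\zeta\zeta}|_{s,\b+}$; inserting the bounds of Propositions~\ref{prop:homo12}(i)-(ii), \ref{prop:homo3} and \ref{prop:homo4} and keeping the largest growth in $N$ and the worst negative power of $\ka$ (which are both furnished by the $S_{\zeta\zeta}$-estimate \eqref{estim-homo4S}) yields \eqref{estim-S}. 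The same procedure applied to the $R$-pieces gives \eqref{estim-R}, where all four remainders share the factor $e^{-(\s-\s')N/2}/(\s-\s')^n$. The estimate \eqref{estim-B} is the direct restatement of \eqref{B}.

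The main obstacle is purely bookkeeping: one must carefully match the shrinkings in $\s$ and in $\mu$, align the different $\ka$'s (use the same $\ka$ in all three propositions, possibly reducing $\ka$ by an absolute constant), and track the worst exponent in $N$ and $\ka$ over the four subproblems. There is no new analysis: once the four equations are set in parallel and the hypotheses of the three source propositions are verified, the only nontrivial check is the assertion that the constructed $S_{\zeta\zeta}$, lying in $\msb^+$ by Proposition~\ref{prop:homo4}, makes the full $S$ a genuine element of $\Tc^{s,\b+}$ rather than just $\Tc^{s,\b}$, which is automatic from the definition of $\Tc^{s,\b+}$ since the other jet pieces do not involve the Hessian of $S$ in a way that can destroy the $|\cdot|_{s,\b+}$ control.
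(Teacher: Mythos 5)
Your proposal takes essentially the same approach as the paper: assemble $S$ from the four jet pieces produced by Propositions~\ref{prop:homo12}, \ref{prop:homo3}, \ref{prop:homo4}, intersect the three exclusion sets, and collect the norm estimates. The one point the paper singles out that you slightly mislabel is the gradient regularity $\nabla_\zeta S = S_\zeta + S_{\zeta\zeta}\zeta \in Y_{s+\b}$, which needs Lemma~\ref{product}(iii) applied to $S_{\zeta\zeta}\in\msb^+$ acting on $\zeta\in Y_s$ — a requirement already for membership in $\Tc^{s,\b}$, not only for the upgrade to $\Tc^{s,\b+}$ — but this is covered by the ingredients you cite, so there is no gap.
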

\proof
We define  $S$ by
$$
S(\theta,r,\zeta)=S_\theta(\theta)+\langle S_r(\theta), r\rangle+\langle S_\zeta(\theta),\zeta\rangle+
\frac 1 2 \langle S_{\zeta\zeta}(\theta)\zeta,\zeta \rangle.
$$ 
where $S_\theta$, $S_r$, $S_\zeta$ and $S_{\zeta\zeta}$ are constructed in Propositions \ref{prop:homo12}, \ref{prop:homo3} and \ref{prop:homo4}. Hamiltonians $R$ and $B$ are also constructed in these 3 propositions. Then all the statements in Proposition \ref{thm-homo} are satisfied and in particular we notice that
$$\nabla_\zeta S= S_\zeta+S_{\zeta\zeta}\zeta$$
belongs to $Y_{s+\b}$ as a consequence of Propositions \ref{prop:homo3}, \ref{prop:homo4} and Lemma \ref{product} (iii).
\endproof

\section{Proof of the KAM Theorem.}
The Theorem \ref{main} is proved by an iterative KAM procedure. We first describe the general step of this KAM procedure.
\subsection{The KAM step}
Let $h$ be a normal form Hamiltonian
$$
h= \om\cdot r +\frac 1 2 \langle \zeta, A(\om)\zeta\rangle$$
with $A$ on normal form, $A-A_0\in\M_\b$ and satisfying \eqref{Aom}. Let $f\in \Tc^{s,\b}(\s,\mu,\D)$ be a (small) Hamiltonian perturbation.  
Let $S=S^T \in \Tc^{s,\b+}(\s',\mu,\D')$ be the  solution of the homological equation
\be \label{eq-homobis}
\{ h,S \}+f^T=\hat h+R.
\ee
defined in Proposition \ref{thm-homo}.
Then defining 
 $$ h^+:=h+\hat h,$$
we get
$$h\circ \Phi^1_S=h^++f^+$$
with
\be \label{f+}
f^+= R+(f-f^T)\circ \Phi^1_S+\int_0^1\{ (1-t)(\hat h+R)+tf^T,S \}\circ \Phi^t_S\ \dd t.
\ee
The following Lemma gives an estimation of the new perturbation:

\begin{lemma}\label{lem-f+}
Let $\ka>0$, $N\geq 1$, $0<\s'<\s\leq 1$ and $0<2\mu'< \mu\leq 1$. Assume that $\D'\subset \D$, that $f\in\Tc^{s,\b}(\s,\mu,\D)$, that $R$ satisfies \eqref{estim-R}  and that $S=S^T$ belongs to $\Tc^{s,\b+}(\s'',\mu,\D')$ with $\s''=\frac{\s+\s'}{2}$ and satisfies
\begin{equation}
\label{hypo-S} 
[S]^{s,\b+}_{\s'',\mu,\D'}\leq \frac 1{16} \mu^2 ( \s- \s').
\end{equation}
Then the function $f^+$ given by formula \eqref{f+} belongs to $ \Tc^{s,\b}(\s',\mu',\D')$ and
\begin{align}\begin{split}\label{estim-f+}
[f^+]^{s,\b}_{\s',\mu',\D'}\leq M\left(  \frac{e^{-\frac 1 2(\s-\s')N}}{  (\s-\s')^{n}}+ \left( \frac{\mu'}{\mu}\right)^3 \right. 
+\left.  \frac{N^{1+d^{*}/\gamma}}{\ka^{2+d^{*}/2\beta}\mu^2 (\s-\s')^{n+1}}[f]^{s,\b}_{\s,\mu,\D}\right) [f]^{s,\b}_{\s,\mu,\D}
\end{split}\end{align}
where $M$ is a constant depending on $n$, $d^{*}$, $\alpha_{1}$, $\alpha_{2}$, $c_{0}$, $\gamma$ and $\beta$.
\end{lemma}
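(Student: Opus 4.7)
Following the three summands of
$$f^+= R+(f-f^T)\circ \Phi^1_S+\int_0^1\{ (1-t)(\hat h+R)+tf^T,S \}\circ \Phi^t_S\,\dd t,$$
I will bound each by the corresponding summand of \eqref{estim-f+}. Throughout, the calibrated hypothesis \eqref{hypo-S} serves to meet the smallness assumption of Proposition~\ref{Summarize}(ii): since $2\mu'<\mu$ gives $(\mu-\mu')^2\ge\mu^2/4$ and $\s''-\s'=(\s-\s')/2$, one verifies
$$[S]^{s,\b+}_{\s'',\mu,\D'}\le \tfrac1{16}\mu^2(\s-\s')\le \tfrac12(\mu-\mu')^2(\s''-\s'),$$
so the flow $\Phi^t_S$ is well defined as a mapping $\O^s(\s',\mu')\to\O^s(\s'',\mu)$ for $t\in[0,1]$, and composing with it costs only a bounded constant $C\mu/(\mu-\mu')\le 2C$.

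\textbf{First two summands.} The direct $R$ term follows from monotonicity of $[\cdot]^{s,\b}$ in $\mu$ and in the parameter set, combined with \eqref{estim-R}. For the jet remainder $(f-f^T)\circ\Phi^1_S$, Proposition~\ref{lemma:jet} provides
$$[f-f^T]^{s,\b}_{\s,\mu',\D}\le C(\mu'/\mu)^3[f]^{s,\b}_{\s,\mu,\D},$$
and then Proposition~\ref{Summarize}(ii) composes with $\Phi^1_S$, preserving up to a bounded constant this third-power smallness factor and hence producing the second summand.

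\textbf{The Poisson-bracket integral.} For each $t\in[0,1]$, each constituent of $(1-t)(\hat h+R)+tf^T$ has $\Tc^{s,\b}(\s'',\mu,\D')$-norm bounded by $C[f]^{s,\b}_{\s,\mu,\D}$: the normal-form part $\hat h$ via \eqref{estim-B}, the residual $R$ via \eqref{estim-R} (absorbing $e^{-(\s-\s')N/2}\le 1$), and the jet $f^T$ via Proposition~\ref{lemma:jet}. Applying Lemma~\ref{lemma-poisson} with this control and with $[S]^{s,\b+}_{\s'',\mu,\D'}$ bounded by \eqref{estim-S} yields, on the intermediate scale $\s^*=(\s'+\s'')/2$,
$$\big[\{(1-t)(\hat h+R)+tf^T,\,S\}\big]^{s,\b}_{\s^*,\mu,\D'}\le \frac{CN^{1+d^*/\gamma}}{\kappa^{2+d^*/(2\beta)}\mu^2(\s-\s')^{n+1}}\,\big([f]^{s,\b}_{\s,\mu,\D}\big)^2.$$
Composing with $\Phi^t_S$ via Proposition~\ref{Summarize}(ii) and integrating in $t$ preserves this bound, producing the third summand.

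\textbf{Main obstacle.} The principal accountancy concerns the cascade of intermediate $\s$-scales: Lemma~\ref{lemma-poisson} consumes $\s''-\s^*$, Proposition~\ref{Summarize}(ii) consumes $\s^*-\s'$, and each gap must be comparable to $(\s-\s')/4$ so that every step loses a single power of $(\s-\s')^{-1}$. The symmetric choice $\s^*=(\s'+\s'')/2$ achieves this, and the constraint $2\mu'<\mu$ keeps $(\mu-\mu')^2$ comparable to $\mu^2$ throughout, so the smallness conditions of Lemma~\ref{lemma-poisson}, Proposition~\ref{lemma:jet} and Proposition~\ref{Summarize}(ii) are all uniformly implied by \eqref{hypo-S}, and all the resulting constants pack into a single $M$ depending only on $n,d^*,\alpha_1,\alpha_2,c_0,\gamma,\beta$.
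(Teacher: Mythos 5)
Your decomposition, choice of auxiliary results (Proposition~\ref{lemma:jet}, Lemma~\ref{lemma-poisson}, Proposition~\ref{Summarize}(ii)/Lemma~\ref{composition}), intermediate-scale bookkeeping and smallness verification all match the paper's argument; the only structural difference is the symmetric bisection of the $\s$-interval instead of the paper's thirds, which is cosmetic.

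There is, however, one step that as written does not close: the claim that the $R$ constituent of $g_t=(1-t)(\hat h+R)+tf^T$ satisfies $[R]^{s,\b}_{\s'',\mu,\D'}\le C[f]^{s,\b}_{\s,\mu,\D}$ "by absorbing $e^{-(\s-\s')N/2}\le1$". Using only $e^{-\cdot}\le 1$ in \eqref{estim-R} gives $[R]\lesssim (\s-\s')^{-n}[f]$, not $C[f]$, and carrying this extra factor through Lemma~\ref{lemma-poisson} and the composition would produce $(\s-\s')^{-(2n+1)}$ in the third summand rather than the $(\s-\s')^{-(n+1)}$ claimed in \eqref{estim-f+}. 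The paper's proof bounds the quantity $3^n e^{-(\s-\s')N/6}(\s-\s')^{-n}$ by a constant and explicitly invokes the proviso "for $N$ large enough" to do so — a condition which is indeed satisfied in the eventual application (since $N_j\sim(\s_j-\s_{j+1})^{-1}\ln\eps_j^{-1}$). You should either retain this proviso, or keep the exponential in the estimate of $[g_t]$ and show it is dominated by the first summand of \eqref{estim-f+}; the bare inequality $e^{-\cdot}\le1$ does not suffice.
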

\proof
Let us denote the three terms in the r.h.s. of \eqref{f+} by $f^+_1$, $f^+_2$ and $f^+_3$. In view of \eqref{estim-R}, we have that $[f^+_1]^{s,\beta}_{\s',\mu',\D'}$ is controlled by the first term in r.h.s. of \eqref{estim-f+}. \\
By Proposition \ref{lemma:jet}, we get 
$$[f-f^T]^{s,\b}_{\s,2\mu',\D'}\leq C \left( \frac{\mu'}{\mu}\right)^3 [f]^{s,\b}_{\s,\mu,\D}.$$
By hypothesis $S=S^T $ belongs to $ \Tc^{s,\b+}(\s',\mu,\D')$ and satisfies \eqref{hypo-S} which implies $[S]^{s,\b+}_{\s'',\mu,\D'}\leq \frac 1{2} (\mu-\mu')^2 ( \s''- \s')$ since $2\mu'<\mu$. Therefore 
 by Lemma \ref{composition} and since $2\mu'\leq 2 (\mu-\mu')$, $[f^+_2]^{s,\b}_{\s',\mu',\D'}$ is controlled by the second term in r.h.s. of \eqref{estim-f+}. \\
 It remains to control $[f^+_3]^{s,\b}_{\s',\mu',\D'}$. To begin with, $g_t:=(1-t)(\hat h+R)+tf^T$ is a jet function in $ \Tc^{s,\b}(\s',\mu,\D)$. Furthermore, defining  for $j=1,2$,
$$\s_j=\s'+j\frac{\s-\s'}{3} $$
 and  using \eqref{estim-R} we get (for $N$ large enough)
$$
[g_t]^{s,\b}_{\s_2,\mu,\D'}\leq C\left( 1+3^{n}\frac{e^{-(\s-\s')N/6}}{  (\s-\s')^{n}}\right)[f]^{s,\b}_{\s,\mu,\D}\leq C [f]^{s,\b}_{\s,\mu,\D}.
$$
On the other hand $ S\in \Tc^{s,\b+}(\s_2,\mu,\D')$ is also a jet function and satisfies 
$$
[S]^{s,\b+}_{\s_2,\mu,\D'}\leq \frac{CN^{1+d^{*}/\gamma}}{\ka^{2+d^{*}/2\beta} (\s-\s')^{n}}[f]^{s,\b}_{\s,\mu,\D}.
$$
Then using Lemma \ref{lemma-poisson} we have 
$$
[\{g_t,S\}]^{s,\b}_{\s_1,\mu,\D'}\leq C\frac{N^{1+d^{*}/\gamma}}{\ka^{2+d^{*}/2\beta}\mu^2 (\s-\s')^{n+1}}([f]^{s,\b}_{\s,\mu,\D})^2.$$
We conclude the proof by Proposition \ref{composition}.
\endproof
\subsection{Choice of parameters}
To prove the main theorem we  construct the transformation $\Phi$ as the composition of infinitely many transformations $S$ as in Theorem \ref{thm-homo}, i.e. for all $k\geq 1$ we construct iteratively $S_{k}$, $h_k$, $f_k$ following the general scheme \eqref{eq-homobis}--\eqref{f+} as follows~:
$$(h+f)\circ \Phi^1_{S_{1}}\circ\cdots\circ \Phi^1_{S_k}= h_{k}+f_{k}.$$
At each step $f_k\in \Tc^{s,\b}(\s_k,\mu_k,\D_{k})$ with $[f_k]^{s,\b}_{\s_k,\mu_k,\D_{k}}\leq \eps_k$ , $h_k=\lan\om_k,r\ran +\frac 12\lan \zeta,A_k\zeta\ran$ is on normal form, the Fourier series are truncated at order $N_k$ and the small divisors are controlled by $\ka_k$. In this section we specify the choice of all the parameters for $k\geq 1$. \\
First we fix
$$ \ka_0=\eps^{\frac 1{24(2+d^{*}/2\beta)}}.$$
 We define  $\eps_0=\eps$, $\s_0=\s$,  $\mu_0=\mu$ and for $j\geq 1$ we choose
 \begin{align*}
 \s_{j-1}-\s_j=&C_* \s_0 j^{-2},\\
 N_j=&2(\s_{j}-\s_{j+1})^{-1}\ln \eps_j^{-1},\\
 \ka_{j} =& \eps_{j}^{\frac{1}{24(2+d^{*}/2\beta)}}\\
 \mu_{j} =& \left( \frac{\eps_{j}}{(2M)^{j}\eps^{6/5}}\right)^{\frac13}\,,
 \end{align*}
where $M$ is the absolute constant defined in \eqref{estim-f+} and $(C_*)^{-1} =2\sum_{j\geq 1}\frac 1{j^2}$, and
\begin{equation}
 \eps_{j} = (\eps_{j-1})^{\frac54} \,.\label{eps+}
 \end{equation}
Observe that with this choice, $(\mu_{j})$ satisfies $2\mu_{j+1} \leq \mu_{j}$. Then the only unfixed parameter is $\eps=\eps_{0}$, that will be fixed next section. Nevertheless,  $\eps$ will be small enough to ensure the property
 $ \ka_{j} \leq \frac{\delta_{0}}{2}\,$
 that is necessary to apply Proposition \ref{thm-homo}. This is guaranteed if 
 \be\label{ed} \eps^{\frac 1{24(2+d^{*}/2\beta)}} \leq \frac{\delta_{0}}{2}\,.\ee

\subsection{Iterative lemma}

Let set $\D_0=\D$, $h_0=\lan\om_0(\r), r\ran+\frac 1 2 \langle \zeta, A_0\zeta\rangle$ and  $f_0= f$ in such a way $[f_0]^{s,\b}_{\s_0,\mu_0,\D_{0}}\leq \eps_0$. For $k\geq 0$ let us denote
$$\O_{k}= \O^{s}(\s_{k},\mu_{k}).$$
\begin{lemma}\label{iterative} For $\eps$ sufficiently small depending on $\mu_0$, $\s_0$, $n$,$s$, $\b$ and $|\om_{0}|_{C^{1}(\D)}$ we have the following:\\
For all $k\geq 1$ there exist $\D_k\subset\D_{k-1}$, $S_{k}\in \Tc^{s,\b+}(\s_k,\mu_k,\D_{k})$, $h_k=\lan\om_k,r\ran +\frac 12\lan \zeta,A_k\zeta\ran$  on normal form and 
$f_k\in \Tc^{s,\b}(\s_k,\mu_k,\D_{k})$ such that
\begin{itemize}
\item[(i)]  The mapping \be \label{Phik} \Phi_{k}(\cdot,\r)=\Phi^1_{S_{k}}\ :\ \O_k\to \O_{k-1}, \quad \r\in \D_{k},\ k=1,2,\cdots\ee
is an analytic symplectomorphism linking the hamiltonian at step $k-1$ and the hamiltonian at the step  k, i.e.
$$(h_{k-1}+f_{k-1})\circ \Phi_{k}= h_k+f_k.$$
\item[(ii)] we have the estimates
\begin{align*}
\meas(\D_{k-1}\setminus \D_{k})&\leq \eps_{k-1}^\a,\\
[h_k-h_{k-1}]^{s,\b}_{\s_k,\mu_k,\D_{k}}&\leq C\eps_{k-1},\\
[f_k]^{s,\b}_{\s_k,\mu_k,\D_{k}}&\leq \eps_k,\\
\| \Phi_k(x,\r)-x\|_s&\leq \eps^{4/5} .\eps_{k-1}^{1/4},\ \text{ for } x\in \O_k,\ \r\in\D_k.
\end{align*}
\end{itemize}
The  exponents $\a$ is a positive number depending on $n$, $d^{*}$, $\a_1$, $a_2$, $\ga$, $\b$. The constant $C$  also depends on $|\om_{0}|_{C^{1}(\D)}$.
 \end{lemma}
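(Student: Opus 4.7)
The plan is a standard KAM induction on $k$, applying Proposition \ref{thm-homo} followed by Lemma \ref{lem-f+} at each step. Starting from $h_0$, $f_0$, $\D_0$ as given, assume the conclusions hold up through step $k-1$. I would invoke Proposition \ref{thm-homo} with Hamiltonian $h_{k-1}$, perturbation $f_{k-1}$, truncation parameter $N_{k-1}$ and Melnikov threshold $\ka_{k-1}$, to obtain a subset $\D_k\subset \D_{k-1}$, a jet $S_k$, a normal-form correction $\hat h_{k-1}$, and a remainder $R_{k-1}$. Setting $h_k = h_{k-1}+\hat h_{k-1}$ and $\Phi_k=\Phi^1_{S_k}$, Proposition \ref{Summarize}(i) yields the symplectomorphism claimed in (i), and the new perturbation $f_k$ is given by formula \eqref{f+} with the bound \eqref{estim-f+} from Lemma \ref{lem-f+}.

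Before one can iterate, I need to verify that the hypotheses of Proposition \ref{thm-homo} persist at step $k$. Using \eqref{estim-B} at each previous step and telescoping $A_k-A_0=\sum_{j=1}^{k}(A_j-A_{j-1})$, one has $|\p_\r^j(A_k-A_0)|_{s,\b}\le C\sum_{j=0}^{k-1}\eps_j$, which is at most $2C\eps$ by the super-exponential decay induced by \eqref{eps+}; the same holds for $\om_k-\om_0$. Hence \eqref{Aom} persists provided $\eps$ is small enough in terms of $\de_0$. The hypothesis \eqref{hypo-S} of Lemma \ref{lem-f+} follows from \eqref{estim-S}: its right-hand side is of order $\ka_{k-1}^{-2-d^*/2\b}$ times a polynomial in $k$ and $\ln\eps_{k-1}^{-1}$ times $\eps_{k-1}$, i.e.\ $\eps_{k-1}^{1-1/24}$ up to such factors, while $\mu_{k-1}^2(\s_{k-1}-\s_k)$ is a positive power of $\eps_{k-1}$ (divided by $k^2$), so the inequality holds for $\eps$ small enough.

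The crux is to check, using \eqref{estim-f+}, that $[f_k]^{s,\b}_{\s_k,\mu_k,\D_k}\le\eps_k=\eps_{k-1}^{5/4}$. I would treat the three terms on the right-hand side of \eqref{estim-f+} separately. The choice $N_{k-1}=2(\s_{k-1}-\s_k)^{-1}\ln\eps_{k-1}^{-1}$ makes $e^{-(\s_{k-1}-\s_k)N_{k-1}/2}=\eps_{k-1}$, so the first term is of order $\eps_{k-1}^2$ up to polynomial factors in $k$, hence negligible compared to $\eps_k$. The choice $\mu_k^3=\eps_k/((2M)^k\eps^{6/5})$ gives $(\mu_k/\mu_{k-1})^3=\eps_{k-1}^{1/4}/(2M)$, so the second term contributes exactly $\eps_k/2$. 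The third term equals $M\,N_{k-1}^{1+d^*/\ga}\ka_{k-1}^{-2-d^*/2\b}\mu_{k-1}^{-2}(\s_{k-1}-\s_k)^{-n-1}\eps_{k-1}^2$; substituting $\ka_{k-1}^{-2-d^*/2\b}=\eps_{k-1}^{-1/24}$ and $\mu_{k-1}^{-2}\sim \eps_{k-1}^{-2/3}$ (with a $(2M)^{2(k-1)/3}$ factor), this is of order $\eps_{k-1}^{31/24}$ up to factors polynomial in $k$ and in $\ln\eps_{k-1}^{-1}$, which is $\le\eps_k/2=\eps_{k-1}^{5/4}/2$ for $\eps$ sufficiently small since $31/24>30/24=5/4$. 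The measure bound $\meas(\D_{k-1}\setminus\D_k)\le\eps_{k-1}^\a$ then follows directly from Proposition \ref{thm-homo}, as $N_{k-1}^{\exp}(\ka_{k-1}/\de_0)^{\exp'}$ is a positive power of $\eps_{k-1}$ up to logarithmic factors, and the bound on $\|\Phi_k-\mathrm{Id}\|_s$ comes from Proposition \ref{Summarize}(i) combined with \eqref{estim-S}.

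The main obstacle is the delicate balance of parameters: the radius $\mu_k$ must shrink so that $(\mu_k/\mu_{k-1})^3\sim\eps_{k-1}^{1/4}$ (forced by the $(\mu'/\mu)^3$ term coming from the jet truncation in Proposition \ref{lemma:jet}), while simultaneously $\mu_{k-1}^{-2}\ka_{k-1}^{-2-d^*/2\b}\eps_{k-1}^2$ must remain $\le\eps_{k-1}^{5/4}$ (forced by the Poisson-bracket loss in Lemma \ref{lemma-poisson}). This explains the somewhat technical definition $\mu_k^3=\eps_k/((2M)^k\eps^{6/5})$ rather than a simpler choice, and the particular exponent $1/24$ in the definition of $\ka_k$, which leaves room for both the $\mu_{k-1}^{-2}$ factor and the polynomial prefactors. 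All other estimates reduce to straightforward plug-ins of the parameter definitions.
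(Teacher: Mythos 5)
Your proposal follows essentially the same route as the paper's proof: induct on $k$, apply Proposition \ref{thm-homo} to build $S_k$, verify that \eqref{Aom} persists by telescoping $A_k-A_0$ and $\om_k-\om_0$ via \eqref{estim-B} and the super-exponential decay of $\eps_j$, apply Lemma \ref{lem-f+}, and balance the three terms of \eqref{estim-f+} against $\eps_k$ using the specific parameter choices. The explicit computations (second term gives exactly $\eps_k/2$; third term scales as $\eps_{k-1}^{31/24}$ after substituting $\ka_{k-1}^{-2-d^*/2\b}=\eps_{k-1}^{-1/24}$ and $\mu_{k-1}^{-2}\sim (2M)^{2(k-1)/3}\eps^{4/5}\eps_{k-1}^{-2/3}$) match the paper's.

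One imprecision worth flagging: after noting that $\mu_{k-1}^{-2}$ carries a $(2M)^{2(k-1)/3}$ factor, you describe the third term as ``of order $\eps_{k-1}^{31/24}$ up to factors polynomial in $k$ and in $\ln\eps_{k-1}^{-1}$'' — but $(2M)^{2(k-1)/3}$ is \emph{exponential} in $k$, not polynomial. The inequality does still hold uniformly in $k$, but the reason is that $\eps_{k-1}^{1/24}=\eps^{(5/4)^{k-1}/24}$ decays super-exponentially, which crushes the exponential-in-$k$ prefactor once $\eps$ is small enough; the paper isolates exactly this point by writing the third term as $C(j+1)^{\cdots}(2M)^{2j/3}\eps^{4/5}(\eps_j)^{1/24}\eps_{j+1}$ and choosing $\bar\eps_1$ so that the $\eps_{j+1}$-free part is $\le 1/4$ for all $j$. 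You should make the super-exponential-vs-exponential comparison explicit rather than absorbing the $(2M)^{2(k-1)/3}$ factor silently. A second minor omission: you do not address the estimate $[h_k-h_{k-1}]^{s,\b}_{\s_k,\mu_k,\D_k}\le C\eps_{k-1}$; this requires \eqref{estim-B} for the $B_k$ part and a Cauchy estimate for $\(\nabla_r f_{k-1}(\cdot,0;\r)\)$, as the paper notes. Neither gap reflects a flaw in the approach — both are routine — but they should be filled for completeness.
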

 \proof At step 1, $h_0=\lan\om_0(\r), r\ran+\frac 1 2 \langle \zeta, A_0\zeta\rangle$ and thus hypothesis \eqref{Aom} is trivially satisfied and we can apply Proposition \ref{thm-homo} to construct $S_1$, $R_0$, $B_0$ and $\D_1$ such that for $\r\in\D_1$
 $$\{h_0,S_0\}+f_0^T=\hat h_0+R_0.$$
Then we see that, using \eqref{estim-S} and defining  $\s_{1/2}=\frac{\s_0+\s_1}2$, we have
$$ [S_{1}]^{s,\b+}_{\s_{1/2},\mu_0,\D_{1}}\leq C \frac{\eps_0 N_0^{1+d^{*}/\gamma}}{\ka_0^{2+d^{*}/2\beta} (\s_0-\s_{1/2})^n}\leq\frac 1{16} \mu_0^2 ( \s_0- \s_{1})$$
for $\eps=\eps_0$ small enough in view of our choice of parameters.
 Therefore both Proposition \ref{Summarize} and Lemma \ref{f+} apply  and thus for any $\r\in\D_1$,
 $\Phi_{1}(\cdot,\r)=\Phi^1_{S_{1}} :  \O_1\to \O_0$ is an analytic symplectomorphism such that
 $$(h_{0}+f_{0})\circ \Phi_{1}= h_1+f_1$$ with  $h_1$, $f_1$, $\D_1$ and $\Phi_1$  satisfying the estimates $(ii)_{k=1}$ . In particular we have
 $$\|\Phi_1(x)-x\|_s\leq \frac C{\s_0\mu_0^2}[S_{1}]^{s,\b+}_{\s_{1/2},\mu_0,\D_{1}}\leq \frac {CN_0^{1+d^{*}/\gamma}}{\s_0^{n+1}\mu_0^2\ka_0^{2+d^{*}/2\beta}}\eps_0\leq \frac {C(\ln\eps_0)^{1+d^{*}/\gamma}}{\s_0^{n+2+d^{*}/\gamma}\mu_0^2}\eps_0^{23/24}\leq \frac 12 \eps_0^{11/12}     $$
 for $\eps_0$ small enough. 
 
 \medskip
 
 Now assume that we have completed the iteration up to step $j$.
We want to perform the step $j+1$. We first note that by construction (see Proposition \ref{thm-homo})
$$A_j =A_0+B_0+\cdots+B_{j-1}$$
and by \eqref{estim-B}
$$|A_j|_\b \leq \eps_0+\cdots +\eps_{j-1}\leq 2\eps_0\leq \frac 14 \delta_0$$ 
for $\eps_0$ small enough.
Similarly
$$\om_j=\om_0+\( \nabla_r f_0(\cdot,0;\r) \)+\cdots +\( \nabla_r f_{j-1}(\cdot,0;\r) \)$$
and thus $|\partial_r^j(\om_j-\om_0)|\leq \delta_0$ for $\eps_0$ small enough.\\ Therefore \eqref{Aom} is satisfied at rank $j$ and we can apply
Proposition \ref{thm-homo} in order to construct $S_{j+1}$, $B_j$, $R_j$ and $\D_j$.  

Then we construct  $f_{j+1}$ as in \eqref{f+}, i.e.
$$
f_{j+1}= R_j+(f_j-f_j^T)\circ \Phi^1_{S_{j+1}}+\int_0^1\{ (1-t)(\hat h_j+R_j)+tf_j^T,S_{j+1} \}\circ \Phi^t_{S_{j+1}}\ \dd t.
$$
To control $f_{j+1}$ we may apply Lemma \ref{lem-f+} since, defining $\s_{j+1/2}=\frac{\s_j+\s_{j+1}}2$,
$$
[S_{j+1}]^{s,\b+}_{\s_{j+1/2},\mu_j,\D_{j+1}}\leq  C \frac{\eps_jN_j^{1+d^{*}/\gamma}}{\ka_j^{2+d^{*}/2\beta} (\s_j-\s_{j+1})^n}  \leq \frac 1{8} \mu_j^2 ( \s_j- \s_{j+1}).$$
Therefore we can apply  Lemma \ref{lem-f+} and, using the preceding choice of parameters, we may bound all the terms of the r.h.s. of \eqref{estim-f+}. Let us start with the second term:
\begin{equation}\label{epsy2}
 M \left( \frac{\mu_{j+1}}{\mu_{j}}\right)^{3}\eps_{j} = \frac{1}{2} \eps_{j+1}\,.
 \end{equation}
The third term may be computed as
\begin{equation}\label{epsy3}
M\left( \frac{ 2(j+1)^{2}\ln(\eps_{j}^{-1})}{C_{*}\sigma_{0}}\right)^{1+d^{*}/\gamma}\left(\frac{(j+1)^{2}}{C_{*}\sigma_{0}}\right)^{n+1} \frac{\eps_{j}^{2- 1/24}}{\mu_{j}^{2}} = C (j+1)^{2n+3+2d^{*}/\gamma}(2M)^{2j/3}\eps^{4/5} (\eps_{j})^{1/24}\eps_{j+1} 
\end{equation}
and there exists $\bar{\eps}_{1}>0$ such that for $0 <\eps \leq \bar{\eps}_{1}$ we have for any $j \geq 1$ 
$$ C (j+1)^{2n+3+2d^{*}/\gamma}(2M)^{2j/3}(\eps)^{\frac 45+\frac{1}{24}.(\frac 54)^{j}} \leq \frac{1}{4}\,.$$
The first term gives
\begin{equation}\label{epsy1}
M \frac{\eps_{j}^{2}}{C_{*}\sigma_{0}}(j+1)^{2n} = M \frac{(j+1)^{2n}}{C_{*}\sigma_{0}}(\eps)^{\frac 34. (\frac 54)^{j}}\eps_{j+1}\,,
\end{equation}
and there exists $\bar{\eps}_{2}>0$ such that for $0 <\eps \leq \bar{\eps}_{2}$ we have for any $j \geq 1$ 
$$ M \frac{(j+1)^{2n}}{C_{*}\sigma_{0}}(\eps)^{\frac 34. (\frac 54)^{j}} \leq \frac{1}{4}\,.$$
Take $\eps_{0}\ \leq \bar{\eps} = \min(\bar{\eps}_{1}, \bar{\eps}_{2})>0$ and we conclude that
\be \label{recu}
[f_{j+1}]^{s,\b}_{\s_{j+1},\mu_{j+1},\D_{j+1}}\leq \eps_{j+1}.\ee
On the other hand by Proposition \ref{thm-homo} the domain $\D_{j+1}$ satisfies
$$\meas (\D_{j}\setminus\D_{j+1})\leq CN^{\exp}_{j}\Big(\frac{\ka_j}{\delta_0}\Big)^{\exp'}\leq \eps_{j}^\alpha $$
 for some $\a>0$ and for $\eps_0=\eps$ small enough. The estimate concerning $h_{k+1}-h_{k}$ follows from \eqref{estim-B} and \eqref{recu} for the infinite dimensional part, from \eqref{recu} for the control of $\( f_{j+1}(\cdot,0;\r) \)$ and a straightforward Cauchy estimate for the control of the mean value $\( \nabla_r f_{j+1}(\cdot,0;\r) \)$. Concerning the flow, we have for $j \geq 1$, 
 \begin{eqnarray*}
 \|\Phi_{j+1}(x)-x\|_s&\leq& \frac C{\s_j\mu_j^2}[S_{j+1}]^{s,\b+}_{\s_{j+1/2},\mu_j,\D_{j+1}}\leq \frac {CN_j^{1+d^{*}/\gamma}}{\s_j^{n+1}\mu_j^2\ka_j^{2+d^{*}/2\beta}}\eps_j\\&\leq& {C'(\ln\eps_j)^{1+d^{*}/\gamma}(2M)^{2j/3}j^{n+2+d^{*}/\gamma}}\eps^{4/5}\eps_j^{7/24}\leq \eps^{4/5}\frac 12 \eps_j^{1/4}\,, \end{eqnarray*}
  for $\eps$ small enough.

\endproof

\subsection{Transition to the limit and proof of Theorem \ref{main}}

Let $$\D'=\cap_{k\geq 0}\D_k.$$ In view of the iterative lemma, this is a Borel set satisfying
$$\meas(\D\setminus\D')\leq 2 \eps ^\a.$$
Let us set
$$Q_l=\O^{s}(\s/\ell,\mu/\ell), \ \mathcal Z_s=\T_\s^n\times\C^n\times Y_s$$
where $\ell\geq 2$, and recall that  $\|\cdot\|_s$ denotes the natural norm on $\C^n\times\C^n\times Y_s$. It defines the distance on $\mathcal Z_s$. We used the notations introduced in Lemma \ref{iterative}. By Proposition \ref{changevar} assertion 2 and since $\s_k>\s/2$, for each $\r\in \D'$ and $k \geq 2$, the map $\Phi_k$ extends to $Q_2$ and satisfies on $Q_2$ the same estimate as on $\O_k$:
\be \label{estim-Phik}\Phi_k:\ Q_2\to \mathcal Z_s,\quad \|\Phi_k-\mathrm{Id}\|_s\leq C\mu_k^{-2}(\s_{k-1}-\s_k)^{-1}\eps_k\leq \frac{k^{2}}{C_{*}\sigma_{0}}(2M)^{2k/3}\eps_k^{1/3}\eps^{4/5}.\ee
Now for $0\leq j\leq N$ let us denote $\Phi^j_N=\Phi_{j+1}\circ\cdots\circ \Phi_N$. Due to \eqref{Phik}, it maps $\O_N$ to $\O_{j}$. Again using  Proposition \ref{changevar}, this map extends analytically to a map $\Phi^j_N:\ Q_2\to \mathcal Z_s,$ and by \eqref{estim-Phik}, for $M>N$, $\|\Phi_N^j-\Phi_M^j\|_s\leq C \eps_N^{1/4}\eps^{4/5} $, i.e. $(\Phi_N^j)_N$ is a Cauchy sequence. Thus when $N\to \infty$ the maps  $\Phi^j_N$ converge to a limiting mapping $\Phi_\infty^j:\ Q_2\to \mathcal Z_s.$ Furthermore we have
\be \label{estim-Phiinf}\|\Phi_\infty^j-\mathrm{Id}\|_s\leq C\eps^{4/5}\sum_{k\geq j} \eps_k^{1/4}\leq C\eps^{4/5}\eps_j^{1/4},\ \forall j\geq 1.\ee
By the Cauchy estimate the linearized map satisfies
\be \label{linearized} \|D\Phi_\infty^j(x)-\mathrm{Id}\|_{\L(Y_s,Y_s)}\leq C\eps^{4/5}\eps_j^{1/4},\quad \forall x\in Q_3,\ \forall j\geq 1.\ee
By construction, the map $\Phi_N^0$ transforms the original hamiltonian $$H_0=\lan\om, r\ran+\frac 1 2 \langle \zeta, A_0\zeta\rangle+f$$ into $$H_N=\lan\om_N, r\ran+\frac 1 2 \langle \zeta, A_N(\om)\zeta\rangle+f_N.$$
Here $$\om_N=\om+\( \nabla_r f_0(\cdot,0;\r) \)+\cdots +\( \nabla_r f_{N-1}(\cdot,0;\r) \)$$ and $$A_N= A_0+B_0+\cdots+B_{N-1}$$ where 
$B_k$ is built from $\langle\nabla^2_{\zeta\zeta}f_k(\cdot,0)\rangle$ as in the proof of Proposition \ref{prop:homo4}.\\
Clearly, $\om_N\to \om'$ and $A_N\to A$ where the vector $\om'\equiv\om'(\r)$ and the operator $A\equiv A(\r)$ satisfy the assertions of Theorem \ref{main}.\\
Let us denote $\Phi=\Phi_\infty^0$, consider the limiting hamiltonian $H'=H_0\circ \Phi$ and write it as
$$ H'= \lan\om', r\ran+\frac 1 2 \langle \zeta, A(\r)\zeta\rangle+f'.$$
The function $f'$ is analytic in the domain $Q_2$. Since $H'=H_k\circ \Phi_\infty^k$,  we have
$$\nabla H'(x)=D\Phi_\infty^k(x)\cdot\nabla H_k(\Phi_\infty^k(x)).$$
As $[f_k]^{s,\b}_{\s_k,\mu_k,\D_k}\leq \eps_k$, we deduce 
 $$\nabla_r H_k(\Phi_\infty^k(\theta,0,0))=\om_k+O(\eps_k^{1/4})\quad \theta\in\T^n_{\frac \s 3}.$$ 
 Since the map $\Phi_\infty^k$ satisfies \eqref{linearized}, then $$\nabla_r H'(\theta,0,0)=\om'+O(\eps_k^{1/4}) \quad \text{ for all } k\geq 1\text{ and } \theta\in\T^n_{\frac \s 3} .$$ Hence, $\nabla_r H'(\theta,0,0)=\om'$ and thus
$$\nabla_r f'(\theta,0,0)\equiv 0\quad \text{ for } \theta\in\T^n_{\frac \s 3} .$$
Similar arguments lead to
$$\nabla_{\zeta_a} f'(\theta,0,0)\equiv 0 \text{ and }\nabla_{\zeta_a}\nabla_r f'(\theta,0,0)\equiv 0\quad \text{ for } \theta\in\T^n_{\frac \s 3} .$$
Now consider $\nabla_{\zeta_a}\nabla_{\zeta_b}H'(x)$. To study this matrix let us write it in the form \eqref{feo}, with $h=H_k$ and $x(1)=\Phi_\infty^k(x)$. Repeating the arguments used in the proof of Proposition \ref{composition} we get that
$$\nabla_{\zeta_a}\nabla_{\zeta_b}H'(\theta,0,0)=(A_k)_{ab}+O(\eps_k^{1/4}) \quad \text{ for all } k\geq 1\text{ and } \theta\in\T^n_{\frac \s 3} .$$
Therefore $\nabla_{\zeta_a}\nabla_{\zeta_b}H'(\theta,0,0)= A_{ab}$ i.e. 
$$\nabla_{\zeta_a}\nabla_{\zeta_b}f'(\theta,0,0)=0 \quad \text{ for } \theta\in\T^n_{\frac \s 3} .$$

This concludes the proof of Theorem \ref{main}.

\appendix
\section{Some calculus}
\begin{lemma}\label{lem-A1}
Let $j,k,\ell\in\N\setminus\{0\}$ then
\be\label{AA}\frac{\min(j,k)}{\min(j,k)+|j^2-k^2|}\frac{\min(k,\ell)}{\min(k,\ell)+|k^2-\ell^2|}\leq \frac{\min(j,\ell)}{\min(j,\ell)+|j^2-\ell^2|}.\ee
\end{lemma}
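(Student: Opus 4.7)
The plan is to clear denominators and reduce \eqref{AA} to a polynomial inequality, then to argue by a short case analysis on the position of $k$ relative to $j$ and $\ell$. Writing $m_1=\min(j,k)$, $m_2=\min(k,\ell)$, $m_3=\min(j,\ell)$ and $d_1=|j^2-k^2|$, $d_2=|k^2-\ell^2|$, $d_3=|j^2-\ell^2|$, cross-multiplication turns \eqref{AA} into the equivalent statement
\begin{equation*}
m_1 m_2\, d_3 \;\le\; m_3\bigl(m_2 d_1 + m_1 d_2 + d_1 d_2\bigr).
\end{equation*}

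Since \eqref{AA} is symmetric under the swap $j\leftrightarrow \ell$, I will assume $j\le \ell$, which leaves three subcases depending on whether $k$ lies below, between, or above $j$ and $\ell$. In the two \emph{extreme} subcases $k\le j\le \ell$ and $j\le\ell\le k$, the identity $d_3=d_2-d_1$ (resp.\ $d_3=d_1-d_2$) gives genuine cancellation, and the ratio $m_3/m_1\ge 1$ (resp.\ $m_3/m_2\ge 1$), coming from the fact that $\min(j,\ell)$ dominates whichever of the other two mins involves $k$, supplies enough slack: for example when $k\le j\le\ell$ one has $m_1=m_2=k$, $m_3=j\ge k$, and $m_1m_2d_3 = k^2(d_2-d_1)\le k^2 d_2 \le j k d_2 = m_3(m_1 d_2)$, which is weaker than the required bound. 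The subcase $j\le\ell\le k$ is essentially identical after interchanging the roles.

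The delicate case is the \emph{middle} one, $j\le k\le \ell$, where $d_3=d_1+d_2$ admits no cancellation. Here $m_1=m_3=j$ and $m_2=k$, and after substituting and dividing through by the factors that appear on both sides one is left with the inequality $k-j \le (k-j)(k+j)$, i.e.\ with $k+j\ge 1$ (after discarding the strictly positive $d_1d_2/k$ term from the right). This is the only place where the hypothesis $j,k,\ell\in\mathbb{N}\setminus\{0\}$ is used, and it is essential: for real $j,k,\ell\in (0,1)$ the inequality \eqref{AA} genuinely fails, as one sees from $j=\ell=0{.}1,\ k=0{.}2$. I therefore expect the main (and only) obstacle to be identifying precisely where integrality enters; once the case split above is set up, everything else is one or two lines of algebra.
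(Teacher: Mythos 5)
Your proof is correct and follows essentially the same route as the paper: reduce (WLOG $j\le\ell$) to a case analysis on where $k$ sits relative to $j$ and $\ell$, with the middle case $j\le k\le\ell$ being the only one where integrality enters. The paper does not cross-multiply; for the two extreme cases it works directly with the fractions, using that one of the left-hand factors is $\le 1$ and that the other is dominated by the right-hand side by monotonicity of $x\mapsto x/(x+c)$ and $c\mapsto x/(x+c)$. For the middle case the paper collapses the left-hand side into $\frac{j}{j+|j^2-k^2|+|k^2-\ell^2|}$ in one step without remarking that this step itself needs $j+k\ge 1$; your write-up has the merit of making the role of integrality explicit. Your cross-multiplied form $m_1m_2d_3\le m_3(m_2d_1+m_1d_2+d_1d_2)$ and its case-by-case verification are sound, though in the subcase $j\le\ell\le k$ the relevant dominance is $m_3\ge m_1$ (not $m_3/m_2\ge 1$, which is in fact $\le 1$ there): one uses $d_3\le d_1$ and $m_1=m_3$ to get $m_1m_2d_3\le m_3m_2d_1$, which is already a term on the right.

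One factual slip worth fixing: the claimed counterexample $j=\ell=0.1$, $k=0.2$ is not one, since $j=\ell$ makes $d_3=0$, so the right-hand side equals $1$ and the inequality holds trivially; moreover this is an extreme case, not the middle one. A genuine failure for small reals occurs in the middle case, e.g.\ $j=0.1$, $k=0.2$, $\ell=0.3$, for which the left-hand side equals $\tfrac{8}{13}\approx 0.615$ while the right-hand side equals $\tfrac{5}{9}\approx 0.556$.
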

\proof
Without lost of generality we can assume $j\leq\ell$.\\
If $k\leq j$ then $|k^2-\ell^2|\geq |j^2-\ell^2|$  and thus
\begin{align*}\frac{\min(j,\ell)}{\min(j,\ell)+|j^2-\ell^2|}&=\frac{j}{j+|j^2-\ell^2|}\geq \frac{j}{j+|k^2-\ell^2|}\\ &\geq \frac{k}{k+|k^2-\ell^2|}=\frac{\min(k,\ell)}{\min(k,\ell)+|k^2-\ell^2|}\end{align*}
which leads to \eqref{AA}. The case $\ell\leq k$ is similar.\\
In the case $j\leq k\leq \ell$ we have 
\begin{align*}\frac{\min(j,k)}{\min(j,k)+|j^2-k^2|}&\frac{\min(k,\ell)}{\min(k,\ell)+|k^2-\ell^2|}\leq \frac{j}{j+|j^2-k^2|+|k^2-\ell^2|}\\ 
&\leq \frac{j}{j+|j^2-\ell^2|}=\frac{\min(j,\ell)}{\min(j,\ell)+|j^2-\ell^2|}.\end{align*}
\endproof

\begin{lemma}\label{lem-A2}
Let $j\in\N$ then
$$\sum_{k\in \N}\frac{1}{k^\b(1+|k-j|)}\leq C$$
for a constant $C$ depending only on $\b>0$.
\end{lemma}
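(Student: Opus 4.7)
The plan is to split the sum according to the relative sizes of $k$ and $j$ into three dyadic zones: the \emph{far zone} $1\le k\le j/2$, the \emph{resonant zone} $j/2<k<2j$, and the \emph{tail zone} $k\ge 2j$, handling the degenerate case of bounded $j$ separately at the end. In each zone exactly one mechanism supplies summability, so the bookkeeping is straightforward once the zones are fixed.

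First I will dispatch the two outer zones, where the distance factor $1+|k-j|$ provides effective decay. In the far zone one has $1+|k-j|\ge j/2$, reducing the contribution to $\frac{2}{j}\sum_{k\le j/2} k^{-\b}$; this inner sum equals $\zeta(\b)$ when $\b>1$, is $O(\log j)$ when $\b=1$, and is $O(j^{1-\b})$ when $0<\b<1$, so in every case division by $j$ produces a bound that is uniform in $j$. In the tail zone one has $|k-j|\ge k/2$, so each summand is at most $2/k^{1+\b}$ and the sum converges for every $\b>0$ to a constant depending only on $\b$.

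The main obstacle is the resonant zone $j/2<k<2j$, since neither factor in the denominator is individually small. The key observation is that $k^\b\ge (j/2)^\b$ throughout this range, so after the change of variable $m=k-j$ the contribution is bounded by
$$\frac{2^\b}{j^\b}\sum_{|m|<j}\frac{1}{1+|m|}\le C\,\frac{\log(1+j)}{j^\b}.$$
This is the step where the whole estimate could fail, but the one-variable function $j\mapsto\log(1+j)/j^\b$ is bounded on $[1,\infty)$ for every $\b>0$ (its maximum is attained near $j\sim e^{1/\b}$ and is of order $1/(\b e)$), so the resonant zone also contributes only $O(1)$.

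It remains to absorb the degenerate case $j\le 2$, which is handled directly from the convergence of $\sum_{k\ge1}k^{-\b}(1+k)^{-1}$ for $\b>0$. Summing the three zonal contributions produces a constant $C$ depending only on $\b$, as claimed. Throughout, the argument uses only the assumption $\b>0$ and no regularity of $j$, which is all that Lemma \ref{product} requires in its repeated applications.
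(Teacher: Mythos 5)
Your proof is correct, and it takes a genuinely different route from the paper's. The paper establishes the bound in one line via Young's inequality: it writes the sum as the convolution $a\star b(j)$ with $a_k = k^{-\beta}\mathbf 1_{k\ge1}$ and $b_k = (1+|k|)^{-1}$, notes $a\in\ell^q$ for $q>1/\beta$ and $b\in\ell^p$ for $p>1$, and chooses $q=2/\beta$, $p=2/(2-\beta)$ so that $\tfrac1p+\tfrac1q=1$, hence $a\star b\in\ell^\infty$. You instead do a direct three-zone dyadic decomposition (far, resonant, tail), bounding each zone separately. Both arguments are sound. The paper's is shorter and slicker but, as written, requires $0<\beta<2$ (otherwise $p=2/(2-\beta)$ is not a legitimate exponent); since Lemma \ref{product} invokes the estimate only for $0<\beta\le1$ that is harmless. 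Your decomposition is longer but elementary and visibly works for every $\beta>0$, and it exposes where the constant's $\beta$-dependence comes from (the maximum of $\log(1+j)/j^\beta$ in the resonant zone). One small thing worth tightening in your write-up: in the degenerate case $j\le2$ you compare to $\sum k^{-\beta}(1+k)^{-1}$, but the actual denominator is $1+|k-j|$; you should note explicitly that $1+|k-j|\ge\tfrac13(1+k)$ for $k\ge1$, $j\le2$, so the comparison is legitimate.
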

\proof
We note that $$\sum_{k\in \N}\frac{1}{k^\b(1+|k-j|)}=a\star b(j)$$ where  $a_k=\frac 1k$ for $k\geq1$, $a_k=0$ for $k\leq 0$ and $b_k=\frac 1{1+|k|}$, $k\in\Z$.
We have that $b\in\ell^p$ for any $1<p\leq+\infty$ and that $a\in\ell^q$ for any $\frac1\b<q\leq+\infty$. Thus by Young inequality $a\star b\in\ell_r$  for $r$ such that $\frac 1p+\frac 1q=1+\frac 1r$. In particular choosing $q=\frac 2\b$ and $p=\frac 2{2-\b}$ we conclude that  $a\star b\in\ell_\infty$.
\endproof
The following Lemma is a variant of Proposition 2.2.4 in \cite{DS1}.

 \begin{lemma}\label{delort} Let $A\in \msb$ and let $B(k)$ defined by
 \be\label{eqdelort}{B(k)}_{j}^l=\frac i{\langle k,\om \rangle \ +\eps\mu_j-\mu_l}{A}_j^l,\quad j\in[a],\ \ell\in[b]\ee
 where $\eps=\pm1$, $(\mu_a)_{a\in\L}$ is a sequence of real numbers satisfying
 \be \label{hypdelort0}|\mu_a-\lambda_a|\leq \min \left(\frac{C_{\mu}}{w_{a}^{\delta}}, \frac{c_{0}}{4} \right),\quad \text{ for all }a\in\L\ee
for a given $C_{\mu}>0$ and $\delta>0$,  and such that for all $a,b\in\L$ and all $|k|\leq N$
 \be\label{hypdelort} | \langle k,\om(\r)\rangle \ +\eps\mu_a-\mu_b|\geq {\ka}(1+|w_a-w_b|).\ee
 Then $B\in\msb^+$ and there exists a constant $C>0$ depending only on $C_{\mu}$, $\A$ and $\delta$  such that
 $$|B(k)|_{s,\b+}\leq C\frac{|A|_{s,\b}N^{\frac {d^{*}}{2\gamma}}}{\ka^{1+\frac{d^{*}}{2\delta}}}\quad \text{for all } |k|\leq N.$$
 \end{lemma}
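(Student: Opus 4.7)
The plan is to bound $\|B(k)_{[a]}^{[b]}\|$ for each pair of blocks $(a,b)\in\hat\L\times\hat\L$, and recover $|B(k)|_{s,\b+}$ by reinstating the weight that defines the norm. The starting point is that hypothesis \eqref{hypdelort}, applied to the individual indices $j\in[a]$ and $\ell\in[b]$ (which satisfy $w_j=w_a$, $w_\ell=w_b$), already yields $|\langle k,\om\rangle+\eps\mu_j-\mu_\ell|\ge\ka(1+|w_a-w_b|)$ uniformly on the whole product block. I would then split the analysis at a threshold $T\sim N^{1/\ga}+(C_\mu/\ka)^{1/\de}$ on $w(a,b):=\min(w_a,w_b)$.

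For small blocks ($w(a,b)\le T$) I use a Hilbert--Schmidt bound:
\[
\|B(k)_{[a]}^{[b]}\|\le\|B(k)_{[a]}^{[b]}\|_{\mathrm{HS}}\le \frac{\|A_{[a]}^{[b]}\|_{\mathrm{HS}}}{\ka(1+|w_a-w_b|)}\le \frac{\sqrt{\min(d_a,d_b)}}{\ka(1+|w_a-w_b|)}\|A_{[a]}^{[b]}\|.
\]
Applying \eqref{block} one gets $\sqrt{\min(d_a,d_b)}\le Cw(a,b)^{d^*/2}\le CT^{d^*/2}\le C\,N^{d^*/(2\ga)}\ka^{-d^*/(2\de)}$, which, after restoring the geometric/Sobolev weight, gives exactly the advertised bound $(1+|w_a-w_b|)\|B(k)_{[a]}^{[b]}\|\le C\,N^{d^*/(2\ga)}\ka^{-1-d^*/(2\de)}\|A_{[a]}^{[b]}\|$.

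For large blocks ($w(a,b)>T$, $T$ chosen so that the relevant perturbation is at most $\ka/4$), I exploit that $\mu_j$ lies within $C_\mu/w_a^\de$ of the single scalar value $\lambda_{[a]}$ that $A_0$ takes on $[a]$ (using that $A_0$ is a multiple of the identity on each cluster, by the normal-form condition), and analogously on $[b]$. Writing $\nu_{jl}=\nu_0+\tilde\eta_{jl}$ with $\nu_0:=\langle k,\om\rangle+\eps\lambda_{[a]}-\lambda_{[b]}$, the inequality $|\tilde\eta_{jl}|\le 2C_\mu/w(a,b)^\de\le\ka/4$ yields $|\nu_0|\ge\ka(1+|w_a-w_b|)/2$ and $|\tilde\eta_{jl}/\nu_0|\le 1/2$, so that the Neumann series
\[
\frac{1}{\nu_{jl}}=\frac{1}{\nu_0}\sum_{n\ge 0}\Bigl(-\frac{\tilde\eta_{jl}}{\nu_0}\Bigr)^n
\]
converges geometrically. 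The decisive observation is that $\tilde\eta_{jl}=\eps(\mu_j-\lambda_{[a]})-(\mu_\ell-\lambda_{[b]})$ is \emph{separable} in $j,\ell$: its $n$-th Hadamard power acting on $A_{[a]}^{[b]}$ equals a binomial sum of sandwich products $D_\alpha^p\,A_{[a]}^{[b]}\,D_\beta^{n-p}$ with $D_\alpha=\diag(\mu_j-\lambda_{[a]})_{j\in[a]}$, $D_\beta=\diag(\mu_\ell-\lambda_{[b]})_{\ell\in[b]}$. Since $\|D_\alpha\|+\|D_\beta\|\le 2C_\mu/w(a,b)^\de$, summing the series produces $(1+|w_a-w_b|)\|B(k)_{[a]}^{[b]}\|\le C\ka^{-1}\|A_{[a]}^{[b]}\|$, strictly stronger than what is required.

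The main obstacle is precisely the large-block regime: a bare Hilbert--Schmidt estimate costs an uncontrolled $w(a,b)^{d^*/2}$ past the threshold $T$. The trick of collapsing the Hadamard divisor onto a single scalar pivot $\nu_0$ via a Neumann expansion, and converting each Hadamard power of $\tilde\eta$ into an ordinary three-term matrix product thanks to its separability, is what removes this dimensional loss and replaces it by a mere $1/\ka$. Combining both cases and multiplying by the weight $(w_aw_b)^\b\bigl((w(a,b)+|w_a^2-w_b^2|)/w(a,b)\bigr)^{s/2}$ then yields the announced bound.
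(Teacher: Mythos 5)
Your proof is correct, and it takes a genuinely leaner route than the paper's. The paper splits the pairs of blocks into \emph{three} regimes based on the ratio $\max(w_a,w_b)/\min(w_a,w_b)$ and on the absolute size $\max(w_a,w_b)$: (Step 1) far off-diagonal blocks, where the divisor is handled by diagonal dominance of $D'_{[a]}$ over $D_{[b]}$ without ever invoking \eqref{hypdelort}; (Step 2) near-diagonal blocks with large indices, where a contraction argument around the scalar pivot $\nu_0$ is used; and (Step 3) the remaining finitely many small blocks, where an entrywise estimate is paid for by the block dimension. You instead observe that hypothesis \eqref{hypdelort} already gives a lower bound on the divisor uniformly over \emph{all} pairs — including the far off-diagonal ones — so the diagonal-dominance case never needs to be treated separately, and a single dichotomy on $\min(w_a,w_b)$ suffices. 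Your large-$\min$ branch is exactly the paper's Step 2 made explicit: the contraction $\bigl(\mathrm{Id}-(\mathscr{L}^2)^{-1}\mathscr{K}\bigr)^{-1}$ the paper inverts unfolds into precisely the Neumann series you write, and your separability observation (that Hadamard powers of $\tilde\eta_{jl}=\alpha_j+\beta_\ell$ expand into binomial sums of sandwich products $D_\alpha^p A D_\beta^{n-p}$) is what lets the operator norm, rather than the entrywise sup, propagate through the expansion — the same algebra that makes $\mathscr{K}$ a bounded operator in the operator norm. Your small-$\min$ branch is a sharpened Step 3: the Hilbert–Schmidt route gives $\sqrt{\min(d_a,d_b)}$ where the paper settles for $(C_b\max(w_a,w_b))^{d^*/2}$; both fit under the claimed bound. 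One small remark: your threshold $T\sim N^{1/\gamma}+(C_\mu/\kappa)^{1/\delta}$ is larger than necessary — only $T\gtrsim(C_\mu/\kappa)^{1/\delta}$ is needed for the Neumann convergence, and with that choice you would even obtain a bound without the $N^{d^*/(2\gamma)}$ factor; you appear to have padded $T$ to match the stated estimate, which is harmless. Finally, both your proof and the paper's implicitly treat $\lambda_a$ as constant on each cluster $[a]$ (when forming the scalar pivot $\lambda_{[a]}$); this is true in the Klein--Gordon application and is what the lemma is designed for, so it is not a defect peculiar to your argument.
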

\proof

We first remark that the claimed property only concerns the operator norms of the blocks $B_{[a]}^{[b]}$, which can be computed separately. Let $k_{1}$ and $k_{2}$ be positive integers that will be fixed later.  We define the following decomposition in $\msb$, according to the weights $w_{a}$ and $w_{b}$ :
$$ \msb = \Upsilon_{s,\beta}^{1}(k_{1},k_{2}) \oplus \Upsilon_{s,\beta}^{2}(k_{1},k_{2}) \oplus \Upsilon_{s,\beta}^{3}(k_{1},k_{2})\,,$$
where
\begin{align*}
\Upsilon_{s,\beta}^{1}(k_{1},k_{2})  &= \big\{ M \in \msb, M_{[a]}^{[b]} = 0 \;\; \mbox{if} \;\; \max(w_{a},w_{b}) \leq k_{1} \min(w_{a},w_{b}) \big\}\,,\\
\Upsilon_{s,\beta}^{2}(k_{1},k_{2})  &= \big\{ M \in \msb, M_{[a]}^{[b]} = 0 \;\; \mbox{if} \;\; \max(w_{a},w_{b}) > k_{1} \min(w_{a},w_{b}) \; \mbox{or} \;\max(w_{a},w_{b})\leq k_{2} \big\}\,,\\
\Upsilon_{s,\beta}^{3}(k_{1},k_{2})  &= \big\{ M \in \msb, M_{[a]}^{[b]} = 0 \;\; \mbox{if} \;\; \max(w_{a},w_{b}) > k_{1} \min(w_{a},w_{b}) \; \mbox{or} \;\max(w_{a},w_{b})> k_{2} \big\}\,,
\end{align*}
and we prove the desired estimates according to this decomposition. Since we estimate the operator norm of $B_{[a]}^{[b]}$, we need to rewrite the definition \eqref{eqdelort} in a operator way :  denoting by $D_{[a]}$ the diagonal (square) matrix with entries $\mu_{j}$, for $j \in [a]$ and $D'_{[a]}$ the diagonal (square) matrix with entries $\langle k,\om(\r)\rangle  +\eps\mu_j$, for $j \in [a]$, equation \eqref{eqdelort} reads
\be \label{eqdelort2} D'_{[a]} B_{[a]}^{[b]} -  B_{[a]}^{[b]} D_{[b]} =  i A_{[a]}^{[b]}\,.\ee

\noindent {\bf Step 1} : suppose  $A \in \msb \cap \Upsilon_{s,\beta}^{1}(k_{1},k_{2})$. The only nonzero blocks $A_{[a]}^{[b]}$ correspond to weights $w_{a}$ and $w_{b}$ such that 
$$ \max(w_{a},w_{b}) > k_{1} \min(w_{a},w_{b})\,$$
take for instance $w_{a} > k_{1} w_{b}$. Then $|w_{a}-w_{b}| \geq w_{a} (1 - \frac {1}{k_{1}})$, $w_a\geq k_1$ and 
\be
| \langle k,\om(\r)\rangle \ +\eps\mu_a| \geq  c_{0}\left(w_{a}^{\gamma} - \frac 14\right) - nN\max(\omega_{k}(\rho)) \geq \frac{c_{0}}{2} w_{a}^{\gamma}\,,\ee
for 
\be\label{k1} k_{1}\geq\left(4nNc_{0}^{-1}\max(\omega_{k}(\rho))\right)^{1/\gamma}:=C_{1} ,\ee
 that proves that $D'_{[a]}$ is invertible and gives an upper bound for the operator norm of its inverse. Then \eqref{eqdelort2} is equivalent to
\be B_{[a]}^{[b]} - {D'_{[a]}}^{-1} B_{[a]}^{[b]} D_{[b]} = i {D'_{[a]}}^{-1} A_{[a]}^{[b]}\,.\ee
Next consider the operator $\mathscr{L}^{1}_{[a]\times[b]}$ acting on matrices of size $[a]\times [b]$ such that
\be \mathscr{L}^{1}_{[a]\times[b]} \left( B_{[a]}^{[b]} \right) := {D'_{[a]}}^{-1} B_{[a]}^{[b]} D_{[b]}\,.\ee
We have 
\be \| \mathscr{L}^{1}_{[a]\times[b]} \left( B_{[a]}^{[b]} \right) \|  \leq  \frac{4 w_{b}}{w_{a }} \|B_{[a]}^{[b]}\| \leq  \frac{4}{k_{1}}  \|B_{[a]}^{[b]}\| \,,\ee
hence, in operator norm, $ \| \mathscr{L}^{1}_{[a]\times[b]} \| \leq \frac12$ if $k_{1}\geq 8$. Then the operator $\mathrm{Id} - \mathscr{L}^{1}_{[a]\times[b]}$ is invertible and
\begin{eqnarray*}
\| B_{[a]}^{[b]} \| &\leq& \| \left( \mathrm{Id} -  \mathscr{L}_{[a]\times[b]} \right)^{-1} \| \|  i {D'_{[a]}}^{-1} A_{[a]}^{[b]}\| \\
& \leq &  \frac{4}{w_{a}}  \|  A_{[a]}^{[b]}\| \\
& \leq &  \frac{4k_1}{k_1- 1} \frac{1}{1 + |w_{a}-w_{b}|}  \|  A_{[a]}^{[b]}\| 
\end{eqnarray*}
We have obtained that, for $k_{1}\geq\max(C_{1},8)$, $B \in \msb^{+}$ and
\be \label{delort1} |B|_{s,\b+} \leq 8|A|_{s,\b}\ee

\noindent {\bf Step 2} :  suppose  $A \in \msb \cap \Upsilon_{s,\beta}^{2}(k_{1},k_{2})$. The only nonzero blocks $A_{[a]}^{[b]}$ correspond to weights $w_{a}$ and $w_{b}$ such that 
$$ \max(w_{a},w_{b}) \leq k_{1} \min(w_{a},w_{b}) \; \mbox{and} \; \max(w_{a},w_{b})> k_{2} \,.$$
Notice that these two conditions imply that $$\min(w_a,w_b)\geq \frac{k_2}{k_1}.$$
We define the  square matrix $\tilde{D}_{[a]}=   \lambda_{a} \mathbf{1}_{[a]}$, where $\mathbf{1}_{[a]}$ is the identity matrix. Then 
\be \| D_{[a]} - \tilde{D}_{[a]} \| \leq \frac {C_{\mu}}{w_{a}^{\delta}}\,,\ee
and equation \eqref{eqdelort} may be rewritten as
\be \label{amel2} \mathscr{L}^{2}_{[a]\times[b]} \left( B_{[a]}^{[b]} \right) - \eps (\tilde{D}_{[a]} -  D_{[a]})B_{[a]}^{[b]}  + B_{[a]}^{[b]} (\tilde{D}_{[b]} -  D_{[b]})= A_{[a]}^{[b]}  \,,\ee
where we denote by $\mathscr{L}^{2}_{[a]\times[b]}$  the operator acting on matrices of size $[a]\times [b]$ such that
\be \mathscr{L}^{2}_{[a]\times[b]} \left( B_{[a]}^{[b]} \right) :=  \left( \langle k,\om(\r)\rangle + \eps \lambda_{a} - \lambda_{b}\right) B_{[a]}^{[b]} \,.
\ee
This dilation is invertible and \eqref{hypdelort} then gives, in operator norm,
\be \| \left(  \mathscr{L}^{2}_{[a]\times[b]} \right)^{-1}\| \leq  \frac{1}{\kappa (1 + |w_{a}-w_{b}|)}\,.\ee
This allows to write \eqref{amel2} as
\be \label{amel3} B_{[a]}^{[b]} - \left(  \mathscr{L}^{2}_{[a]\times[b]} \right)^{-1} \mathscr{K}_{[a]\times[b]}\left(B_{[a]}^{[b]} \right) =  \left(  \mathscr{L}^{2}_{[a]\times[b]} \right)^{-1}\left(A_{[a]}^{[b]}\right)\,,\ee
where $ \mathscr{K}_{[a]\times[b]}\left(B_{[a]}^{[b]} \right) = \eps (\tilde{D}_{[a]} -  D_{[a]})B_{[a]}^{[b]}  - B_{[a]}^{[b]} (\tilde{D}_{[b]} -  D_{[b]}) $. We have, thanks to \eqref{hypdelort0}, in operator norm,
\be \|\mathscr{K}_{[a]\times[b]} \| \leq C_{\mu}\left( \frac 1{w_{a}^{\delta}}+\frac 1{w_{b}^{\delta}}\right) \leq C_{\mu}\big(\frac{k_1}{k_{2}}\big)^{\delta}\,.\ee
Then for 
\be \label{k2}k_{2}\geq k_1(\frac{2C_{\mu}}{\kappa})^{1/\delta},\ee
 the operator $\mathrm{Id} - (\mathscr{L}^{2}_{[a]\times[b]})^{-1}\mathscr{K}_{[a]\times[b]}$ is invertible and from \eqref{amel3} we get 
\begin{eqnarray*} \| B_{[a]}^{[b]} \|  &=& \|\left( \mathrm{Id} - (\mathscr{L}^{2}_{[a]\times[b]})^{-1}\mathscr{K}_{[a]\times[b]}\right)^{-1} \| \| \left(  \mathscr{L}^{2}_{[a]\times[b]} \right)^{-1}\left(A_{[a]}^{[b]}\right)\| \\
& \leq & 2  \| \left(  \mathscr{L}^{2}_{[a]\times[b]} \right)^{-1}\left(A_{[a]}^{[b]}\right)\|\,,
\end{eqnarray*}
Hence in this case 
\be \label{delort2} |B|_{s,\b+} \leq \frac{2}{\kappa}|A|_{s,\b}\ee

\noindent {\bf Step 3} : suppose  $A \in \msb \cap \Upsilon_{s,\beta}^{3}(k_{1},k_{2})$. The only nonzero blocks $A_{[a]}^{[b]}$ correspond to weights $w_{a}$ and $w_{b}$ such that 
$$ \max(w_{a},w_{b}) \leq k_{1} \min(w_{a},w_{b}) \; \mbox{and} \; \max(w_{a},w_{b})\leq k_{2} \,,$$
hence there are only finitely many such blocks. In this case, for any $j \in[a]$ and $l \in [b]$ we have
\be |B_{j}^{l}| = \left| \frac i{\langle k,\om(\r)\rangle \ +\eps\mu_j-\mu_l}\right| |{A}_j^l | \leq \frac{1}{\kappa(1+|w_{a}-w_{b}|) } |{A}_j^l |\ee
A majoration of the coefficients gives a poor majoration of the operator norm of a matrix, but it is sufficient here since the number of nonzero blocks (and their size, see \eqref{block}) is finite~:
 \be \| B_{[a]}^{[b]} \| \leq( \frac{(C_{b}\max(w_{a},w_{b}))^{d^{*}/2}}{\kappa(1+|w_{a}-w_{b}|) }\|A_{[a]}^{[b]} \|\,,
 \ee
 hence  $B \in \msb^{+}$ and
 \be \label{delort3} |B|_{s,\b+} \leq \frac{(C_{b}k_{2})^{d^{*}/2}}{\kappa} |A|_{s,\b}\,.\ee
Collecting \eqref{delort1}, \eqref{delort2} and \eqref{delort3} and taking into account \eqref{k1}, \eqref{k2} leads to the  result.
\endproof

\end{document}